\renewcommand{\algorithmiccomment}[1]{\bgroup\hfill//~#1\egroup}
\newcommand{\nn}{\nonumber}
\newcommand{\DIV}{\mbox{div}}
\newcommand{\dx}{\mathrm{dx}}
\newcommand{\ds}{\mathrm{ds}}
\newcommand{\I}{\mathcal{I}}
\DeclareMathOperator*{\argmin}{arg\,min}
\newcommand{\ve}{\varepsilon}
\newcommand{\vphi}{\varphi}
\newcommand{\Lphi}{L^\varphi}
\newcommand{\Wphi}{W^{1, \varphi}}
\newcommand{\T}{\mathcal{T}}
\newcommand{\Th}{\T_h}
\newcommand{\Tc}{\T_H}
\newcommand{\Vc}{\mathcal{V}_H}
\newcommand{\Vcn}{\mathcal{V}_H^{(n)}}
\newcommand{\Vh}{\mathcal{V}_h}
\newcommand{\Vf}{\mathcal{V}_f}
\newcommand{\Vi}{\mathcal{V}_i}
\newcommand{\V}[2]{\mathcal{V}^{(#1)}_{#2}}
\def\Nc{N_c}
\newcommand{\R}{\mathbb{R}}
\newcommand{\bA}{A}
\newcommand{\bB}{\mathbf{B}}
\newcommand{\bV}{\mathbf{V}}
\newcommand{\bxi}{\bm{\xi}}
\newcommand{\bzeta}{\bm{\zeta}}
\newcommand{\bPsi}{\bm{\Psi}}
\newcommand{\J}{\mathcal{J}}
\newcommand{\sd}{\mathrm{G}}
\newcommand{\nt}{\mathrm{N}}
\newcommand{\un}{u^{(n)}}
\newcommand{\wn}{w^{(n)}}
\newcommand{\wnH}{w^{(n)}_H}
\newcommand{\phin}[2]{\phi^{(#1)}_{#2}}
\newcommand{\reg}{\mathrm{reg}}
\newcommand{\err}{\mathrm{err}}
\newcommand{\Ar}{A}
\def\<{\langle}
\def\>{\rangle}
\newcommand{\leqs}{\leqslant}
\newcommand{\geqs}{\geqslant}
\newcommand{\bdist}[2]{\mathcal{D}_\J(#1, #2)}
\newcommand{\cj}{c_{\J}}
\newcommand{\Cj}{C_{\J}}
\newcommand{\Dphi}{\Delta_2({\vphi, \vphi^*})}
\newlength{\fixboxwidth}
\title{Iterated numerical homogenization for multi-scale elliptic equations with monotone nonlinearity}
\author{
	Xinliang Liu
	\footnote{{\tt liuxinliang@sjtu.edu.cn}
		Institute of Natural Sciences, School of Mathematical Sciences,
		and MOE-LSC, Shanghai Jiao Tong University.
	},
	\\
	Eric Chung
	\footnote{{\tt tschung@math.cuhk.edu.hk}
		Department of Mathematics, The Chinese University of Hong Kong, Shatin, Hong Kong.
	},
	and Lei Zhang
	\footnote{{\tt lzhang2012@sjtu.edu.cn}
		Institute of Natural Sciences, School of Mathematical Sciences,
		and MOE-LSC, Shanghai Jiao Tong University.
	}
}
\begin{document}

 \maketitle
\begin{abstract}
Nonlinear multi-scale problems are ubiquitous in materials science and biology. Complicated interactions between nonlinearities and (nonseparable) multiple scales pose a major challenge for analysis and simulation. In this paper, we study the numerical homogenization for multi-scale elliptic PDEs with monotone nonlinearity, in particular the Leray-Lions problem (a prototypical example is the p-Laplacian equation), where the nonlinearity cannot be parameterized with low dimensional parameters, and the linearization error is non-negligible. We develop the iterated numerical homogenization scheme by combining numerical homogenization methods for linear equations, and the so-called "quasi-norm" based iterative approach for monotone nonlinear equation. We propose a residual regularized nonlinear iterative method, and in addition, develop the sparse updating method for the efficient update of coarse spaces. A number of numerical results are presented to complement the analysis and valid the numerical method.

\textbf{Keywords:} numerical homogenization,  multi-scale elliptic problem, monotone nonlinearity, $p$-Laplacian, regularization, sparse updating.
\end{abstract}


\section{Introduction} \label{intro}


Problems with a wide range of coupled temporal and spatial scales are ubiquitous in many phenomena and processes of materials science and biology. Multi-scale modeling and simulation is essential in underpinning the discovery and synthesis of new materials and chemicals with novel functionalities in key areas such as energy, information technology and bio-medicine. In particular, for nonlinear multi-scale problems such as liver surgery, tissue growth, crack propagation, phase transformations, etc. \cite{Christ:2017,Rahman:2017,castaneda2006nonlinear,geers2017homogenization}, complicated interactions between nonlinearity and (nonseparable) multiple scales pose a major challenge for the design and analysis of efficient and robust multi-scale numerical methods. 

In this paper, we discuss the design, analysis and implementation of numerical homogenization type multi-scale methods for multi-scale elliptic PDEs with monotone nonlinearity. Let $\Omega\subset \mathbb{R}^d$, $d\geqs 2$, be a polygonal (polyhedral) domain (open, bounded and connected), we consider the following nonlinear elliptic problem: find $u: \Omega \to \mathbb{R}$, such that 
\begin{equation}
\left\{
	\begin{aligned}
		-\nabla \cdot a(x, u(x), \nabla u(x)) = f\quad \text{in }\Omega\\
		u = 0 \quad \text{on }\partial\Omega
	\end{aligned}
\right.	
\label{eqn:nonlinear}
\end{equation}  
where $a(x, u(x), \nabla u(x)): \Omega\times\mathbb{R}\times\mathbb{R}^d\to \mathbb{R}^d$ is the nonlinear flux function, and $f:\Omega\to \mathbb{R}$ is the source term. The nonlinear flux function $a$ takes the general form $a(x, v, \bxi) = \bA(x, v, \bxi)\bxi$ for $(x,v,\bxi)\in \Omega\times \R \times \R^d$, where 
$\bA:  \Omega\times \R \times \R^d \to \R^{d\times d}$ is a matrix-valued Carath\'{e}odory function (measurable in $x$, continuous in $v$ and $\bxi$).  

We will focus on the Leray-Lions type problem \cite{LerayLions:1965} such that $\bA(x,v,\bxi) = \bA(x, \bxi) $ for $(x, \bxi)\in \Omega\times \R^d$. This situation is very different from the quasilinear case where $\bA(x,v,\bxi) = \bA(x, v)$. In the quasilinear case, the nonlinearities within coarse regions, that induce the change in the heterogeneities, can be parametrized with a low dimensional parameter and linear multiscale theories can apply. For the Leray-Lions case, $\nabla u$ can be highly heterogeneous and one can not use any low dimensional approximation and linear theories. This is true even for a separable case
$\bA(x,u,\nabla u)=\kappa(x) b(\nabla u) $, which require nonlinear cell problems. 

We assume that the Leray-Lions flux takes the following heterogeneous $\vphi$-Laplacian form: $a(x, \bxi) = \kappa(x) \vphi'(|\bxi|)\bxi/|\bxi|$, for $(x, \bxi)\in \Omega\times \R^d$ \cite{Diening:2008b}. $\kappa(\bm{x})\in L^{\infty}(\Omega)$ is symmetric, uniformly elliptic on $\Omega$, and may contain (non-separable) multiple scales. If $\vphi\in C^2:\R^+\to \R^+$ is the so-called $N-$function \cite{Diening:2008b} (see also Section \ref{sec:prelim:form}), where the well-know p-Laplacian is a special case such that $\vphi(t) = t^p/p$ for $p> 1$, the nonlinear multi-scale problem \eqref{eqn:nonlinear} admits a natural variational form \eqref{eqn:energy}, and also admits a unique solution in the Orlicz-Sobolev space $V:=W_0^{1,\vphi}(\Omega)$ \cite{Diening:2008b} ($W_0^{1,p}$ for the p-Laplacian case). More general nonlinear flux will be treated in our future work. The heterogeneous $\vphi$-Laplacian model have applications in many areas such as nonlinear materials \cite{geers2017homogenization}, non-Newtonian fluids \cite{Diening:2013}, image processing \cite{Elmoataz:2015}, machine learning \cite{Slepcev:2017}, etc. 

There are various numerical homogenization approaches for linear multi-scale problems, such as homogenization \cite{papanicolau1978asymptotic,jikov2012homogenization}, numerical homogenization \cite{dur91,ab05,weh02}, heterogeneous multi-scale methods \cite{ee03,abdulle2014analysis, ming2005analysis}, multi-scale network approximations \cite{berlyand2013introduction}, multi-scale finite element methods \cite{Arbogast_two_scale_04, egw10, eh09, Review},
variational multi-scale methods \cite{hughes98, bazilevs2007variational}, flux norm homogenization \cite{berlyand2010flux,Owhadi:2011}, rough polyharmonic splines (RPS) \cite{owhadi2014polyharmonic}, generalized multi-scale finite element methods \cite{egh12, chung2014adaptiveDG,  chung2015residual}. In this paper, we will use the so-called RPS \cite{owhadi2014polyharmonic} or GRPS (generalized rough polyharmonic splines) method as a representative method.

Some of those ideas can be extended to nonlinear problems \cite{feyel1999,geers2017homogenization,ehg04,Efendiev_GKiL_12,chung2014adaptive}. These approaches approximate the solution of nonlinear PDEs on a coarse grid (see Figure \ref{fig:mesh} for an illustration of coarse and fine grids) by using subgrid models. Some common ingredients in these methods are that local solutions are pre-computed over coarse patches, and coarse solves are performed over the coarse space spanned by those bases. The extensions of these methods to nonlinear problems use nonlinear local problems. For example, for the Leray-Lions flux $a(x,\nabla u)$, one can use a local problem to compute a basis $\phi$ in each coarse cell,  such that $-\DIV(a(x,\nabla \phi_\xi))=0$, with boundary conditions $\phi=\xi\cdot x$. The homogenized fluxes are computed by averaging the flux $a^*(\xi)=\langle a(x,\nabla \phi_\xi)\rangle$. These approaches follow homogenization theory
(\cite{ep04, pankov2013g, henning2015error,henning2011heterogeneous, Chiadopiat:1990, allaire1992homogenization, lions2001reiterated, nguetseng2003homogenization}, see also \cite{abdulle2014analysis,henning2015error, ming2005analysis} and, the references therein, for numerical homogenization. Also,
Desbrun et. al. have applied ideas from linear numerical homogenization to coarse graining and model reduction of heterogeneous and nonlinear elasticity problems, as well as Navier-Stokes equations \cite{Kharevych:2009,Chen:2018,Budninskiy:2019,Chen:2019}.

For linear problems, one can construct one linear basis function per coarse node that contains the effects of small scales, in contrast, for nonlinear problems, one may need more multi-scale basis functions for each coarse cell. In \cite{egpz14,chung2017multiscale}, the authors developed a systematic enrichment method, which calculates multi-scale basis functions via local spectral decomposition in each coarse cell, combined with nonlinear harmonic extensions, in order to capture small scales. The approach guarantees the recovery of homogenization results when there is a scale separation, and it is in a spirit of hybridization techniques \cite{cockburn2009unified, efendiev2015spectral, chung2014staggered}. 

In this paper, we take an alternative view for the numerical homogenization of nonlinear elliptic problems. Our view is based on the well-established fact that linear multi-scale problem can be well approximated by the coarse space up to the coarse resolution, even with nonseparable scales. Therefore, our goal is to find a sequence of approximate linear problems and the corresponding coarse spaces, which we call \emph{iterated numerical homogenization}, for the original nonlinear problem. We first present an idealized construction using the so-called "quasi-norm" \cite{Diening:2008a,Diening:2013} and prove the exponential energy convergence of this approach, which is independent of the heterogeneity. Although the resulting numerical scheme is implicit, we can interpret the preconditioned gradient descent method and Newton's method as explicit relaxations of the quasi-norm approach, and show similar exponential energy convergence up to some reasonable assumptions. Although we do not need multiple basis functions per coarse node, we update the coarse space at each iteration. Therefore, we propose a sparse updating strategy to compute only a fraction of basis functions per iteration.

The paper is organized as follows. In Section \ref{sec:prelim}, we present the abstract setting of the nonlinear problem and its regularization, also the finite element formulation and the numerical homogenization for linear problem. The iterative method, and the iterated numerical homogenization are formulated and analyzed in Section \ref{sec:methods}. We present numerical experiments to validate the methods in Section \ref{sec:numerics}, and conclude the paper in Section \ref{sec:conclusion}.

\textbf{Notations:}
The symbol $C$ (or $c$) denotes generic positive constant that may change from one line of an estimate to the next. The dependence of $C$ will be clear from the context or stated explicitly. To further simplify notation we will often write $\lesssim$ to mean $\leqs C$ as well as $\eqsim$ to mean both $\lesssim$ and $\gtrsim$. We use the standard definitions and notations $L^p$, $W^{k,p}$, $H^k$ for Lebesgue and Sobolev spaces. 
We denote the open ball with radius $r$ about $x$ and $0$ by $B_r(x)$ and $B_r:=B_r(0)$. 


\section{Preliminaries} \label{sec:prelim}
We briefly describe some preliminary knowledge needed in the paper. We first introduce the weak formulation and N-functions for the monotone nonlinear elliptic problem. We then discuss the regularization of the possible degeneracy/singularity, which can be used to guarantee the well-posedness of linearized problems. In the end, we formulate the finite element method for the nonlinear problem, and also the numerical homogenization for linear problems. 

\subsection{Weak Formulation and N-function}
\label{sec:prelim:form}

We define the following energy functional
\begin{equation}
	\mathcal{J}(u) : = \int_\Omega \kappa(x)\vphi(|\nabla u|)  - \int_\Omega f u,  \quad u \in V,
	\label{eqn:energy}
\end{equation}
where $\Omega$ is a bounded open set in $\mathbb{R}^d$ with piecewise Lipschitz boundary $\partial \Omega$. $\kappa_{\max} \geqs \kappa(x)\geqs \kappa_{\min} >0$, $\kappa(x)$ is a heterogeneous coefficient with oscillations and possible high contrast, i.e., $\kappa_{\max} / \kappa_{\min}$ is large. In this paper, we assume that the forcing term $f\in L^2(\Omega)$, unless otherwise specified. It is well known that the variational problem: Find $u\in V$, such that
\begin{equation}
u = \argmin _{v \in V} \J(v)
\label{eqn:variational}
\end{equation}
is equivalent to the weak form of nonlinear elliptic equation \eqref{eqn:nonlinear}: Find $u\in V$, such that
\begin{equation}
 \langle A u, v\rangle=\int_{\Omega}a(x,\nabla u) \cdot \nabla v  =\int_{\Omega} f v , \quad \forall  v \in V 
\label{eqn:weakform}
\end{equation}
where $a(x, \bxi) = \kappa(x) \vphi'(|\bxi|)\bxi/|\bxi|$.

We introduce the so-called N-function, and associated function spaces. A continuous function $\vphi$ is said to be a \emph{N-function} if the derivative $\vphi'$ exists, and in addition, $\vphi'$ has the following properties: right continuous, $\vphi'(0)=0$, $\vphi'(t)>0 $ for $t>0$, and $\lim_{t \rightarrow \infty} \vphi'(t)=\infty$.

We denote $\mathcal{L}^{\varphi}(\Omega):=\left\{w: \Omega \rightarrow \mathbb{R}: w \text { measurable and } \int_{\Omega} \varphi(|w| ) d x<\infty\right\}$ as the classical \emph{Orlicz space}, and we say $f\in \Wphi$, the \emph{Sobolev-Orlicz space}, if and only if $f$, $\nabla f \in \Lphi$. See Appendix \ref{sec:app:Orlicz} for more details, such as the definition of the norms $\|\cdot\|_{L^\vphi}$ and $\|\cdot\|_{\Wphi}$. Let $V:=\Wphi_0$ denote the Banach space endowed with $\|\cdot\|_{\Wphi}$ and homogeneous boundary condition, and $V^*$ be its dual space with norm $\|\cdot\|_{W^{-1,\vphi}}$. $V$ is a reflexive and separable Banach space. If $\vphi$ is a N-function and $f\in V^*$, \eqref{eqn:variational}/\eqref{eqn:weakform} is well-posed and admits a unique solution in $V$. 

We say that $\vphi$ satisfies the $\Delta_2$-condition, if there exists $c>0$ such that for all $t\geqs 0$, it holds that $\vphi(2t)\leqs c \vphi(t)$, and we denote $\Delta_2(\vphi)$ as the smallest such constant $c$. Since $\vphi(t)\leqs \vphi(2t)$, the $\Delta_2$ condition is equivalent to $\vphi(2t)\sim \vphi(t)$. We define the function $(\vphi')^{-1}:\R^+\to \R^+$ by $(\vphi')^{-1}(t):=\sup\{u\in \R^+:\phi'(u)\leqs t\}$. If $\vphi'$ is strictly increasing then $(\vphi')^{-1}$ is the inverse function of $\vphi'$.  $\vphi^*:\R^+\to \R^+$ with $\vphi^*(t):=\int_0^t (\vphi')^{-1}(s)\ds$
is again an N-function. $\vphi^*$ is the complementary function of $\vphi$ such that $(\vphi^*)^*=\vphi$. For any $a\geqs 0$, we define the shifted N-function by $\vphi_a'(t):=\frac{\vphi'(a\vee t)}{a\vee t}$ and $\vphi_a(t):= \int_0^{t} \vphi'(s)ds$.

Uniformly in $t\geqs 0$, we have $\vphi(t) \sim \vphi'(t) t$, 
where the constants depends only on $\Delta_2(\vphi, \vphi^*) :=\max(\Delta_2(\vphi), \Delta_2(\vphi^*))$. In addition, we make the following assumption, 
\begin{assumption}[N-function]
Let $\vphi$ be an N-function with strictly increasing $\vphi'$, and $\Delta_2(\{\vphi, \vphi^*\})<\infty$. We assume that
\begin{enumerate}
    \item $\vphi''$ exists, is right continuous, and satisfies $\vphi'(t) \sim t\vphi''(t)$ uniformly in $t\geqs 0$. 
    \item $\vphi''$ is non-decreasing.
\end{enumerate}
\label{asm:vphi}
\end{assumption}

\begin{remark}
Assumption \ref{asm:vphi} is satisfied by p-Laplacian $\vphi(t) = t^p/p$ for $p\geqs 2$, and also the regularized p-Laplacian defined in \ref{sec:prelim:reg}.
\end{remark}

We have the following properties of the flux function associated with an N-function $\vphi$.
\begin{property}
For $a(x, \bxi) = \kappa(x) \vphi'(|\bxi|)\frac{\bxi}{|\bxi|}$, and $\kappa(x)>0$, it holds that
\begin{enumerate}[(1)]
\item $(a(x, \bxi)-a(x, \bzeta))\cdot (\bxi - \bzeta) \geqs c\kappa(x)\vphi''(|\bxi|+|\bzeta|)|\bxi-\bzeta|^2$, 
\item $|a(x, \bxi) - a(x, \bzeta)| \leqs C\kappa(x)\vphi''(|\bxi|+|\bzeta|)|\bxi-\bzeta|$,
\item $a(x,\bm{0})=0$.
\end{enumerate}
where $\vphi$ is an N-function, the constants $c,C$ only depends on $\Dphi<\infty$. 
\label{prp:vphi}
\end{property}

Property \ref{prp:vphi} has been proved for p-Laplacian in \cite{Barrett:1993,Ebmeyer:2005}. We will provide more properties of N-functions in the appendix \ref{sec:app:n-function}.

The functional $\J$ is strictly convex. Furthermore, $\J$ is differentiable and second order Gateaux differentiable. Formally, its first and second variation are given as follows
\begin{equation}
 \J^{\prime}(u)(v)=\int_{\Omega}\kappa(x)\frac{\vphi'(|\nabla u|)}{|\nabla u|}  \nabla u\cdot \nabla v -\int_{\Omega} f v, 
\label{eqn:firstvar}
\end{equation}
\begin{equation}
\J^{\prime \prime}(u)(v, w) 
=\int_{\Omega}\kappa(x)\frac{\vphi'(|\nabla u|)}{|\nabla u|}\nabla v \cdot \nabla w +\int_{\Omega} \kappa(x) \frac{\vphi''(|\nabla u|)|\nabla u|-\vphi'(|\nabla u|)}{|\nabla u|^3}(\nabla u \cdot \nabla w)(\nabla u \cdot \nabla v).
\label{eqn:secondvar}
\end{equation}

We define the Bregman distance \cite{Bregman:1967} of $\J$ as
\begin{equation}
    \bdist{u}{v} := \J(u)-\J(v) -\J'(v)(u-v).
\end{equation}
      
\begin{lemma}
    We assume that $\vphi$ satisfies Assumption \ref{asm:vphi}. For any $u, v\in V$, there exists constants $0<\cj<\Cj$, which depend on $\Delta_2(\vphi, \vphi^*)$, and are independent of $\kappa$ such that
    \begin{equation}
        \cj \int\kappa(x)\vphi''(|\nabla u| + |\nabla (u-v)|)|\nabla(u-v)|^2 \leqs \bdist{u}{v} \leqs \Cj \int\kappa(x)\vphi''(|\nabla u| + |\nabla (u-v)|)|\nabla(u-v)|^2
        \label{eqn:bdist}
    \end{equation}
    see \ref{sec:app:prf:bdist} for the proof.
\label{lem:bdist}
\end{lemma}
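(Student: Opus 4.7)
The plan is to reduce \eqref{eqn:bdist} to a pointwise equivalence for the Bregman density and then invoke Property~\ref{prp:vphi}. Since $u\mapsto -\int_\Omega fu$ is linear, its contribution to $\J(u)-\J(v)-\J'(v)(u-v)$ cancels; hence, writing $P=\nabla u$ and $Q=\nabla v$,
\begin{equation*}
    \bdist{u}{v} = \int_\Omega \kappa(x)\, G(P(x),Q(x))\, dx,\qquad G(P,Q):=\vphi(|P|)-\vphi(|Q|)-\vphi'(|Q|)\tfrac{Q}{|Q|}\cdot(P-Q).
\end{equation*}
It therefore suffices to establish the pointwise equivalence $G(P,Q)\sim \vphi''(|P|+|P-Q|)\,|P-Q|^2$ with constants depending only on $\Dphi$.

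I would represent $G$ by integrating the flux along the segment $X(s):=Q+s(P-Q)$, $s\in[0,1]$. Since $a(x,X)/\kappa(x)=\vphi'(|X|)X/|X|$ is the gradient in $X$ of $\vphi(|X|)$, the fundamental theorem of calculus together with the fact that $(a(x,Q)/\kappa(x))\cdot(P-Q)=\vphi'(|Q|)\tfrac{Q}{|Q|}\cdot(P-Q)$ is independent of $s$ gives
\begin{equation*}
    G(P,Q)=\int_0^1 \frac{\bigl(a(x,X(s))-a(x,Q)\bigr)\cdot(P-Q)}{\kappa(x)}\,ds.
\end{equation*}
The upper bound then follows from Property~\ref{prp:vphi}(2) applied with $\xi=X(s)$, $\zeta=Q$: since $\xi-\zeta=s(P-Q)$ and $|\xi|+|\zeta|\leqs 2|Q|+|P-Q|\leqs 2(|Q|+|P-Q|)$, the scaling $\vphi''(2t)\sim \vphi''(t)$ --- a consequence of $\Dphi<\infty$ combined with $\vphi'(t)\sim t\vphi''(t)$ from Assumption~\ref{asm:vphi} --- bounds the integrand above by $C\,\vphi''(|Q|+|P-Q|)\,s\,|P-Q|^2$, and evaluating $\int_0^1 s\,ds$ delivers the $\Cj$-half of \eqref{eqn:bdist}.

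The lower bound is the main obstacle; the key is to produce a uniform-in-$s$ lower bound on $\vphi''(|X(s)|+|Q|)$ over a subinterval of $[0,1]$ of fixed length. From Property~\ref{prp:vphi}(1) (after dividing out the factor $s$) one has
\begin{equation*}
    \frac{(a(x,X(s))-a(x,Q))\cdot(P-Q)}{\kappa(x)}\;\geqs\;c\,s\,\vphi''(|X(s)|+|Q|)\,|P-Q|^2.
\end{equation*}
The triangle inequality gives $|X(s)|+|Q|\geqs |X(s)-Q|=s|P-Q|$ and trivially $|X(s)|+|Q|\geqs |Q|$, so for $s\in[1/2,1]$,
\begin{equation*}
    |X(s)|+|Q|\;\geqs\;\max\bigl(|Q|,\tfrac{1}{2}|P-Q|\bigr)\;\geqs\;\tfrac{1}{4}(|Q|+|P-Q|).
\end{equation*}
Combining this with monotonicity of $\vphi''$ (Assumption~\ref{asm:vphi}(2)) and the $\Delta_2$-type scaling of $\vphi''$ yields $\vphi''(|X(s)|+|Q|)\gtrsim \vphi''(|Q|+|P-Q|)$ on $[1/2,1]$, and integrating over this subinterval gives $G(P,Q)\geqs \cj\, \vphi''(|Q|+|P-Q|)\,|P-Q|^2$. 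A final application of $\vphi''(2t)\sim \vphi''(t)$, using the triangle-inequality estimate $|P|+|P-Q|\sim |Q|+|P-Q|$, converts this into the form stated in \eqref{eqn:bdist}; multiplying by $\kappa(x)$ and integrating over $\Omega$ completes the proof.
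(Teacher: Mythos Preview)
Your proof is correct and follows essentially the same route as the paper: both represent the Bregman distance via the integral $\int_0^1 (\J'(v+sw)-\J'(v))(w)\,ds$ (equivalently your pointwise $G(P,Q)$), apply Property~\ref{prp:vphi} to bound the integrand by $\kappa\,\vphi''(|\nabla(v+sw)|+|\nabla v|)\,s|\nabla w|^2$, and then control the argument of $\vphi''$ using triangle-inequality estimates (the paper packages these as a separate lemma, your Lemma~\ref{lem:uv}-type bounds are done inline). The only technical variation is in the lower bound: you restrict to $s\in[1/2,1]$ to obtain a uniform lower bound $\tfrac14(|Q|+|P-Q|)$ on the argument of $\vphi''$, whereas the paper integrates over all of $[0,1]$ and relies implicitly on the chain $\vphi(t)\sim t\vphi'(t)\sim t^2\vphi''(t)$ to handle the $s$-dependence; your device is arguably cleaner and more transparent, but the underlying idea is the same.
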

The term $\int\kappa(x)\vphi''(|\nabla u| + |\nabla (u-v)|)|\nabla(u-v)|^2$ is called the quasi-norm associated with $\vphi$, which has already been defined in \cite{Barrett:1993,Ebmeyer:2005} for p-Laplacian equation.

\begin{remark}
For $p$-laplacian problem with $\vphi(t) = t^p/p$ with $p>1$, we have
\begin{equation}
\J(u)=\frac{1}{p} \int_{\Omega}\kappa(x)|\nabla u|^{p}-\int_{\Omega} f u, \quad u \in V
\end{equation}
where the Orlicz space $\Wphi_0(\Omega)$ reduces to $W_{0}^{1, p}(\Omega)$, and   $f \in W^{-1,q}(\Omega)$ ($1/p + 1/q = 1$). 

\begin{equation}
 \J^{\prime}(u)(v)=\int_{\Omega}\kappa(x)|\nabla u|^{p-2} \nabla u \cdot \nabla v -\int_{\Omega} f v. 
\label{eqn:first derivative}
\end{equation}
\begin{equation}
\J^{\prime \prime}(u)(v, w) 
=\int_{\Omega}\kappa(x)|\nabla u|^{p-2} \nabla v \cdot \nabla w  +(p-2) \int_{\Omega}\kappa(x)|\nabla u|^{p-4}(\nabla u \cdot \nabla v)(\nabla u \cdot \nabla w). 
\label{eqn:second derivative}
\end{equation}
where $u,v,w \in V$. 
\end{remark}

\subsection{Regularization}
\label{sec:prelim:reg}

Let $u\in \Wphi_0$ be the solution of \eqref{eqn:variational}/\eqref{eqn:weakform}, $\kappa(x)\vphi'(\nabla u)/|\nabla u|$ can be treated as the coefficient of the elliptic equation \eqref{eqn:weakform}. It is possible that $u$ has critical points such that $|\nabla u|=0$, also $\vphi'(\nabla u)/|\nabla u|$ may grow large. The interaction between $\kappa(x)$ and nonlinearity may amplify such a degeneracy/singularity.  

Regularization techniques have been developed in \cite{Diening:2020} for $1< p< 2$, and for optimal control problem governed by $p-$Laplacian in \cite{casas2016approximation}. We adapt those methods and propose a two parameter regularized energy functional $\J_{\epsilon}$. Let regularization parameters be $\epsilon:=\{\epsilon_-, \epsilon_+\}$ such that $0<\epsilon_-<\epsilon_+$. We take $\vphi(t)=t^p/p$ as an example, and introduce a regularized N-function $\vphi_{\epsilon}: \mathbb{R}^+ \longrightarrow \mathbb{R}$, such that  $\vphi(t)=\lim _{\epsilon \rightarrow\{0, \infty\}} \vphi_{\epsilon}(t)$ for all $t \geqslant 0$ and $\vphi_{\epsilon}(t) \approx \epsilon_{+/-}^{p-2} t^{2}$ for $t\geqs \epsilon_+$ and $0\leqs t\leqs \epsilon_-$. See \eqref{eqn:regvphi} and \eqref{eqn:regvphi2} for two concrete examples. Notice that $\mathcal{J}_{\epsilon}(v)<\infty$, if and only if $v \in$ $W_{0}^{1,2}(\Omega)$.

We define the regularized energy functional as
\begin{equation}
\J_{\epsilon}(u)= \int_{\Omega}\kappa(x)\vphi_{\epsilon}(|\nabla u|)-\int_{\Omega} f u, \quad u \in  H_0^1.
\end{equation}

The regularization error in energy can be defined as,
\begin{equation}
    \err^\reg:=\J(u) - \J_\epsilon(u_\epsilon), 
\end{equation}
where $u_\epsilon$ is the energy minimizer of $\J_\epsilon$. It is beyond the scope of this work to investigate the decay of the regularization error with respect to the regularization parameters $\epsilon$. Instead, we have the following Lemma \ref{lem:regularization} (\cite[Lemma 5.2]{casas2016approximation}) and make Assumption \ref{asm:regularization} for the regularization error.

\begin{lemma}
	$\J_{\epsilon}$ has a unique minimizer $u_{\epsilon}\in H^1_0$ for any $\{\epsilon_{-}, \epsilon_{+}\} \subset (0,\infty)$. Let the sequence $\{\epsilon_{-,k}, \epsilon_{+,k}\}\longrightarrow \{0,\infty\}$, we obtain a sequence of minimizers $u_{\epsilon_k }$ of the regularized variational problems. Let $u$ be the solution of \eqref {eqn:energy}. It holds true that	
	\begin{equation}
	u_{\epsilon_k } \rightarrow u \quad i n \quad H_{0}^{1}(\Omega), \quad \text{ as } k \rightarrow \infty,
	\end{equation}
	\begin{equation}
	\chi_{\Omega \backslash \Omega_{k}\left(u_{\epsilon_k } \right)} \nabla u_{\epsilon_k }  \rightarrow \nabla u, \quad \text{ strongly in } L^{p}(\Omega)^{d}.
	\end{equation}
	\label{lem:regularization}
\end{lemma}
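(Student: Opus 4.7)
The plan is to combine the direct method of the calculus of variations for existence and uniqueness with a weak-compactness plus monotonicity argument for the limit passage, and then upgrade weak convergence to strong convergence using the Bregman-distance/quasi-norm machinery of Lemma \ref{lem:bdist}. For the first assertion, fix $\epsilon=\{\epsilon_-,\epsilon_+\}$. The regularized N-function $\vphi_\epsilon$ is $C^1$ and strictly convex, with $\vphi_\epsilon(t)\gtrsim t^2$ on the extremal regions $[0,\epsilon_-]\cup[\epsilon_+,\infty)$ and a smooth interpolation by $\vphi$ in between; together with $\kappa(x)\geqs\kappa_{\min}>0$, this makes $\J_\epsilon$ strictly convex, coercive on $H_0^1$, and weakly lower semicontinuous, so the direct method yields the unique minimizer $u_\epsilon\in H_0^1$.

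For the limit, I would exploit the minimality $\J_{\epsilon_k}(u_{\epsilon_k})\leqs \J_{\epsilon_k}(u)$ together with the structural relationship between $\vphi_{\epsilon_k}$ and $\vphi$ (they differ only on shrinking extremal sets) to extract a uniform $W_0^{1,p}$ bound on the family $\{u_{\epsilon_k}\}$. Reflexivity then gives, along a subsequence, $u_{\epsilon_k}\rightharpoonup \tilde u$ in $W_0^{1,p}$. To identify $\tilde u=u$, for every $v\in C_c^\infty(\Omega)$ the inequality $\J_{\epsilon_k}(u_{\epsilon_k})\leqs \J_{\epsilon_k}(v)$ passes to the limit: the right-hand side converges to $\J(v)$ by dominated convergence since $\vphi_{\epsilon_k}(|\nabla v|)\to\vphi(|\nabla v|)$ pointwise, while the left-hand side is bounded below by $\J(\tilde u)$ via weak lower semicontinuity of the convex functional $\int\kappa\,\vphi(|\nabla\cdot|)$. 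Hence $\tilde u$ minimizes $\J$ and equals $u$ by uniqueness; a standard subsequence argument promotes this to convergence of the full sequence.

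The strong convergence is the delicate point. Taking $v=u$ in the preceding comparison and combining with weak lower semicontinuity forces $\J_{\epsilon_k}(u_{\epsilon_k})\to\J(u)$. Writing $\bdist{u_{\epsilon_k}}{u}=\J(u_{\epsilon_k})-\J(u)-\J'(u)(u_{\epsilon_k}-u)$, the linear term vanishes because $u_{\epsilon_k}\rightharpoonup u$ and $\J'(u)\in V^*$; splitting the energy on the "good" set $\Omega\setminus\Omega_k:=\{x:\epsilon_{-,k}\leqs|\nabla u_{\epsilon_k}|\leqs\epsilon_{+,k}\}$, where $\vphi_{\epsilon_k}\equiv\vphi$, versus its complement allows one to conclude $\bdist{u_{\epsilon_k}}{u}\to 0$. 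Applying Lemma \ref{lem:bdist} yields $\int\kappa\,\vphi''(|\nabla u|+|\nabla(u_{\epsilon_k}-u)|)|\nabla(u_{\epsilon_k}-u)|^2\to 0$, which under Assumption \ref{asm:vphi} dominates $\|\nabla(u_{\epsilon_k}-u)\|_{L^2}^2$ (for $p\geqs 2$) and hence gives strong convergence in $H_0^1$. The uniform energy bound, combined with the quadratic growth of $\vphi_{\epsilon_k}$ outside the moderate range, forces $|\Omega_k|\to 0$, from which $\chi_{\Omega\setminus\Omega_k}\nabla u_{\epsilon_k}\to\nabla u$ strongly in $L^p(\Omega)^d$ follows.

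The main obstacle is this last upgrade: turning weak $W^{1,p}$-convergence plus energy-value convergence into strong convergence in the presence of the degenerate/singular nonlinearity. The quasi-norm estimate of Lemma \ref{lem:bdist} is the natural tool, but one must carefully reconcile the integrands of $\J_{\epsilon_k}$ and $\J$ by decomposing the domain into the good region $\Omega\setminus\Omega_k$, where the regularized and original densities coincide, and the bad region $\Omega_k$, whose vanishing measure must be verified so that estimates can be transferred between the regularized and the original formulations.
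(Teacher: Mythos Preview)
Your approach has a genuine gap at the point where you claim a uniform $W_0^{1,p}$ bound on $\{u_{\epsilon_k}\}$. The regularized N-function $\vphi_{\epsilon_k}$ has only quadratic growth for $t\geqs\epsilon_{+,k}$ (namely $\vphi_{\epsilon_k}(t)\sim\epsilon_{+,k}^{p-2}t^2\ll t^p$), so the minimality inequality $\J_{\epsilon_k}(u_{\epsilon_k})\leqs\J_{\epsilon_k}(u)$ yields at most a uniform $H_0^1$ bound, not an $L^p$ bound on $\nabla u_{\epsilon_k}$ over the set $\{|\nabla u_{\epsilon_k}|>\epsilon_{+,k}\}$. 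Consequently your weak compactness step in $W_0^{1,p}$ is unjustified, and the downstream Bregman-distance argument for strong convergence also breaks: writing $\bdist{u_{\epsilon_k}}{u}=\J(u_{\epsilon_k})-\J(u)$ (using $\J'(u)=0$) requires controlling $\J(u_{\epsilon_k})$, i.e.\ $\int\kappa|\nabla u_{\epsilon_k}|^p$, which is precisely the quantity you cannot bound.

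The paper's proof (following Casas) avoids this by a localization. It first establishes only $H_0^1$ boundedness and extracts a weak $H_0^1$ limit. Then, defining the cumulative bad sets $B_k:=\bigcup_{j\geqs k}\Omega_j(u_{\epsilon_j})$, it shows $|B_k|\to 0$ via the Chebyshev-type estimate $|\Omega_k(u_{\epsilon_k})|\lesssim k^{-p}$ (which uses only the regularized energy bound). On the good region $\Omega\setminus B_k$ one has $|\nabla u_{\epsilon_j}|\leqs j$ for $j\geqs k$, so there $\vphi_{\epsilon_j}(|\nabla u_{\epsilon_j}|)=\tfrac{1}{p}|\nabla u_{\epsilon_j}|^p$ and a genuine $L^p$ bound holds; this lets one identify the weak limit as lying in $W^{1,p}$ and minimizing $\J$. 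Strong $L^p$ convergence then follows from the Radon--Riesz mechanism (weak $L^p$ convergence on $\Omega\setminus B_k$ together with convergence of the $L^p$ norms, the latter coming from $\J_{\epsilon_k}(u_{\epsilon_k})\to\J(u)$), and strong $H_0^1$ convergence uses the vanishing of $\int_{\Omega_k}|\nabla u_{\epsilon_k}|^2$. Your domain decomposition idea in the final paragraph is on the right track, but it must be brought forward to the compactness step itself rather than invoked only at the end.
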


\begin{assumption}[Regularization]
For some $q>p>0$, we have
\begin{equation}
    \err^\reg \sim {\epsilon_-}^p + {\epsilon_+}^{p-q}.
\end{equation}
\label{asm:regularization}
\end{assumption}

\subsection{Finite Element Approximation}
\label{sec:prelim:fem}

We set up the finite element approximation of the weak formulation \eqref{eqn:weakform}. Let $\Tc$ be a coarse simplicial subdivision of $\Omega$, with the coarse mesh size $H:=\max_{K\in\Tc}H_K$, and $H_K := \mathrm{diam}(K)$. We assume that $\Tc$ is shape regular in the sense that $\max_{K\in\Tc} \frac{h_K}{\rho_K} \leqs \gamma$, for a positive constant $\gamma>0$, where $\rho_K$ is the radius of the inscribed circle in $K$. Let $\Omega_i^0$ be a coarse simplex, e.g. a coarse node $x_i$ or a coarse element $T_i$, we define the $\ell$-th layer patch $\Omega^\ell_i = \cup\{T\in\Tc: T\cap\Omega^{\ell-1}_i\neq \emptyset\}$ for $\ell\geqs 1$ recursively. A fine mesh $\Th$ with mesh size $h$ can be obtained by uniformly subdivide $\Tc$ several times. See Figures \ref{fig:mesh} and \ref{fig:patch} for illustrations.

The finite element space $\Vh$ contains continuous piecewise linear functions with respect to $\Th$ which vanish at the boundary $\partial \Omega$. We only consider continuous piecewise linear functions since higher order regularity of $p$-Laplacian problem is not guaranteed \cite{RN55}. The discrete finite element problem is defined in the following: Find $u_h\in \Vh$ such that
\begin{equation}
\int_{\Omega} a(x,\nabla u_h) \cdot \nabla v = \int_{\Omega} f v_h, \qquad \forall v_h \in \Vh.
\label{eqn:Galerkin}
\end{equation}
or equivalently 
\begin{equation}
\min _{v_h \in \Vh} \J(v_h)
\label{eqn:Ritz}
\end{equation}

By \cite{Diening:2007,Ebmeyer:2005}, we have that $\|u-u_h\|\sim \epsilon(h)$ and 
\begin{equation}
\err^h: = \J_\epsilon(u_\epsilon)-\J_{\epsilon, h}(u_{\epsilon, h}),
\label{eqn:errh}
\end{equation}
such that $\epsilon(h)\to 0$ as $h\to 0$. In this paper, we assume that the fine mesh error $\epsilon(h)$ is negligible. 


\subsection{Numerical Homogenization for Linear Elliptic Equation}
\label{sec:prelim:numhom}

For linear elliptic equation $-\mathrm{div} A(x) \nabla u = f$ with coefficient $A(x)\in (L^\infty)^{d\times d}$, $0<{m^-}:= \inf_{x\in\Omega}\lambda_{\min} (A(x))\leqs \sup_{x\in\Omega} \lambda_{\max} (A(x))=:m^+$. We call $m:=\{m^-, m+\}$ the set of lower and upper bounds. The associated finite element formulation is  
\begin{equation}
 A(\nabla u_h, \nabla v_h) = (f, v_h),  \qquad \forall v_h \in \Vh.
\end{equation}

The goal of \emph{numerical homogenization} is to identify a coarse space $\Vc$, such that the coarse solution $u_H\in \Vc$ of  
\begin{equation}
 A(\nabla u_H, \nabla v_H) = (f, v_H),  \qquad \forall v_H \in \Vc.
 \label{eqn:numhom}
\end{equation}
achieves (quasi-)optimal convergence rate, and also the construction of $\Vc$ achieves (quasi-)optimal complexity. By now, there have been a wide range of literatures on the numerical homogenization of linear elliptic operators, see \cite{dur91,ab05,weh02,ee03,abdulle2014analysis, ming2005analysis, berlyand2013introduction,Arbogast_two_scale_04, egw10, eh09, Review, hughes98, bazilevs2007variational, berlyand2010flux,Owhadi:2011, owhadi2014polyharmonic, egh12, chung2014adaptiveDG,  chung2015residual} for an incomplete list of references. In the following, we briefly introduce the (generalized) rough polyharmonic splines (GRPS) \cite{OwhadiZhangBerlyand:2014, Liu:2018,OwhadiMultigrid:2017}, as a representative approach for the numerical homogenization of linear elliptic equation with coefficient $A(x)$. 

Given $N_H$ suitable measurement functions $\psi_i$, $i=1, \dots, N_H$, for example characteristic function of coarse patches, we define the spaces $\Vi:=\{\phi\in \Vh| \int_{\Omega} \phi(x)\psi_j(x)\dx = \delta_{i,j}, j=1,\dots ,N_H \}$. The GRPS basis is given by the solution of the following constrained minimization problem which is strictly convex and admits a unique minimizer $\phi_i\in \Vi$,
\begin{equation}
	\phi_i = \argmin \limits_{\phi\in \Vi}\|\phi\|_{,\Omega}^2,
\label{eqn:phi}
\end{equation}	
for an appropriate norm $\|\cdot\|_{,\Omega}$, for example, the energy norm $\|\cdot\|_{A,\Omega}:=\int_\Omega A(x) |\nabla \cdot|^2 \dx$. 

The GRPS approach naturally induces a two-level decomposition of $\Vh$: We define the coarse space as $\Vc: = {\rm span}\{\phi_{i}\}$, and the fine space as $\Vf: = \{v\in \Vh |\int_\Omega \psi_i v \dx = 0,  i=1,\dots ,N_H\}$.
Let $\<\cdot, \cdot\>$ be the inner product induced by the norm $\|\cdot\|$, then $\Vc\perp \Vf$ with respect to the $\<\cdot, \cdot\>$ product. Furthermore, let $w_{\mathrm{I}}: = \sum_i (\int_\Omega \psi_i w \dx) \phi_i(x)$ be the interpolation of $w\in\Vh$ in $\Vc$, we have the optimal recovery property: 
\begin{displaymath}
\|w\|^2 = \|w_{\mathrm{I}}\|^2 + \|w-w_{\mathrm{I}}\|^2,
\end{displaymath}
and the optimal approximation property of $\Vc$, 
\begin{equation}
	\|w- w_{\mathrm{I}}\|_{L^2(\Omega)}\leqs C_{m} H \|w\|,
	\label{eqn:optapprox}
\end{equation}
the constants $C_m$ depends on the bounds $m$, the details of the proof can be found in \cite{OwhadiZhangBerlyand:2014,OwhadiMultigrid:2017}.

The basis in \eqref{eqn:phi} has a global support, which can be localized to a small patch of size $O(H\log(1/H))$, and still maintains the optimal accuracy of $O(H)$. Let $\Omega^{\ell}_i$ be a $\ell$-th layer coarse patch, we define the localized basis $\phi_i^\ell$ by the following constrained minimization problem, 
\begin{equation}
	\phi^\ell_i = \argmin \limits_{\phi\in \Vi\text{ and }\mathrm{supp}(\phi)\subset \Omega^\ell_i}\|\phi\|_{,\Omega^\ell_i}^2,	
\label{eqn:philoc}
\end{equation}

\begin{theorem}
We have the following properties of the localized basis,
\begin{enumerate}
    \item exponential decay of the truncation error: 
			\begin{displaymath}	
				\|\phi_i - \phi^{\ell}_i\| \leqs C \exp(-C'\ell),
			\end{displaymath}
			where the constants $C$, $C'$ depend on $m$.
	\item finite element error for the localized basis: if $\ell\simeq \log(1/H)$, $u_H^{\ell}$ is the FEM solution of \eqref{eqn:numhom} in $\Vc^\ell :={\rm span} \{\phi_i^{\ell}\}$, then
  \begin{equation}
  	\|u-u_H^{\ell}\|_{H^1(\Omega)}\leqs CH\|f\|_{L^2(\Omega)}.
  	\label{eqn:femerror}
  \end{equation} 
\end{enumerate}
\end{theorem}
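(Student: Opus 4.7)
My plan is to exploit the saddle-point characterization of the basis and then run the now-standard Caccioppoli-type iterative cutoff argument from the LOD/RPS literature. The Euler--Lagrange condition for \eqref{eqn:phi} gives $\<\phi_i, v\> = \sum_j \lambda_j^{(i)}\int_\Omega \psi_j v\,\dx$ for all $v\in \Vh$; taking $v\in\Vf$ yields $\<\phi_i, v\>=0$. Hence $\phi_i$ is $\<\cdot,\cdot\>$-harmonic with respect to admissible variations, and the analogous orthogonality holds for $\phi_i^\ell$ against variations supported in $\Omega_i^\ell$. The strategy is to build a sufficiently good truncated competitor for $\phi_i^\ell$ and invoke its variational minimality.

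\textbf{Cutoff iteration.} For each integer $k$, introduce a smooth cutoff $\eta_k$ with $\eta_k=0$ on $\Omega_i^k$, $\eta_k=1$ outside $\Omega_i^{k+1}$, and $|\nabla\eta_k|\lesssim H^{-1}$. Using $(1-\eta_{\ell-1})\phi_i$, corrected by a small element of the dual basis of $\{\psi_j\}$ so that $\int\phi\psi_j=\delta_{ij}$ is preserved exactly, as a competitor for $\phi_i^\ell$ reduces the task to bounding $\|\eta_{\ell-1}\phi_i\|^2$. Testing the orthogonality of $\phi_i$ against $\eta_k^2\phi_i$ (projected back into $\Vf$) and controlling the commutator through $|\nabla\eta_k|\lesssim H^{-1}$ yields a Caccioppoli-type geometric decay
\[
\|\phi_i\|_{\Omega\setminus\Omega_i^{k+1}}^2 \leqs q\,\|\phi_i\|_{\Omega\setminus\Omega_i^{k-1}}^2,\qquad q=q(m)\in(0,1).
\]
Iterating over $k=1,\ldots,\ell-1$ gives $\|\phi_i-\phi_i^\ell\|\leqs C e^{-C'\ell}$ with constants depending only on $m$.

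\textbf{Part (2): FEM error.} Here I combine Part (1) with the optimal approximation property \eqref{eqn:optapprox}. Let $u_H\in\Vc$ be the ideal coarse solution of \eqref{eqn:numhom}, which satisfies $\|u-u_H\|_{H^1}\lesssim H\|f\|_{L^2}$, and write $u_H = \sum_i c_i\phi_i$ with $c_i=\int\psi_i u$. The natural candidate $\tilde u_H^\ell:=\sum_i c_i\phi_i^\ell \in \Vc^\ell$ then satisfies, by C\'ea's lemma applied on $\Vc^\ell$,
\[
\|u-u_H^\ell\|_{H^1} \lesssim \|u-u_H\|_{H^1} + \|u_H - \tilde u_H^\ell\|_{H^1}.
\]
The second term reduces to $\sum_i c_i(\phi_i-\phi_i^\ell)$, which is controlled by Part (1), the finite overlap of the patches, and the bound $|c_i|\lesssim\|f\|_{L^2}$, giving a contribution of order $N_H^{1/2}e^{-C'\ell}\|f\|_{L^2}$. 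Choosing $\ell\simeq \log(1/H)$ absorbs this contribution into the $O(H)$ rate and yields \eqref{eqn:femerror}.

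\textbf{Main obstacle.} The delicate step is the discrete Caccioppoli estimate: $\eta_k^2\phi_i$ is not automatically in $\Vf$, so it must be corrected by a small element in the span of a dual basis to $\{\psi_j\}$ before the orthogonality of $\phi_i$ applies. Tracking this correction sharply---so that the resulting constants $C,C'$ depend only on the spectral bounds $m$, and in particular not on the oscillation scale of $A(x)$---is what gives the argument its coefficient-independent, exponential character and what will require the most care in the full proof.
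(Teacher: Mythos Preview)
The paper does not supply its own proof of this theorem; it is quoted as a known result from the RPS/GRPS literature, with the surrounding discussion deferring the details to \cite{OwhadiZhangBerlyand:2014,OwhadiMultigrid:2017}. Your proposal is precisely the standard argument developed in those references and in the closely related LOD framework of M\aa lqvist--Peterseim: the saddle-point characterization yielding $\<\cdot,\cdot\>$-orthogonality of $\phi_i$ to $\Vf$, the iterated Caccioppoli cutoff for the exponential decay in Part~(1), and the combination of C\'ea's lemma, the ideal coarse error bound, and Part~(1) for the FEM estimate in Part~(2). There is therefore nothing in the paper to compare against, and your outline matches the approach the paper implicitly invokes.

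One small point worth tightening in a full write-up: the crude estimate $|c_i|\lesssim\|f\|_{L^2}$ together with the factor $N_H^{1/2}\sim H^{-d/2}$ loses a negative power of $H$; in the cited references this is handled either by a sharper scaling of the measurement functionals (so that $\sum_i c_i^2$ picks up the correct $H$-weight) or simply absorbed by taking the implicit constant in $\ell\simeq\log(1/H)$ large enough. You already flag the constraint-correction step as the delicate part, which is accurate; tracking that correction so the Caccioppoli contraction constant $q$ depends only on $m$ and the mesh shape-regularity, and not on the oscillation of $A$, is exactly where the work lies.
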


\begin{remark}
For the nonlinear problem \eqref{eqn:variational}, we can still define global or local variation basis as in \eqref{eqn:phi} and  \eqref{eqn:philoc}. For example, the global basis can be defined by
\begin{equation}
	\phi_i = \argmin \limits_{\phi\in W^{1,\vphi}(\Omega), (\phi, \psi_j) = \delta_{ij}}\int_{\Omega} \kappa(x)\vphi(|\nabla \psi|)dx,	
	\label{eqn:nonlinearbasis}
\end{equation}	
and it still decays exponentially. However, the basis does not have the optimal approximation property as in \eqref{eqn:optapprox} due to the loss of orthogonality, see Figure \ref{fig:nonlinearbasis}. 
\begin{figure}[H]
	\centering
	{\includegraphics[width=0.35\textwidth]{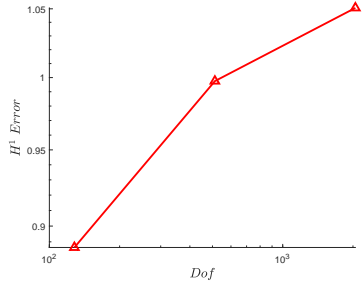}}
	\caption{For the multiscale trigonometric example in \ref{sec:numerics:mstrig}, we solve \eqref{eqn:weakform} using the nonlinear basis defined in \eqref{eqn:nonlinearbasis}. The $H^1$ error does not converge with respect to the degrees of freedom. }
	\label{fig:nonlinearbasis}
\end{figure}
\end{remark}

\section{Numerical Methods} 
\label{sec:methods}

We formulate the numerical methods in this section. For a nonlinear problem, we usually need to approximate the solution iteratively. We will first  introduce an `ideal' iterative method with exponential decay of energy error and 
convergence rate independent of the heterogeneity, which is based on the quasi-norm formulation. Next, we connect the widely used preconditioned gradient descent (PGD) and Newton's method with the quasi-norm based approach, and show the exponential energy decay with some reasonable assumptions. As a consequence, we will formulate the iterated numerical homogenization method, and its $L^2$ residual regularization for the multiscale nonlinear equations.

\subsection{Iterative Methods}
\label{sec:methods:iterative}

\subsubsection{Quasi-norm based Implicit Method} 
\label{sec:methods:quasinorm}

We formulate an `ideal' iterative method based on the quasi-norm introduced in Lemma \ref{lem:bdist}. Suppose that $\un\in \Wphi$ is the n-th iterative approximation to $u$, and $\wn$ is an increment such that $u^{(n+1)} = \un + \wn$, which will be specified later. 

By Lemma \ref{lem:bdist}, for any $\un$ and $\wn$, we have 
\begin{equation}
    \J(\un+\wn)-\J(\un)  \leqs   \J^{\prime}(\un)(\wn)+ \Cj \int_{\Omega}\kappa(x)\vphi''\left(|\nabla \un|+|\nabla \wn|\right)|\nabla \wn|^2.
\label{eqn:upaw}
\end{equation}	

We can define the increment $\wn$ by  
\begin{equation}
	C_q \int_{\Omega}\kappa(x)\vphi''\left(|\nabla \un|+|\nabla \wn|\right)\nabla \wn\cdot \nabla v =-\J^{\prime}(\un)(v),\, \forall v\in  W^{1,\vphi},
	\label{eqn:direction-wn}
\end{equation}
where $C_q = \Cj + 1$, such that the following lemma holds. 

\begin{lemma}
    Under Assumption \ref{asm:vphi}, also assume that $\un \in W_0^{1,\vphi}$, there exists a unique $\wn \in W_0^{1,\vphi} $ satisfying \eqref{eqn:direction-wn}. Furthermore, there exists a constant $C_1 > 1$ depending on $\vphi$ but independent of $\kappa$, such that 
	\begin{equation}
	\J(\un)-\J(u)  \leqs C_1 \int_{\Omega}\kappa(x)\vphi''\left(|\nabla \un|+|\nabla \wn|\right)|\nabla \wn|^2,
	\label{eqn:upperbound-qn}
	\end{equation}	
	and  
	\begin{equation}
		\J(\un+ \wn)-\J(\un) \leqs -\int_{\Omega}\kappa(x)\vphi''\left(|\nabla \un|+|\nabla \wn|\right)|\nabla \wn|^2. 
		\label{eqn:lowerbound-qn}
	\end{equation}
	See \ref{sec:app:existence-wn} for proof.
	\label{lem:quasinorm}
\end{lemma}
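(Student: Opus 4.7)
The plan is to prove the three claims in turn: existence and uniqueness of $\wn$, then the energy-decrease inequality \eqref{eqn:lowerbound-qn}, and finally the residual-energy bound \eqref{eqn:upperbound-qn}. For existence and uniqueness I would reformulate \eqref{eqn:direction-wn} as a monotone operator equation: define $T:W_0^{1,\vphi}\to (W_0^{1,\vphi})^*$ by $\<Tw,v\>:=C_q\int_\Omega \kappa(x)\vphi''(|\nabla \un|+|\nabla w|)\nabla w\cdot \nabla v$. With $\un$ fixed, $\bxi\mapsto \vphi''(|\nabla \un(x)|+|\bxi|)\bxi$ is the flux associated to the shifted $N$-function $\vphi_{|\nabla \un|}$; Assumption~\ref{asm:vphi} together with $\Dphi<\infty$ ensures that this shifted function also satisfies $\Delta_2$ uniformly in the shift, so the analogue of Property~\ref{prp:vphi} yields strict monotonicity, coercivity and hemicontinuity of $T$. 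The Minty--Browder theorem then provides the unique $\wn\in W_0^{1,\vphi}$ satisfying \eqref{eqn:direction-wn}.

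Inequality \eqref{eqn:lowerbound-qn} is nearly immediate once the direction equation is available: testing \eqref{eqn:direction-wn} with $v=\wn$ yields $\J'(\un)(\wn)=-C_q\int \kappa\,\vphi''(|\nabla \un|+|\nabla \wn|)|\nabla \wn|^2$. Substituting into the upper Bregman bound \eqref{eqn:upaw} (itself a direct consequence of Lemma~\ref{lem:bdist} combined with the equivalence $\vphi''(|\nabla(\un+\wn)|+|\nabla \wn|)\sim \vphi''(|\nabla \un|+|\nabla \wn|)$ that follows from the $\Delta_2$-type behaviour of $\vphi''$) and invoking $C_q=\Cj+1$ produces the required coefficient $-1$ on the right-hand side.

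The delicate part is \eqref{eqn:upperbound-qn}. I would begin with convexity of $\J$ combined with $\J'(u)=0$ to write $\J(\un)-\J(u)\leq \J'(\un)(\un-u)$. Next, I test \eqref{eqn:direction-wn} with $v=u-\un$ and apply Cauchy--Schwarz in the weighted inner product $\<v,w\>_\star:=\int \kappa\,\vphi''(|\nabla \un|+|\nabla \wn|)\nabla v\cdot\nabla w$ to obtain $\J'(\un)(\un-u)\leq C_q\,\|\wn\|_\star \|\un-u\|_\star$. The remaining step is to bound $\|\un-u\|_\star$ in terms of $\|\wn\|_\star$. For that I combine the one-sided convexity inequality $\bdist{u}{\un}\leq \J'(\un)(\un-u)$ (which again uses $\J(u)\leq \J(\un)$) with the lower bound in Lemma~\ref{lem:bdist}, and then reconcile the resulting quasi-norm centered at $|\nabla \un|+|\nabla(u-\un)|$ with the one centered at $|\nabla \un|+|\nabla \wn|$ by invoking the uniform $\Delta_2$ behaviour of shifted $N$-functions under Assumption~\ref{asm:vphi}. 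A Young's-inequality absorption with a small enough parameter then moves the $\|\un-u\|_\star^2$ term to the left-hand side and yields \eqref{eqn:upperbound-qn} with $C_1$ depending only on $\cj$, $\Cj$ and $\Dphi$, hence independent of $\kappa$.

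The main obstacle is this last reconciliation between the two shifted quasi-norms: they are not pointwise equivalent, and controlling their ratio in an integrated, $\kappa$-weighted sense is precisely where Assumption~\ref{asm:vphi} (monotonicity of $\vphi''$ and the growth $\vphi'(t)\sim t\vphi''(t)$) becomes essential. Once this step is handled, the exponential energy convergence advertised just before the lemma follows by combining \eqref{eqn:lowerbound-qn} and \eqref{eqn:upperbound-qn} into a geometric contraction factor $1-1/C_1<1$.
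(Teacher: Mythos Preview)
Your treatment of existence/uniqueness via Minty--Browder and of \eqref{eqn:lowerbound-qn} via the direction equation plus \eqref{eqn:upaw} is fine and matches the paper in spirit (the paper phrases existence variationally, minimising $w\mapsto \int_\Omega \kappa\,\phi(|\nabla w|)+\J'(\un)(w)$ with $\phi(t):=\int_0^t \vphi''(|\nabla\un|+\tau)\tau\,d\tau$, but the content is the same).

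The gap is in your argument for \eqref{eqn:upperbound-qn}. After Cauchy--Schwarz in the weighted inner product $\langle\cdot,\cdot\rangle_\star$ you are left with $\|\un-u\|_\star^2=\int_\Omega \kappa\,\vphi''(|\nabla\un|+|\nabla\wn|)\,|\nabla(\un-u)|^2$, and you propose to control this by the Bregman lower bound, which delivers $\int_\Omega \kappa\,\vphi''(|\nabla u|+|\nabla(\un-u)|)\,|\nabla(\un-u)|^2$. These two weights differ precisely in that yours carries $|\nabla\wn|$ and the other carries $|\nabla(\un-u)|$; since $\vphi''$ is non-decreasing under Assumption~\ref{asm:vphi}, on the set where $|\nabla\wn|\gg |\nabla(\un-u)|$ the ratio of weights is unbounded, and there is no $\kappa$-independent integral comparison either (the $\Delta_2$ behaviour of shifted N-functions lets you swap the \emph{shift} between $|\nabla\un|$ and $|\nabla u|$, but not replace $|\nabla\wn|$ by $|\nabla(\un-u)|$). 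You correctly flag this step as the obstacle, but it is not merely delicate---the Cauchy--Schwarz route does not close.

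The paper avoids this by never forming the product $\|\wn\|_\star\|\un-u\|_\star$. Instead, after testing \eqref{eqn:direction-wn} with $v=\un-u$, it rewrites the integrand pointwise as $\kappa\,\vphi'_{|\nabla\un|}(|\nabla\wn|)\,|\nabla(\un-u)|$ (using $\vphi''(|\nabla\un|+|\nabla\wn|)|\nabla\wn|\sim\vphi'_{|\nabla\un|}(|\nabla\wn|)$) and then applies the Young inequality for shifted N-functions, $s\,\vphi'_a(t)\leqs \delta\,\vphi_a(s)+C_\delta\,\vphi_a(t)$, with the common shift $a=|\nabla\un|$. This yields $C_\delta\,\vphi_{|\nabla\un|}(|\nabla\wn|)+\delta\,\vphi_{|\nabla\un|}(|\nabla(\un-u)|)$; the first term is exactly the right-hand side of \eqref{eqn:upperbound-qn}, and in the second term the shift can be moved from $|\nabla\un|$ to $|\nabla u|$ (shift symmetry $\vphi_{|s|}(|s-t|)\sim\vphi_{|t|}(|s-t|)$) and then absorbed into the left via the Bregman lower bound $\J(\un)-\J(u)\geqs c_\J\int\kappa\,\vphi_{|\nabla u|}(|\nabla(\un-u)|)$. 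The crucial point is that the shifted Young inequality separates the cross term with the \emph{same} shift on both pieces, so $|\nabla\wn|$ never contaminates the weight attached to $|\nabla(\un-u)|$.
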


Let the energy error be defined as 
$$e^{(n)}:=\J(\un)-\J(u),$$ 
and we have the following theorem for its exponential decay.
\begin{theorem}
    For the direction $\wn$ defined in \eqref{eqn:direction-wn}, let $u^{(n+1)}: = \un +  \wn$, the energy error decays exponentially with 
    \begin{equation}
            e^{(n+1)} \leqs (1-\frac{1}{C_1}) e^{(n)}
    \end{equation}
    where $C_1$ only depends on $\Delta_2(\vphi, \vphi^*)$, and independent of $\kappa(x)$.
    \label{thm:quasinorm}
\end{theorem}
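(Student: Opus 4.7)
The plan is to derive the geometric contraction directly from the two bounds already furnished by Lemma \ref{lem:quasinorm}. Introducing the shorthand
\[
Q^{(n)} := \int_{\Omega}\kappa(x)\,\vphi''\!\left(|\nabla \un|+|\nabla \wn|\right)|\nabla \wn|^{2},
\]
the upper bound \eqref{eqn:upperbound-qn} reads $e^{(n)}\leqs C_{1} Q^{(n)}$, while \eqref{eqn:lowerbound-qn} reads $\J(u^{(n+1)})-\J(\un)\leqs -Q^{(n)}$. Both are phrased in terms of the same quasi-norm quantity, so they couple perfectly.

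First I would rearrange \eqref{eqn:upperbound-qn} to obtain the lower bound $Q^{(n)}\geqs e^{(n)}/C_{1}$. Next I would subtract $\J(u)$ from both sides of \eqref{eqn:lowerbound-qn}, yielding
\[
e^{(n+1)} = \J(u^{(n+1)})-\J(u) \leqs \J(\un)-\J(u) - Q^{(n)} = e^{(n)}-Q^{(n)}.
\]
Substituting the lower bound on $Q^{(n)}$ gives $e^{(n+1)}\leqs (1-1/C_{1})\,e^{(n)}$, which is the claimed one-step contraction. Since $C_{1}>1$ depends only on $\Delta_{2}(\vphi,\vphi^{*})$ (and not on $\kappa$), iterating this inequality yields exponential decay with a rate independent of the heterogeneity.

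There is essentially no genuine obstacle at this stage: the substantial work of controlling the energy error by the quasi-norm of the increment, and of controlling the one-step energy decrement in the opposite direction, has already been absorbed into Lemma \ref{lem:quasinorm}. The only subtle point worth flagging in the write-up is the consistency of the constants, namely that the same $C_{1}$ appears in both directions (so that the ratio $1-1/C_{1}$ makes sense and lies in $(0,1)$), and that the definition $C_{q}=C_{\J}+1$ in \eqref{eqn:direction-wn} is precisely what forces the lower bound \eqref{eqn:lowerbound-qn} to have constant $1$ (rather than a generic constant), enabling the clean contraction factor. Apart from this bookkeeping, the proof is a two-line combination of the two estimates of Lemma \ref{lem:quasinorm}.
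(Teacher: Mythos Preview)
Your proposal is correct and mirrors the paper's own proof exactly: the paper also combines \eqref{eqn:upperbound-qn} and \eqref{eqn:lowerbound-qn} to obtain $\J(\un)-\J(u^{(n+1)})\geqs \tfrac{1}{C_1}(\J(\un)-\J(u))$, which is precisely your $e^{(n+1)}\leqs e^{(n)}-Q^{(n)}\leqs (1-1/C_1)e^{(n)}$. Your remarks on the role of $C_q=C_\J+1$ and the constant bookkeeping are accurate but not explicitly stated in the paper's two-line proof.
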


\begin{proof}
By lemma \ref{lem:quasinorm}, we have 
\begin{equation}
    \J(\un)-\J(u^{(n+1)})\geqs \frac{1}{C_1} (\J(\un)-\J(u))
\end{equation}
which leads to the desired result. 
\end{proof}

\subsubsection{Explicit Iterative Methods}
\label{sec:methods:explicit}

The quasi-norm based iterative method has the exponential convergence for energy error, and the convergence rate is independent of the oscillatory coefficients $\kappa$, by Theorem \ref{thm:quasinorm}. However, at each iteration, the nonlinear equation \eqref{eqn:direction-wn} needs to be solved to find the search direction $\wn$. Usually, we have to resort to some explicit iterative methods for $\wn$ in practice. This can be done, for example, by using a linearized/approximate operator $A[\un]$ to find $\wn$, and update $u^{(n+1)}=\un+\wn$. 

To be more precise, we define the first variation of $\J$ at $\un$,
\begin{equation}
  \J^{\prime}(\un)(v) =\int_{\Omega}\kappa(x)\frac{\vphi'(|\nabla \un|)}{|\nabla \un|} \nabla \un \cdot \nabla v -\int_{\Omega} f v.
\end{equation}

Let 
\begin{equation}
A_\sd[\un](\wn_\sd, v): = \int_{\Omega}\kappa(x) \frac{\vphi'(|\nabla \un|)}{|\nabla \un|} \nabla \wn_\sd \cdot \nabla v,
\label{eqn:N-func-direction-sd}
\end{equation}
be the linear operator corresponding to the preconditioned steepest descent method. 
For p-Laplacian it writes \cite{Huang:2007}
\begin{equation}
    A_\sd[\un](\wn_\sd, v): = \int_{\Omega}\left(\kappa(x)\left|\nabla \un\right|^{p-2}\right) \nabla \wn_\sd \cdot \nabla v.  
\label{eqn:direction-sd}
\end{equation}

Also, let  
\begin{equation}
\begin{aligned}
A_\nt[\un](\wn_{\nt}, v): =& \int_{\Omega}\kappa(x) \frac{\vphi'(|\nabla \un|)}{|\nabla \un|} \nabla \wn_{\nt} \cdot \nabla v   \\
&+\int_{\Omega} \kappa(x) \frac{\vphi''(|\nabla \un|)|\nabla \un|-\vphi'(|\nabla \un|)}{|\nabla \un|^3}(\nabla \un \cdot \nabla \wn_{\nt})(\nabla \un \cdot \nabla v),
\end{aligned}
\label{eqn:N-func-direction-newton}
\end{equation}
be the linear operator corresponding to the Newton's method,

For $A[\un]=A_\sd[\un]$ or $A[\un]=A_\nt[\un]$, the search direction $\wn$ is then defined analogous to \eqref{eqn:direction-wn}, through the following equation,
\begin{equation}
    C_n A[\un](\wn, v) = -\J'(\un)(v), \text{ for } \forall v \in \Vh. 
    \label{eqn:direction-wn-reg}
\end{equation}
where $C_n$ is a scaling factor analogous to $C_q$ in \eqref{eqn:direction-wn}. 

We make the following assumption,
\begin{assumption}
Assume that for $\un \in H_0^1$, there exists an unique $\wn \in H_0^1 $ satisfying \eqref{eqn:direction-wn-reg}. Furthermore, There exists $C_n>0$ such that 
\begin{equation}
    C_nA[\un](\wn, \wn) \geqs (\Cj+1)\int_{\Omega}\kappa(x)\vphi''\left(|\nabla \un|+|\nabla \wn|\right)|\nabla \wn|^2.
    \label{ineq:upperBoundQuasinorm}
\end{equation}
\label{asm:cn}
\end{assumption}

\begin{remark}
For the well-posedness of \eqref{eqn:direction-wn-reg} in $H^1_0$, we need $A[\un]$ is bounded from above and below. For example, we can regularize the operator $A[\un]$ at each iteration, or alternatively, assume that we are solving a regularized variational problem as defined in \ref{sec:prelim:reg} in the first place. In the latter case, the above assumption \ref{asm:cn} can be satisfied uniformly by letting $C_n \simeq (\epsilon_+/\epsilon_-)^{p-2}$ for any $\un,\wn \in H_0^1$. For general $\vphi$-Laplacian problems without regularization, \eqref{ineq:upperBoundQuasinorm} can only be satisfied for small $\wn$, and the constant $C_n$ may depend on $\un$. 
\end{remark}

\begin{lemma}
	Under Assumptions \ref{asm:vphi} and \ref{asm:cn}, there exists a constant $C_2 > 1$ depending on $\Delta_2(\vphi)$, $\Delta_2(\vphi^*)$, $C_n$, but independent of $\kappa$, such that 
	\begin{align}
	\J(\un)-\J(u)  \leqs C_2 \int_{\Omega}\kappa(x)\vphi''\left(|\nabla \un|+|\nabla \wn|\right)|\nabla \wn|^2,
	\label{eqn:upperbound-qn-reg} \\
	\J(\un+ \wn)-\J(\un) \leqs -\int_{\Omega}\kappa(x)\vphi''\left(|\nabla \un|+|\nabla \wn|\right)|\nabla \wn|^2.
	\label{eqn:upperbound-nstep-qn-reg} 
	\end{align}
	\label{lem:quasinorm-reg}
	See \ref{sec:app:existence-wn} for proof.
\end{lemma}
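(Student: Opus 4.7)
The plan is to mirror the proof of Lemma \ref{lem:quasinorm}, but with the exact quasi-norm bilinear form replaced by the linearization $A[\un]$. Assumption \ref{asm:cn} plays the role that was played by the constant $C_q = \Cj + 1$ in the ideal scheme. The descent bound \eqref{eqn:upperbound-nstep-qn-reg} is essentially immediate, whereas the energy-error bound \eqref{eqn:upperbound-qn-reg} is the heart of the proof and relies on a Cauchy--Schwarz step on $A[\un]$ combined with a Bregman-distance identification.

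For \eqref{eqn:upperbound-nstep-qn-reg}, I would start from the one-sided Bregman bound \eqref{eqn:upaw}, use the defining equation \eqref{eqn:direction-wn-reg} with $v = \wn$ to substitute $\J'(\un)(\wn) = -C_n A[\un](\wn, \wn)$, and then invoke Assumption \ref{asm:cn} to absorb $\Cj + 1$ units of the shifted quasi-norm of $\wn$ and leave exactly $-1$ unit on the right-hand side.

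For \eqref{eqn:upperbound-qn-reg}, the argument I propose has four steps. First, since $u$ minimizes $\J$ we have $\J'(u) = 0$ and so $\J(\un) - \J(u) = \bdist{\un}{u}$; by convexity of $\J$ this gives $\J(\un) - \J(u) \leqs \J'(\un)(\un - u)$. Second, using \eqref{eqn:direction-wn-reg} with $v = \un - u$ together with the symmetry and positive semidefiniteness of $A[\un]$, Cauchy--Schwarz yields
\[
\J(\un) - \J(u) \leqs C_n\, A[\un](\wn,\wn)^{1/2}\, A[\un](\un - u, \un - u)^{1/2}.
\]
Third, using Assumption \ref{asm:vphi} (both $\vphi'(t)/t \sim \vphi''(t)$ and the monotonicity of $\vphi''$) one shows, for arbitrary $v \in V$, the upper bound $A[\un](v,v) \lesssim \int_\Omega \kappa(x)\vphi''(|\nabla \un| + |\nabla v|)|\nabla v|^2$. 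Applying this with $v = \wn$ controls the first factor directly, and applying it with $v = \un - u$, followed by Lemma \ref{lem:bdist}, bounds the second factor by $\bdist{\un}{u} = \J(\un) - \J(u)$. Fourth, dividing both sides by $(\J(\un) - \J(u))^{1/2}$ and squaring gives \eqref{eqn:upperbound-qn-reg} with $C_2$ of order $C_n^2$ times constants depending only on $\Dphi$.

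The main obstacle I expect is the third step, namely controlling $A[\un](v,v)$ by the shifted quasi-norm without picking up any dependence on $\kappa$. For the preconditioned gradient descent operator \eqref{eqn:N-func-direction-sd} this reduces cleanly to $\vphi'(t)/t \sim \vphi''(t)$ and the monotonicity of $\vphi''$. For Newton's operator \eqref{eqn:N-func-direction-newton} one additionally has to dominate the rank-one term, but $(\nabla \un \cdot \nabla v)^2 \leqs |\nabla \un|^2 |\nabla v|^2$ pointwise and the coefficient $(\vphi''(t)t - \vphi'(t))/t^3$ is comparable to $\vphi''(t)/t^2$ under Assumption \ref{asm:vphi}, so the rank-one contribution is absorbed into the first-variation contribution. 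The dependence of $C_2$ on $C_n$, rather than on $\Dphi$ alone as in Lemma \ref{lem:quasinorm}, is the unavoidable price of replacing the exact quasi-norm update by an explicit linearization.
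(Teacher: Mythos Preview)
Your proof of the descent bound \eqref{eqn:upperbound-nstep-qn-reg} is identical to the paper's: combine \eqref{eqn:upaw}, the defining equation \eqref{eqn:direction-wn-reg} with $v=\wn$, and Assumption \ref{asm:cn}.

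For the energy-error bound \eqref{eqn:upperbound-qn-reg} your argument is correct but takes a different route. The paper does not apply Cauchy--Schwarz on the bilinear form $A[\un]$. Instead, it bounds the integrand pointwise,
\[
-C_n A[\un](\wn,\un-u)\;\leqs\; C_n\int_\Omega \kappa(x)\,\vphi''\bigl(|\nabla\un|+|\nabla\wn|\bigr)\,|\nabla\wn|\,|\nabla(\un-u)|,
\]
using $\vphi'(t)/t\sim\vphi''(t)$ and the monotonicity of $\vphi''$, then rewrites $\vphi''(|\nabla\un|+|\nabla\wn|)|\nabla\wn|\sim\vphi'_{|\nabla\un|}(|\nabla\wn|)$ and splits the product via the Young-type inequality for shifted N-functions (Lemma \ref{lem:young}) and the symmetry Lemma \ref{lem:ab}. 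The second term, $C_n\delta\int\kappa\,\vphi_{|\nabla u|}(|\nabla(\un-u)|)$, is absorbed into the left-hand side using the lower Bregman bound \eqref{ineq:qusi-lowerbound} after choosing $\delta$ small depending on $C_n$. Your Cauchy--Schwarz route sidesteps Lemmas \ref{lem:young} and \ref{lem:ab} altogether and reaches the same conclusion more directly, at the cost of a constant $C_2$ of order $C_n^2/\cj$; the paper's constant depends on $C_n$ through the Young constant $C_\delta$ with $\delta\sim\cj/C_n$. Both are acceptable since the lemma only claims dependence on $C_n$ and $\Dphi$, and both arguments keep $\kappa$ as a harmless positive weight throughout.
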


\begin{theorem}
Under the conditions in Lemma \ref{lem:quasinorm-reg}, and furthermore we assume that $C_n$ in \eqref{eqn:direction-wn-reg} is uniformly bounded, namely, $C_n\leqs M_C$, $\forall n\in \mathbb{N}$. For the direction $\wn$ defined in \eqref{eqn:direction-wn-reg}, let $u^{(n+1)}: = \un +  \wn$, there exists a constant $0<\theta<1$ such that 
\begin{equation}
    e^{(n+1)} \leqs \theta e^{(n)},
\end{equation}
    where $\theta$ depends on $M_C$, $\Delta_2(\vphi)$, $\Delta_2(\vphi^*)$, and independent of $\kappa$.
    \label{thm:explicit}
\end{theorem}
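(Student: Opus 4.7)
The plan is to follow essentially the same telescoping argument used in the proof of Theorem \ref{thm:quasinorm}, but replacing the clean quasi-norm identities of Lemma \ref{lem:quasinorm} with the approximate versions from Lemma \ref{lem:quasinorm-reg}. First I would chain the two inequalities \eqref{eqn:upperbound-qn-reg} and \eqref{eqn:upperbound-nstep-qn-reg}: the upper bound gives
\begin{equation*}
\int_{\Omega}\kappa(x)\vphi''(|\nabla \un|+|\nabla \wn|)|\nabla \wn|^2 \;\geqs\; \frac{1}{C_2}\bigl(\J(\un)-\J(u)\bigr) = \frac{e^{(n)}}{C_2},
\end{equation*}
while the second inequality, combined with $u^{(n+1)}=\un+\wn$, says
\begin{equation*}
\J(u^{(n+1)})-\J(\un) \;\leqs\; -\int_{\Omega}\kappa(x)\vphi''(|\nabla \un|+|\nabla \wn|)|\nabla \wn|^2.
\end{equation*}
Adding these and subtracting $\J(u)$ on both sides gives $e^{(n+1)} - e^{(n)} \leqs -e^{(n)}/C_2$, that is $e^{(n+1)}\leqs(1-1/C_2)e^{(n)}$.

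The remaining question is whether the contraction factor can be taken uniform in $n$. This is exactly the role of the extra hypothesis $C_n\leqs M_C$: inspecting Lemma \ref{lem:quasinorm-reg}, the constant $C_2$ is produced from $\Cj$, $\Delta_2(\vphi)$, $\Delta_2(\vphi^*)$ and the scaling constant $C_n$ used in \eqref{eqn:direction-wn-reg}, but it does not depend on $\kappa$ or on $\un$ itself. Thus $C_2$ may be replaced by a single constant $\bar C_2=\bar C_2(M_C,\Delta_2(\vphi),\Delta_2(\vphi^*))>1$, valid at every iteration. Setting $\theta := 1-1/\bar C_2\in(0,1)$ then yields the stated geometric decay with a rate depending only on the listed quantities and independent of the heterogeneous coefficient.

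The only step that is not purely mechanical is verifying that the $C_2$ in Lemma \ref{lem:quasinorm-reg} genuinely has no hidden dependence on $\un$ beyond what enters through $C_n$; this is where the uniform boundedness hypothesis is essential, since without it one only obtains a rate $\theta^{(n)}=1-1/C_2^{(n)}$ that could deteriorate as $n$ grows and in the worst case approach one. Apart from this uniformity bookkeeping, the proof is a direct copy of the argument for Theorem \ref{thm:quasinorm}, and can be written in just a few lines.
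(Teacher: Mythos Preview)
Your proposal is correct and follows essentially the same approach as the paper: the argument mirrors the proof of Theorem \ref{thm:quasinorm}, combining the two inequalities of Lemma \ref{lem:quasinorm-reg} to obtain $e^{(n+1)}\leqs (1-1/C_2)e^{(n)}$, and then invoking the uniform bound $C_n\leqs M_C$ (together with the stated dependence of $C_2$ in Lemma \ref{lem:quasinorm-reg}) to replace $C_2$ by a constant independent of $n$.
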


We can formulate the iterative algorithm here.
\begin{algorithm}[H]
	\caption{ {\textbf{Iterative Methods}}}
	\label{alg:iterative}
	\begin{algorithmic}
		\STATE{
			\textbf{STEP 1}: Initialize $u^{(0)}$, $n=0$, and tolerance  $\varepsilon>0$.}
		
		\textbf{STEP 2}:
        Find the increment $\wn$ by solving \eqref{eqn:direction-wn} for the quasi-norm based approach, or \eqref{eqn:direction-wn-reg} for either PGD direction with \eqref{eqn:N-func-direction-sd} or Newton's direction with \eqref{eqn:N-func-direction-newton}. \\
        \textbf{STEP 3}:
		Update $u^{(n+1)}:=\un+ \wn$.\\
		\textbf{STEP 4}:
		If $(\J(\un)-\J(u^{(n+1)}))/|\J(u^{(n)})|< \ve$, \textbf{STOP}; else goto \textbf{STEP 2} and set $n:=n+1$.
	\end{algorithmic}
\end{algorithm}	

\begin{remark}
In practice, the constants $C_q$ and $C_n$ are not known \emph{a priori}. Instead, we can solve the equation 
 \begin{equation}
     A[\un](\wn, v) = -\J'(\un)(v), \text{ for } \forall v \in \Vh,
\end{equation}
and use a line search algorithm \cite{Nocedal:2006} to find $\alpha_n = \argmin_{\alpha\geqs 0}\J(\un + \alpha \wn)$, and update $u^{(n+1)} = \un + \alpha_n \wn$.
\end{remark}

\subsection{Iterated Numerical Homogenization}
\label{sec:methods:numhom}

In Algorithm \ref{alg:iterative}, each iteration requires solving elliptic type equation \eqref{eqn:direction-wn-reg} which combines the highly oscillatory coefficient $\kappa(x)$ and the nonlinearity from $\vphi$. Besides, the gradient $|\nabla \un|$ may possibly approach zero or grow large during the iteration. Therefore, a proper multi-scale solver can be employed to reduce the computational cost and maintain the coarse mesh accuracy. We are going to iteratively use the numerical homogenization method introduced in Section \ref{sec:methods:numhom} to solve \eqref{eqn:direction-wn-reg}. 

Let the norm associated with $A[\un]$ be $\|\cdot\|_{A[\un]}$, the corresponding coarse space basis $\phi^{(n)}_i$ can be defined by  
\begin{equation}
	\phi^{(n)}_i = \argmin \limits_{\phi\in \Vi}\|\phi\|_{\Ar[\un]}^2,	
\label{eqn:variationalbasis}
\end{equation}	
and the coarse space $\Vcn:=\mathrm{span}\{\phi^{(n)}_i\}$. 

We can find the search direction $\wnH\in \Vcn$ by solving \eqref{eqn:direction-wn-reg} in $\Vcn$, with $A[\un]$ given by \eqref{eqn:N-func-direction-sd} or \eqref{eqn:N-func-direction-newton}.
\begin{equation}
C'_n A[\un](\wnH , v)=-\J'(\un)(v), \quad \forall v \in \Vcn.
\label{eqn:direction-wnH}
\end{equation}		


Similar to Assumption \ref{asm:cn}, we make the following assumption,
\begin{assumption}
There exists $C'_n>0$ such that, 
\begin{equation}
    C'_n A[\un](\wnH, \wnH) \geqs (\Cj+1)\int_{\Omega}\kappa(x)\vphi''\left(|\nabla \un|+|\nabla \wnH|\right)|\nabla \wnH|^2.
    \label{ineq:upperBoundQuasinormH}
\end{equation}
\label{asm:cnH}
\end{assumption}
For simplicity, we take $C'_n = C_n$ in both \eqref{ineq:upperBoundQuasinorm} and \eqref{ineq:upperBoundQuasinormH}. Please see Figure \ref{fig:coarseerrorpgd} for numerical results and discussions related to Assumptions \ref{asm:cn} and \ref{asm:cnH}. 

\begin{lemma}
Suppose that $\un \in \Vh$. There exists positive constants $C_{\Th}$ depending on the shape regularity of the mesh $\Th$, such that, 
\begin{equation}
    \|\J^{\prime}(\un)\|_{L_h^2} \leqs \frac{C_{\Th}}{h}\|a(x, \nabla\un)\|_{L^2}  + \|f\|_{L^2},
\end{equation}
where $\|\J^{\prime}(\un)\|_{L_h^2}$ is defined as 
$
\|\J^{\prime}(\un)\|_{L_h^2}:= \displaystyle \min_{v_h \in \Vh} \frac{< \J^{\prime}(\un), v_h >}{\|v_h\|_{L^2}}
$
\begin{proof}
The proof is based on the scaling argument that for any $v_h \in \Vh$, $\|\nabla v_h\|_{L^2} \leqs C_{\Th}/h \|v_h\|_{L^2}$, where the constant $C_{\Th}$ depends only on the mesh regularity. Therefore, we have 
$\|\J'(\un)\|_{L_h^2}  \leqs C_{\Th}/h \|a(x, \nabla\un)\|_{L^2} +\|f\|_{L^2}$.
\end{proof}
\end{lemma}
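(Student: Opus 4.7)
The plan is to combine the explicit formula for the first variation $\J'(\un)$, Cauchy--Schwarz on its two pieces, and a standard inverse inequality for piecewise linear finite elements on a shape-regular mesh. The only subtlety is the presence of $1/h$ in the bound, which must come from the inverse estimate applied to the flux term $\int_\Omega a(x,\nabla\un)\cdot \nabla v_h$ since we need to control $\|\nabla v_h\|_{L^2}$ by $\|v_h\|_{L^2}$.

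First I would expand the first variation using \eqref{eqn:firstvar} (equivalently the weak form \eqref{eqn:weakform} with $a(x,\bxi)=\kappa(x)\vphi'(|\bxi|)\bxi/|\bxi|$), writing
\[
\langle \J'(\un), v_h\rangle \;=\; \int_\Omega a(x,\nabla \un)\cdot \nabla v_h \;-\; \int_\Omega f\, v_h.
\]
Applying Cauchy--Schwarz termwise gives
\[
|\langle \J'(\un), v_h\rangle| \;\leqs\; \|a(x,\nabla \un)\|_{L^2}\,\|\nabla v_h\|_{L^2} \;+\; \|f\|_{L^2}\,\|v_h\|_{L^2}.
\]

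Second, I would invoke the standard inverse inequality for continuous piecewise linear functions on a shape-regular simplicial mesh $\Th$: there exists a constant $C_{\Th}$ depending only on the shape regularity parameter $\gamma$ such that $\|\nabla v_h\|_{L^2(\Omega)} \leqs (C_{\Th}/h)\|v_h\|_{L^2(\Omega)}$ for every $v_h\in \Vh$. This is the scaling argument alluded to in the hint: on each element $K$, a change of variables to a reference simplex transfers an $h_K^{-1}$ factor from the gradient, and then shape regularity together with $h_K\geqs c h$ (after localization/aggregation over elements, using quasi-uniformity or at least the local inverse estimate summed over $K$) absorbs the mesh dependence into $C_{\Th}$.

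Third, substituting the inverse inequality into the Cauchy--Schwarz bound and dividing both sides by $\|v_h\|_{L^2}$ yields
\[
\frac{\langle \J'(\un), v_h\rangle}{\|v_h\|_{L^2}} \;\leqs\; \frac{C_{\Th}}{h}\|a(x,\nabla \un)\|_{L^2} \;+\; \|f\|_{L^2}
\]
uniformly in $v_h\in \Vh\setminus\{0\}$, from which the claim follows by the definition of $\|\J'(\un)\|_{L^2_h}$ (interpreting the extremization over $v_h$ so that the bound is the relevant one on the ratio).

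The only real obstacle is bookkeeping rather than mathematics: tracking the inverse-inequality constant $C_{\Th}$ through a possibly non-quasi-uniform shape-regular family (which requires either assuming quasi-uniformity so that $h_K\eqsim h$, or replacing the global $1/h$ by a local factor $1/h_K$ and then taking a worst case). Under the stated shape regularity this reduces to the classical inverse estimate and no further argument is needed.
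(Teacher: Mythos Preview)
Your proposal is correct and follows essentially the same approach as the paper: expand $\langle \J'(\un), v_h\rangle$ via the weak form, apply Cauchy--Schwarz to each term, and use the inverse inequality $\|\nabla v_h\|_{L^2}\leqs (C_{\Th}/h)\|v_h\|_{L^2}$ on the flux term. The paper's proof is just a one-line version of what you wrote out in detail.
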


\begin{corollary}
	For the finite element solution $\wnH\in \Vcn$ to \eqref{eqn:direction-wn-reg}, where $\Vcn$ is the space of "good" basis (global basis or local basis with $\ell\simeq \log H$ such that \eqref{eqn:femerror} holds). We have
	\begin{equation}
		\|\wn_h-\wnH\|_{A[\un]}\leqs \frac{1}{C_n\lambda_{\min}(A[\un])^{1/2}} H  \|\J'(\un)\|_{L_h^2}.
	\label{eqn:errhom}
	\end{equation}
	where $\lambda_{\min}(A[\un]):=\inf_{x\in\Omega}\inf_{\|v\|=1}v^T a[\un](x) v$, and $a[\un](x)$ denotes the elliptic coefficient in the operator $A[\un]$.
    \label{thm:homoerror}
\end{corollary}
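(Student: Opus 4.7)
The plan is to recognize that equation \eqref{eqn:direction-wn-reg}, restricted to $\Vh$ and to $\Vcn$ respectively, defines two Galerkin problems for the \emph{same} linear bilinear form $C_n A[\un](\cdot,\cdot)$ and the \emph{same} right-hand side $-\J'(\un)(\cdot)$, with the nonlinearity frozen at the current iterate. Hence the corollary is a direct application of the linear numerical homogenization theory of Section \ref{sec:prelim:numhom} with effective coefficient $a[\un](x)$ and source functional $-\J'(\un)/C_n$, and the task reduces to tracking how the linear estimate $\|u - u_H\|_{H^1} \leqs C H \|f\|_{L^2}$ translates into the $A[\un]$-energy norm with the explicit prefactor $1/(C_n \lambda_{\min}(A[\un])^{1/2})$.

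First, I would appeal to Galerkin orthogonality, $A[\un](\wn_h - \wnH, v) = 0$ for all $v \in \Vcn$, together with Cea's lemma, to obtain $\|\wn_h - \wnH\|_{A[\un]} = \inf_{v \in \Vcn} \|\wn_h - v\|_{A[\un]}$. Choosing $v = (\wn_h)_I$, the interpolation of $\wn_h$ into $\Vcn$ through the measurement functionals $\psi_i$, and using the $A[\un]$-orthogonality of the two-level splitting $\Vh = \Vcn \oplus \Vf$ together with the equation satisfied by $\wn_h$, I can rewrite
\[
\|\wn_h - (\wn_h)_I\|_{A[\un]}^2 \;=\; A[\un]\bigl(\wn_h - (\wn_h)_I,\, \wn_h\bigr) \;=\; -\frac{1}{C_n}\,\J'(\un)\bigl(\wn_h - (\wn_h)_I\bigr).
\]
Bounding the right-hand side by the $L_h^2$-dual norm gives $|\J'(\un)(\wn_h - (\wn_h)_I)| \leqs \|\J'(\un)\|_{L_h^2}\,\|\wn_h - (\wn_h)_I\|_{L^2}$. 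I would then apply the optimal $L^2$-approximation property \eqref{eqn:optapprox}: since $(\wn_h - (\wn_h)_I)_I = 0$, the estimate reads $\|\wn_h - (\wn_h)_I\|_{L^2} \leqs C_m H \|\wn_h - (\wn_h)_I\|_{A[\un]}$. Combining and dividing by the energy norm on both sides closes the argument and yields $\|\wn_h - \wnH\|_{A[\un]} \leqs (C_m H / C_n)\,\|\J'(\un)\|_{L_h^2}$.

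The remaining task is to make the dependence of $C_m$ on $A[\un]$ explicit. The Poincar\'e-type step used to prove \eqref{eqn:optapprox} converts an $L^2$-versus-$H^1$ bound into one involving the energy norm; since $\|v\|_{A[\un]}^2 \geqs \lambda_{\min}(A[\un])\,\|\nabla v\|_{L^2}^2$, that conversion produces the factor $\lambda_{\min}(A[\un])^{-1/2}$ in $C_m$, which is exactly the prefactor appearing in the stated bound. In the localized variant, the additional truncation error from replacing $\phi_i$ by $\phi_i^\ell$ is exponentially small in $\ell$ and is absorbed when $\ell \simeq \log(1/H)$, matching the standing hypothesis on the coarse space.

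The main obstacle is keeping this scaling uniform as $n$ varies, since $a[\un] = \kappa(x)\,\vphi'(|\nabla \un|)/|\nabla \un|$ combines the possibly high-contrast $\kappa$ with the potentially degenerate/singular factor from $\vphi$, so $\lambda_{\min}(A[\un])$ may deteriorate along the iteration. This is precisely the place where the regularization of Section \ref{sec:prelim:reg} is invoked: with $\epsilon = (\epsilon_-,\epsilon_+)$ in force, $a[\un]$ is bounded below by a positive constant depending on $\epsilon$ and $\kappa_{\min}$, making the prefactor controllable independently of the iteration index $n$.
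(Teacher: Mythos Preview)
Your proposal is correct and is essentially the argument the paper has in mind: the paper gives no explicit proof of this corollary, presenting it as a direct consequence of the linear numerical homogenization theory of Section~\ref{sec:prelim:numhom} (in particular \eqref{eqn:optapprox} and \eqref{eqn:femerror}) applied to the frozen-coefficient problem with coefficient $a[\un](x)$ and right-hand side $-\J'(\un)/C_n$. Your Galerkin-orthogonality/Aubin--Nitsche-type computation supplies exactly those details, and your tracking of the constant $C_m$ through the coefficient-independent Poincar\'e inequality on $\Vf$ followed by the energy-norm conversion $\|\nabla v\|_{L^2}^2 \leqs \lambda_{\min}(A[\un])^{-1}\|v\|_{A[\un]}^2$ correctly produces the stated prefactor.
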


\begin{theorem}
    Under the conditions in Lemma \ref{lem:quasinorm-reg}, and furthermore we assume  that $1 \leqs C_n \leqs M_C$, $\|\J'(\un)\|_{L^2_h}^2 \leqs M_L$, and $\lambda_{\min}(A[\un])\geqs M_A >0$ uniformly for $n\geqs 0$. For the direction $\wnH$ defined in \eqref{eqn:direction-wnH}, and $u^{(n+1)}: = \un +  \wnH$, there exists a constant $C_H>0$ such that 
	\begin{equation}
	(e^{(n+1)}-C_H H^2) \leqs (1-\frac{\theta}{C_{\J}+1}) (e^{(n)}-C_H H^2),
	\end{equation}
	where $C_H$ depends on  $m$, $\Delta_2(\vphi, \vphi^*)$, $M_C$, $M_L$ and $M_A$.
	$\theta$ is the constant in Theorem \ref{thm:explicit}.
	\label{thm:inh}
\end{theorem}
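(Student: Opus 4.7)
The plan is to mirror the exponential-decay proof of Theorem~\ref{thm:explicit}, while accounting for the $O(H)$ discrepancy between the fine-grid increment $\wn$ and its coarse counterpart $\wnH$ furnished by Corollary~\ref{thm:homoerror}; this discrepancy will propagate into an additive $O(H^2)$ offset in the fixed point of the energy-error recursion. First I would replay the derivation of Lemma~\ref{lem:quasinorm-reg} with $\wnH$ in place of $\wn$: setting $v=\wnH$ in \eqref{eqn:direction-wnH} gives $\J'(\un)(\wnH)=-C_n A[\un](\wnH,\wnH)$, so combining Assumption~\ref{asm:cnH} with the Bregman/quasi-norm bound \eqref{eqn:upaw} yields the analog of \eqref{eqn:upperbound-nstep-qn-reg},
\begin{equation*}
\J(u^{(n+1)})-\J(\un)\;\leqs\;\J'(\un)(\wnH)+\Cj\,Q_H\;\leqs\;-Q_H,
\end{equation*}
where $Q_H:=\int_\Omega\kappa(x)\vphi''(|\nabla\un|+|\nabla\wnH|)|\nabla\wnH|^2$.

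Next I would relate $Q_H$ to $Q:=\int_\Omega\kappa(x)\vphi''(|\nabla\un|+|\nabla\wn|)|\nabla\wn|^2$, the quasi-norm appearing in \eqref{eqn:upperbound-qn-reg}. Write $\xi:=\wn-\wnH\in\Vh$; the shifted $N$-function $\vphi_{|\nabla\un|}$ satisfies $\Delta_2$ uniformly (Appendix~\ref{sec:app:n-function}) and obeys $\vphi_a(t)\sim\vphi''(a+t)t^2$, which yields a quasi-triangle inequality
\begin{equation*}
Q\;\leqs\;C'\Bigl(Q_H+\int_\Omega\kappa(x)\vphi''(|\nabla\un|+|\nabla\xi|)|\nabla\xi|^2\Bigr).
\end{equation*}
The remainder term is controlled by the $A[\un]$-norm of $\xi$: using $a[\un](x)\sim\kappa(x)\vphi''(|\nabla\un|)$ from Assumption~\ref{asm:vphi}, monotonicity of $\vphi''$, and the $\Delta_2$-property (together with the regularization that caps $\vphi''$ from above), one obtains
\begin{equation*}
\int_\Omega\kappa(x)\vphi''(|\nabla\un|+|\nabla\xi|)|\nabla\xi|^2\;\lesssim\;\|\xi\|_{A[\un]}^2\;\leqs\;\frac{H^2\,\|\J'(\un)\|_{L^2_h}^2}{C_n^2\,\lambda_{\min}(A[\un])}\;\leqs\;\frac{M_L}{M_A}H^2,
\end{equation*}
by Corollary~\ref{thm:homoerror} together with the uniform bounds $C_n\leqs M_C$, $\|\J'(\un)\|_{L^2_h}^2\leqs M_L$, $\lambda_{\min}(A[\un])\geqs M_A$. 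Thus $Q_H\geqs \tfrac{1}{C'}Q-C''H^2$ with $C',C''$ depending only on $m$, $\Dphi$, $M_C$, $M_L$, $M_A$.

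Combining the two steps with the upper bound $e^{(n)}\leqs C_2 Q$ from \eqref{eqn:upperbound-qn-reg} gives
\begin{equation*}
e^{(n+1)}-e^{(n)}\;=\;\J(u^{(n+1)})-\J(\un)\;\leqs\;-Q_H\;\leqs\;-\tfrac{1}{C'C_2}\,e^{(n)}+C''H^2,
\end{equation*}
which rearranges to $e^{(n+1)}\leqs(1-\mu)e^{(n)}+C''H^2$ with $\mu:=1/(C'C_2)$. Choosing $C_H:=C''/\mu$ and subtracting $C_H H^2$ from both sides yields the claimed recursion
\begin{equation*}
e^{(n+1)}-C_H H^2\;\leqs\;(1-\mu)\bigl(e^{(n)}-C_H H^2\bigr),
\end{equation*}
and a careful accounting of the constants in the Bregman bound of Step~1 and the quasi-triangle inequality of Step~2 identifies $1-\mu$ with the $1-\theta/(\Cj+1)$ form stated in the theorem, with $\theta$ retaining its meaning from Theorem~\ref{thm:explicit} and the extra $\Cj+1$ arising from \eqref{eqn:upaw}.

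The principal difficulty is Step~2: establishing the quasi-triangle inequality for the shifted $N$-function modular, and then dominating $\int\kappa\vphi''(|\nabla\un|+|\nabla\xi|)|\nabla\xi|^2$ by $\|\xi\|_{A[\un]}^2$. Without the regularization that caps $\vphi''$ from above, the constants in the latter estimate can blow up in regions where $|\nabla\xi|$ dominates $|\nabla\un|$; this is precisely why the uniform bounds on $C_n$, $\|\J'(\un)\|_{L^2_h}$, $\lambda_{\min}(A[\un])$, and the regularization framework of Section~\ref{sec:prelim:reg}, are essential. Once these functional-analytic comparisons are in place, the rest of the argument is a straightforward algebraic rearrangement.
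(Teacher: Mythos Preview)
Your argument is essentially correct, but the paper takes a cleaner route that avoids your Step~2 entirely. After establishing, exactly as you do, that
\[
\J(\un)-\J(u^{(n+1)})\;\geqs\;C_nA[\un](\wnH,\wnH)-\Cj\,Q_H\;\geqs\;\frac{C_n}{\Cj+1}\,A[\un](\wnH,\wnH),
\]
the paper stays in the $A[\un]$-inner product rather than passing to the quasi-norm. Since $\wnH\in\Vcn\subset\Vh$ is the Galerkin approximation of $\wn_h$, orthogonality gives the exact Pythagorean identity
\[
A[\un](\wnH,\wnH)=A[\un](\wn_h,\wn_h)-\|\wn_h-\wnH\|_{A[\un]}^2,
\]
so the $H^2$ term from Corollary~\ref{thm:homoerror} enters directly, and $C_nA[\un](\wn_h,\wn_h)\geqs \J(\un)-\J(\un+\wn_h)$ (from the lower Bregman bound) closes the loop via Theorem~\ref{thm:explicit}. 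This bypasses both the quasi-triangle inequality for $\vphi_{|\nabla\un|}$ and, more importantly, your estimate $\int\kappa\,\vphi''(|\nabla\un|+|\nabla\xi|)|\nabla\xi|^2\lesssim\|\xi\|_{A[\un]}^2$, which---as you yourself note---is the delicate step: it requires the regularization cap on $\vphi''$ and brings in a constant of order $(\epsilon_+/\epsilon_-)^{p-2}$. The paper's Hilbert-space decomposition incurs no such loss and yields the specific contraction factor $1-\theta/(\Cj+1)$ with $C_H=M_L/(\theta M_A)$ directly, whereas your ``careful accounting of constants'' would produce a factor of the same type but with an extra $M_C$-dependence hidden in $C'$.
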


\begin{proof}

By Assumption \ref{asm:cnH}, we have  
\begin{equation}
C_nA[\un](\wnH, \wnH) \geqs (\Cj+1)\int_{\Omega}\kappa(x)\vphi''\left(|\nabla \un|+|\nabla \wnH|\right)|\nabla \wnH|^2,
\label{ineq:upperBoundQuasinorm-homo}
\end{equation}

Combining Lemma \ref{lem:bdist}, \eqref{eqn:direction-wnH}, \eqref{eqn:errhom}, and  \eqref{ineq:upperBoundQuasinorm-homo}, we derive the following inequalities,
\begin{displaymath}
\begin{aligned}
\J(\un)-\J(\un+\wnH) & \geqs C_n A[\un](\wnH, \wnH) - \Cj\int_{\Omega}\kappa(x)\vphi''\left(|\nabla \un|+|\nabla \wnH|\right)|\nabla \wnH|^2\\
&  \geqs \frac{C_n}{\Cj+1} A[\un](\wnH, \wnH) \\
& = \frac{C_n}{\Cj+1} A[\un](\wn_h, \wn_h) + \frac{C_n}{\Cj+1} A[\un](\wn_h-\wnH, \wn_h-\wnH) \\
& \geqs  \frac{C_n}{\Cj+1} A[\un](\wn_h, \wn_h)-\frac{1}{(\Cj+1)C_n\lambda_{\min}(A[\un])}\|\J'(\un)\|_{L^2_h}^2 H^2  \\
& \geqs \frac{1}{\Cj+1} (\J(\un)-\J(\un+\wn_h)) - \frac{M_L}{(\Cj+1) M_A}  H^2.
\end{aligned}
\end{displaymath}

Let $e^{(n+1)}_H:=\J(\un+ \wnH)-\J(u)$, and $\tilde{e}^{(n+1)}:=\J(\un + \wn_h)-\J(u)$, by Theorem \ref{thm:explicit}, we have
\begin{displaymath}
        e^{(n)}_H-e^{(n+1)}_H  \geqs  \frac{1}{\Cj+1}(e^{(n)}_H-\tilde{e}^{(n+1)})- \frac{M_L}{(\Cj+1) M_A}  H^2  \geqs \frac{\theta}{\Cj+1}e^{(n)}_H- \frac{M_L}{(\Cj+1) M_A} H^2. 
\end{displaymath}
It follows by rearranging terms that,
\begin{displaymath}
(1-\frac{\theta}{C_{\J}+1})(e^{(n)}_H-\frac{ M_L}{\theta M_A} H^2)  \geqs  (e^{(n+1)}_H-\frac{ M_L}{\theta M_A} H^2),
\end{displaymath}
namely, the energy error decays exponentially up to order $O(H^2)$, which concludes the theorem.
\end{proof}

We summarize the iterated numerical homogenization method in the following algorithm.
\begin{algorithm}
	\caption{Iterated Numerical Homogenization}
	\begin{algorithmic}			
		\STATE{\textbf{STEP 1}: Initialize
            $u^{(0)}$, $n=0$, and tolerance $\varepsilon>0$.}\\		
        \textbf{STEP 2}: Construct a coarse space $\V{n}{H}:= {\rm span}\{\phin{n}{i},i=1,...,N_{H}\}$, where each $\phin{n}{i}$ is obtained by solving the minimization problem \eqref{eqn:variationalbasis} associated with the quadratic form $A[\un]$.\\
        \textbf{STEP 3}: Find the search direction $\wnH$ by solving  \eqref{eqn:direction-wnH} for either the PGD direction in \eqref{eqn:N-func-direction-sd} or the Newton's direction in \eqref{eqn:N-func-direction-newton}.\\
		\textbf{STEP 4}: Update $u^{(n+1)}:=\un+\wnH$.\\
		\textbf{STEP 5}:
		If $(\J(\un)-\J(u^{(n+1)}))/|\J(u^{(n)})|< \varepsilon$, \textbf{STOP}; else goto \textbf{STEP 2} and set $n:=n+1$.
	\end{algorithmic}
	\label{alg:inh}
\end{algorithm}

\subsection{Residual Regularization}
\label{sec:methods:reg}

As we mentioned in previous sections, the term 
\begin{displaymath}
\J'(\un) := -\DIV\left(a(x, \nabla \un)\right)+f = -\DIV\left(\kappa(x)\vphi'(|\nabla \un|)/|\nabla \un| \nabla \un\right) + f
\end{displaymath} 
only has $W^{-1,\vphi}$ regularity for general $\un\in\Wphi$. 
However, the approximation error in \eqref{eqn:errhom} depends on $ \|\J'(\un)\|_{L_h^2}^2$, which may blow up as $h\to 0$.
This can be observed from the following Figure \ref{fig:localbehavior}. Let  $\displaystyle \mathcal{E}_n(\alpha) := \J\left(\un+\alpha \wnH\right)$ and $\mathcal{R}_n(\alpha) := \|\J^{\prime}(\un+\alpha \wnH)\|_{L_h^2}^2$.  We show the local behavior of $\mathcal{E}_n(\alpha)$ and $\mathcal{R}_n(\alpha)$ in Figure \ref{fig:localbehavior}. In this example, we take the multiscale trigonometric example from \ref{sec:numerics:mstrig}, and $\wnH$ is obtained by Newton's method \eqref{eqn:N-func-direction-newton}. $\mathcal{E}_n(\alpha)$ attains its minimum at $\alpha \sim 1$, but the residual $\mathcal{R}_n(\alpha)$ has already blown up before this point. 
\begin{figure}[H]
	\centering
	{\includegraphics[width=0.45\textwidth]{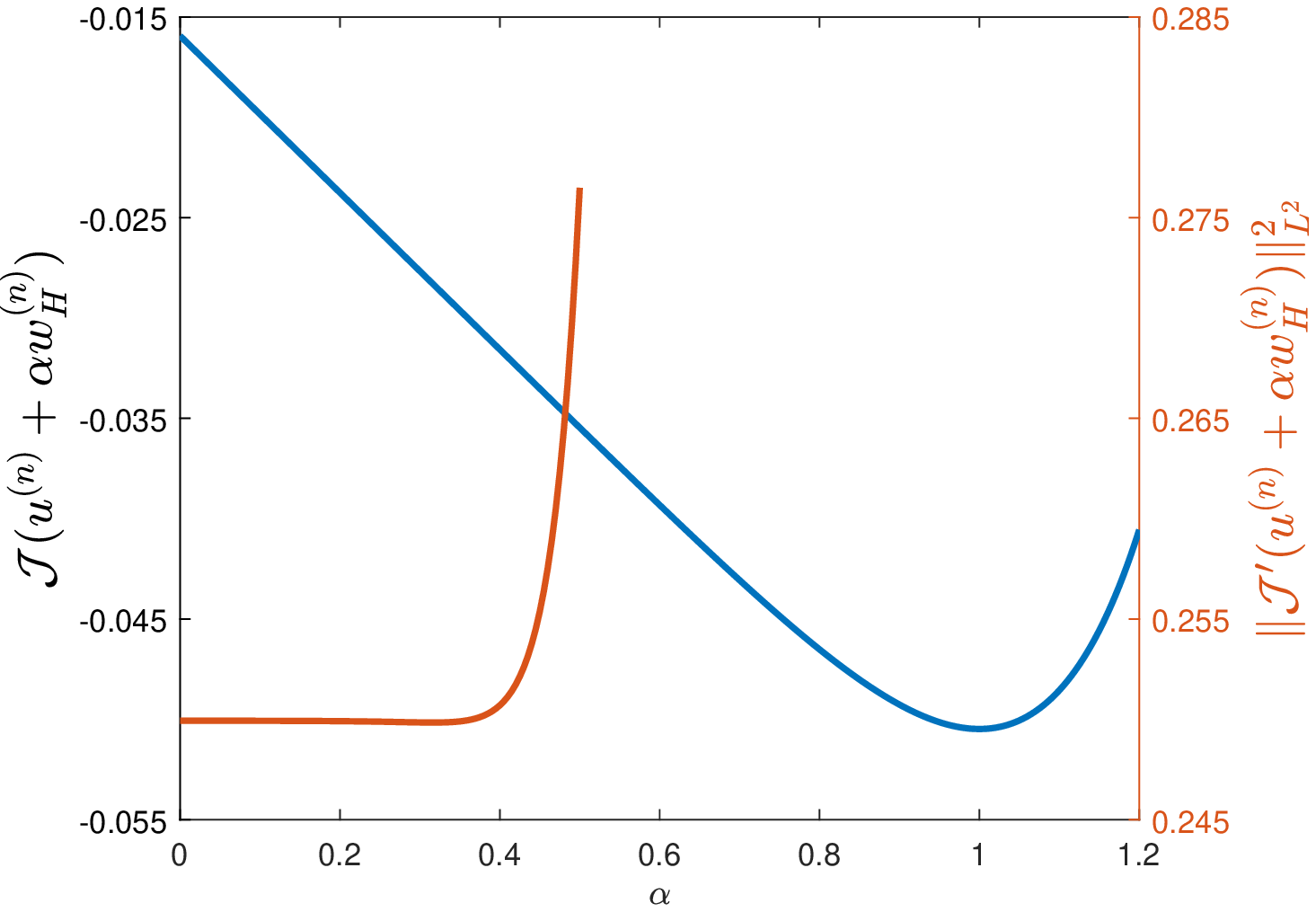}}
	\caption{Descent behavior of $\J\left(\un+\alpha_n \wnH\right)$ and  $\|\J^{\prime}(\un+\alpha_n \wnH)\|_{L^2}^2$.}
	\label{fig:localbehavior}
\end{figure}

In order to control the residual term $\|\J^{\prime}(\un)\|_{L_h^2}^2$ during iterations, we can prescribe an upper bound $L>0$, and relax the \textbf{STEP 4} of the Algorithm \ref{alg:inh} as, 

\textbf{STEP 4'}: Choose $0<\alpha_{n}\leqs 1$, such that 
\begin{displaymath}
    \alpha_n := \left\{
        \begin{array}{cc}
            1, & \quad \|\J'(\un+\wn_H)\|_{L_h^2} \leqs M_L;\\
            \argmin_{0<\alpha<1} \|\J'(\un+\alpha\wn_H)\|_{L_h^2}, &\quad \text{else}.
        \end{array}
    \right. 
\end{displaymath}
update $u^{(n+1)}:=\un+\alpha_{n} \wnH$. 

Alternatively, we can take $\|\J^{\prime}(\un+\alpha \wnH)\|_{L_h^2}^2$ as a penalty to regularize the energy minimization, and adopt the following line search formulation,

\textbf{STEP 4'}: Choose $\alpha_{n},$ such that 
$
\alpha_{n}:=\argmin _{\alpha \geqs 0} \J\left(\un+\alpha \wnH\right)+\lambda_n {\|\J^{\prime}(\un+\alpha \wnH)\|_{L_h^2}^2}
$, and update $u^{(n+1)}:=\un+\alpha_{n} \wnH$. 

A possible choice for the penalty parameter is $\lambda_n := \mathcal{E}_n'(0)/ |\mathcal{R}_n'(0)| $, such that the energy decrease and the residual change are comparable.

The $L^2$ residual regularization requires additional computational cost. To switch on/off the residual regularization, we propose the following indicator 
\begin{equation}
\rho^{(n+1)}:=\frac{\J(\un+ \alpha_n \wnH)-\J(\un)-\left<\J'(\un),\alpha_n \wnH\right>}{\left<\J'(\un),\alpha_n \wnH\right>}.
\label{eqn:rho}
\end{equation}
following the trust region idea \cite{Nocedal:2006}. If $\rho^{(n+1)}$ is close to 1/2, it indicates that $\J(\un+\alpha\wnH)\sim \J(\un)+\alpha  \left<\J'(\un), \wnH\right> + C(\alpha_n) \alpha^2/2 $ , then we can drop the residual regularization for the following iterations. 

We summarize the method the following Algorithm \ref{alg:inhrrls}.
\begin{algorithm}[H]
	\caption{Iterated Numerical Homogenization with Residual Regularized Line Search}
	\begin{algorithmic}			
		\STATE{\textbf{STEP 1}: Initialize
            $u^{(0)}$, $n=0$, $\I_{reg}=1$, tolerance $\varepsilon$, and a threshold $0.5<\delta<1$.} \\		
        \textbf{STEP 2}: Construct a coarse space $\V{n}{H}:= {\rm span}\{\phin{n}{i},i=1,...,N_{H}\}$, where each $\phin{n}{i}$ is obtained by solving the constrained minimization problem \eqref{eqn:variationalbasis} associated with the quadratic form $A[\un]$.\\
        \textbf{STEP 3}: Find the search direction $\wnH$ by solving  \begin{equation}
            A[\un](\wn, v) = -\J'(\un)(v), \text{ for } \forall v \in \V{n}{H}, 
        \end{equation}
        for either the PGD direction in \eqref{eqn:N-func-direction-sd} or the Newton's direction in \eqref{eqn:N-func-direction-newton}.\\
		\textbf{STEP 4.1}: \IF{$\I_{reg}=1$}
		                    \STATE Line search for  
		                    $$
		                    \alpha_{n} :=\argmin _{\alpha \geqs 0} \J\left(\un+\alpha \wnH\right)+\lambda_n\|\J^{\prime}(\un+\alpha_n \wnH)\|_{L_h^2}^2,
		                    $$ where $\lambda_n :=  \mathcal{E}_n'(0)/|\mathcal{R}_n'(0)|$.  Calculate $\rho^{(n+1)}$ in \eqref{eqn:rho}. \\
		                    \IF{$\rho^{(n)}\leqs\delta$}
		                    \STATE Set $\I_{reg}:=0$.
		                    \ENDIF \\
		                    \ELSE 
		                    \STATE Line search for 
		                    $$\alpha_n := \argmin _{\alpha \geqs 0} \J\left(\un+\alpha \wnH\right).$$
		                    \ENDIF \\
		\textbf{STEP 4.2}: Update $u^{(n+1)}:=\un+\alpha_{n} \wnH$. 
		
		\textbf{STEP 5}:
		If $(\J(\un)-\J(u^{(n+1)}))/|\J(u^{(n)})|< \varepsilon$, \textbf{STOP}; else goto \textbf{STEP 2} and set $n:=n+1$.
	\end{algorithmic}
	\label{alg:inhrrls}
\end{algorithm}

\section{Numerical Experiments} 
\label{sec:numerics}

In this section, we validate our numerical methods with some numerical experiments. We use the N-function $\vphi(t) = t^p/p$ with different exponents $p$ to characterize the nonlinearity. In practice, we use the regularized N-functions, $\vphi_\epsilon(t)$ as introduced in Section \ref{sec:prelim:reg}, and the specific form will be given and further investigated in Section \ref{sec: Reg}. For the heterogeneous coefficient $\kappa(x)$, we use: (i) the oscillatory multi-scale trigonometric function in Section \ref{sec:numerics:mstrig}, and (ii) the high-contrast heterogeneous permeability field which contains several channels, in the SPE10 benchmark for reservoir simulation (\url{http://www.spe.org/web/csp/}), in Section \ref{sec:numerics:spe10}. In Section \ref{sec:numerics:sparseupdate}, we address the issue of sparse updating to save the computational cost. All the simulations are carried out in a desktop computer with Intel(R) Core(TM) i7-7700 CPU @ 3.60GHz using MATLAB 2020a.

\subsection{Multiscale Trigonometric Example} 
\label{sec:numerics:mstrig}

The multiscale trigonometric (mstrig) coefficient $\kappa(x)$ is given by,
\begin{eqnarray}
\kappa(x)&=&\frac{1}{6}\big(\frac{1.1+\sin(2\pi x/\epsilon_1)}{1.1+\sin(2\pi y/\epsilon_1)}+\frac{1.1+\sin(2\pi y/\epsilon_2)}{1.1+\cos(2\pi x/\epsilon_2)}+\frac{1.1+\cos(2\pi x/\epsilon_3)}{1.1+\sin(2\pi y/\epsilon_3)}+\nn\\
&&\frac{1.1+\sin(2\pi y/\epsilon_4)}{1.1+\cos(2\pi x/\epsilon_4)}+\frac{1.1+\cos(2\pi x/\epsilon_5)}{1.1+\sin(2\pi y/\epsilon_5)}+\sin(4x^2y^2)+1\big). \label{9.3}
\end{eqnarray}
where $\epsilon_1=1/5, \epsilon_2=1/13, \epsilon_3=1/17, \epsilon_4=1/31,\epsilon_5=1/65$. $\kappa(x)$ is highly oscillatory with non-separable scales.  We show $\kappa(x)$, and also $\kappa(x)|\nabla u(x)|^{p-2}$ for $p=5$, which is the coefficient for the nonlinear equation \eqref{eqn:nonlinear}, in Figure \ref{fig:trig}.
\begin{figure}[H]
	\centering
	\subfigure[coefficient $\kappa(x)$ with contrast $37.10$.]{
		\includegraphics[width=0.29\textwidth]{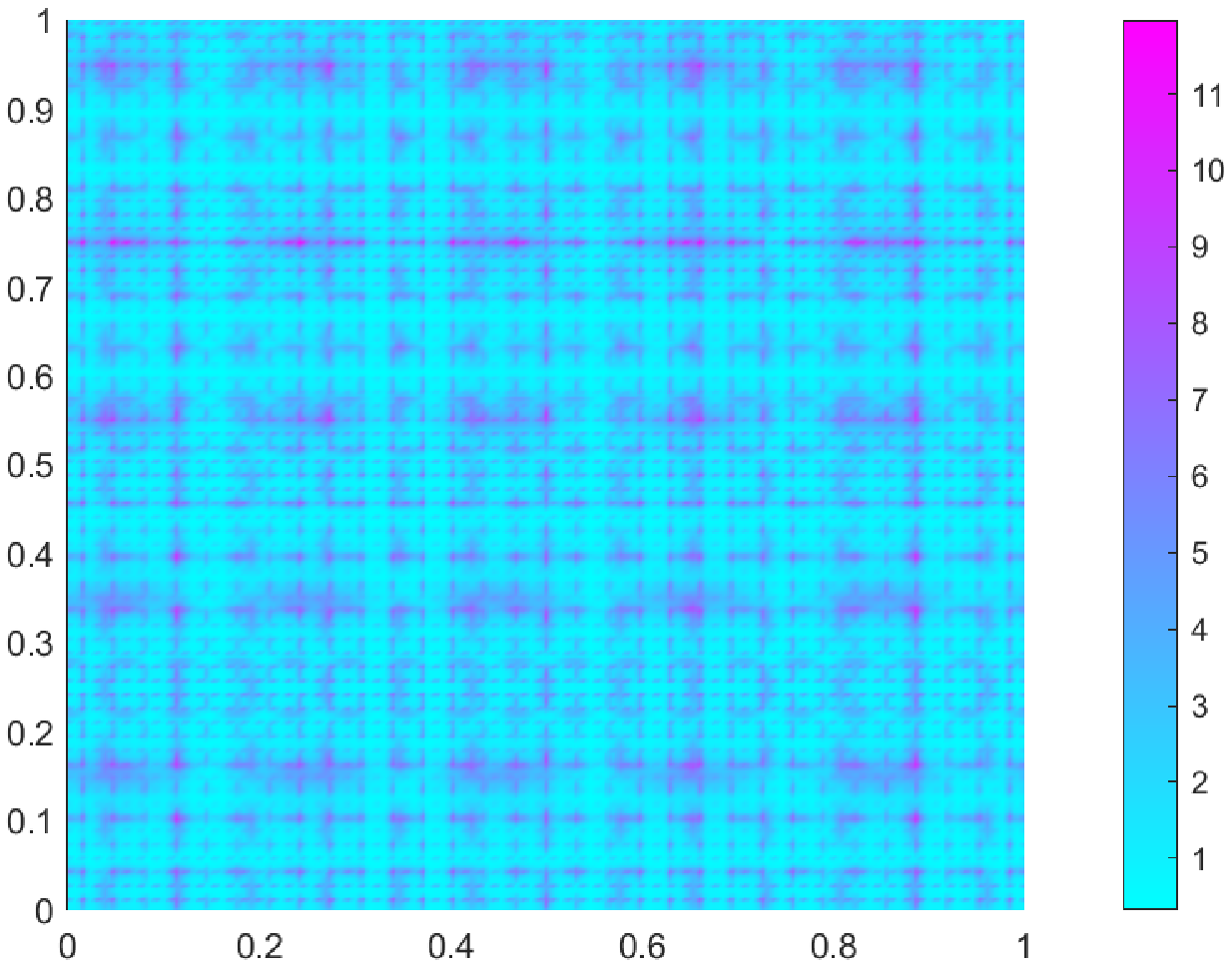}
	}
	\quad
	\subfigure[$|\nabla u|^{p-2}$, $\max(|\nabla u(x)|^{p-2})=0.8017$, $\min(|\nabla u(x)|^{p-2})=0$]{
		\includegraphics[width=0.29\textwidth]{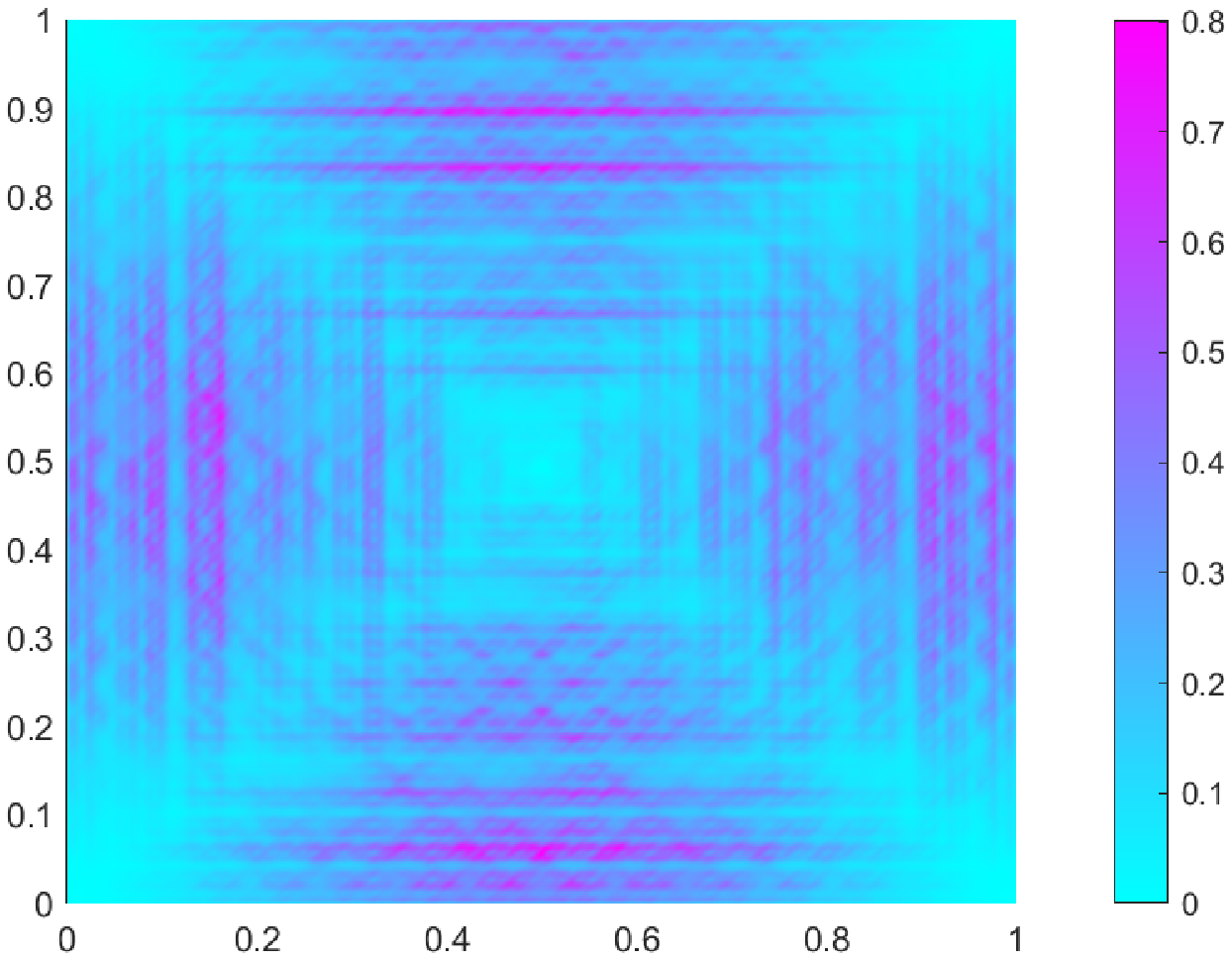}}
	\quad
	\subfigure[coefficient $\kappa(x)|\nabla u|^{p-2}$ for the nonlinear equation \eqref{eqn:nonlinear}, $\max(|\nabla u(x)|^{p-2})=1.722$, $\min(|\nabla u(x)|^{p-2})=0$]{
	\includegraphics[width=0.29\textwidth]{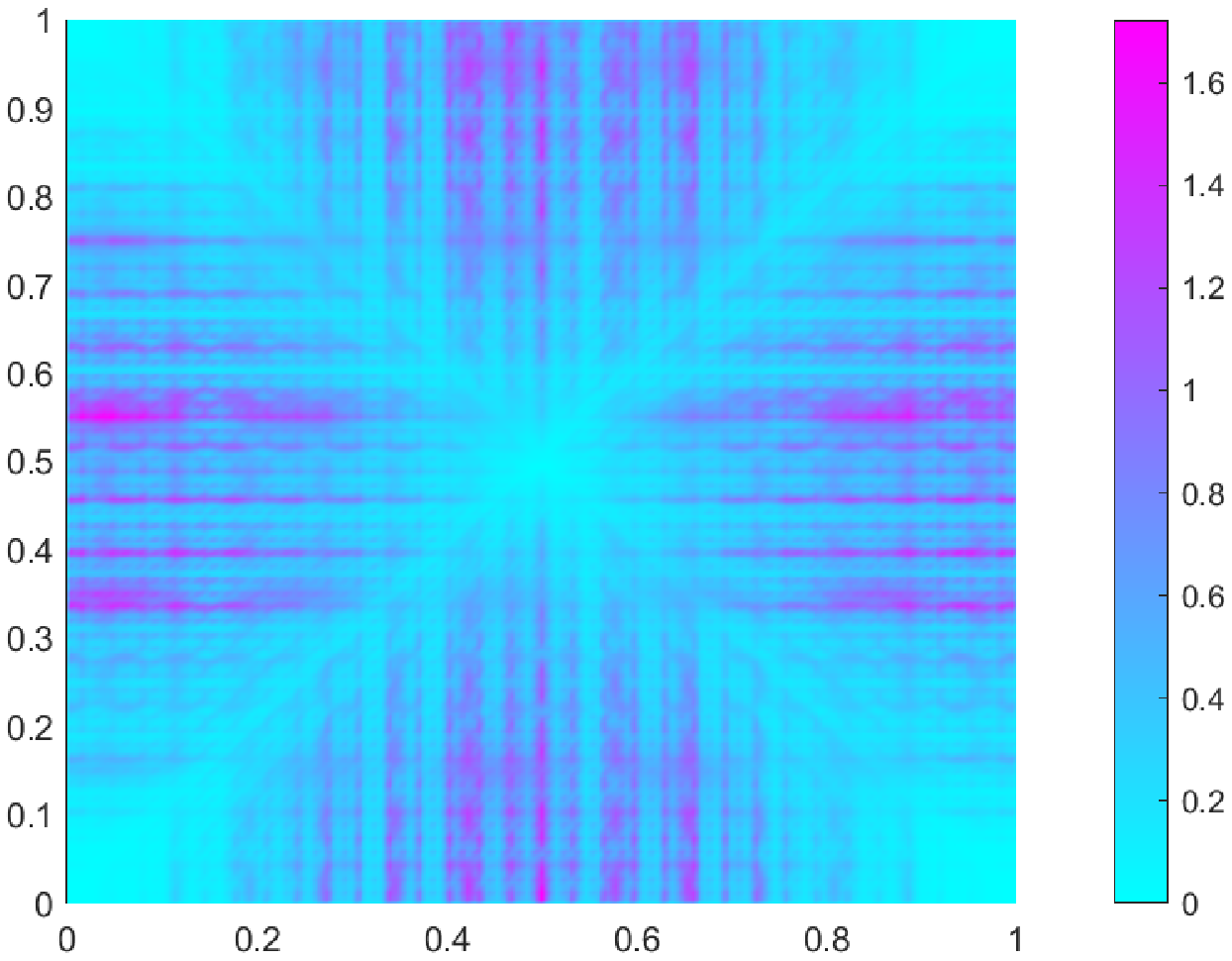}}
	\caption{coefficients $\kappa(x)$ and $\kappa(x)|\nabla u|^{p-2}$.}
	\label{fig:trig}
\end{figure}

We take the unit square $\Omega=[0,1]\times[0,1]$ as the computational domain. The coarse mesh $\mathcal{T}_H$ is obtained by first subdividing $\Omega$ uniformly into $\Nc\times \Nc$ squares, then partitioning each square into two triangles along the
$(1,1)$ direction. We can further refine the coarse mesh uniformly by dividing each triangle into four congruent subtriangles. We refine ${\mathcal{T}_H}$ $J$ times to obtain the fine mesh $\mathcal{T}_h$, with $h = 2^{-J} H$.  We refer to Figure \ref{fig:mesh} for an illustration of the partition. The degrees of freedom of the global GRPS basis with volume measurement functions are $N_H = 2N_c^2$. In the numerical experiment, we use a fixed fine mesh with $h=2^{-7}$ and coarse mesh sizes $H=2^{-2},\,2^{-3},\,2^{-4},\,2^{-5}$, respectively. 

\begin{figure}[H]
	\centering
	\subfigure[Coarse mesh, $N_c$=2]{
		\includegraphics[width=0.25\textwidth]{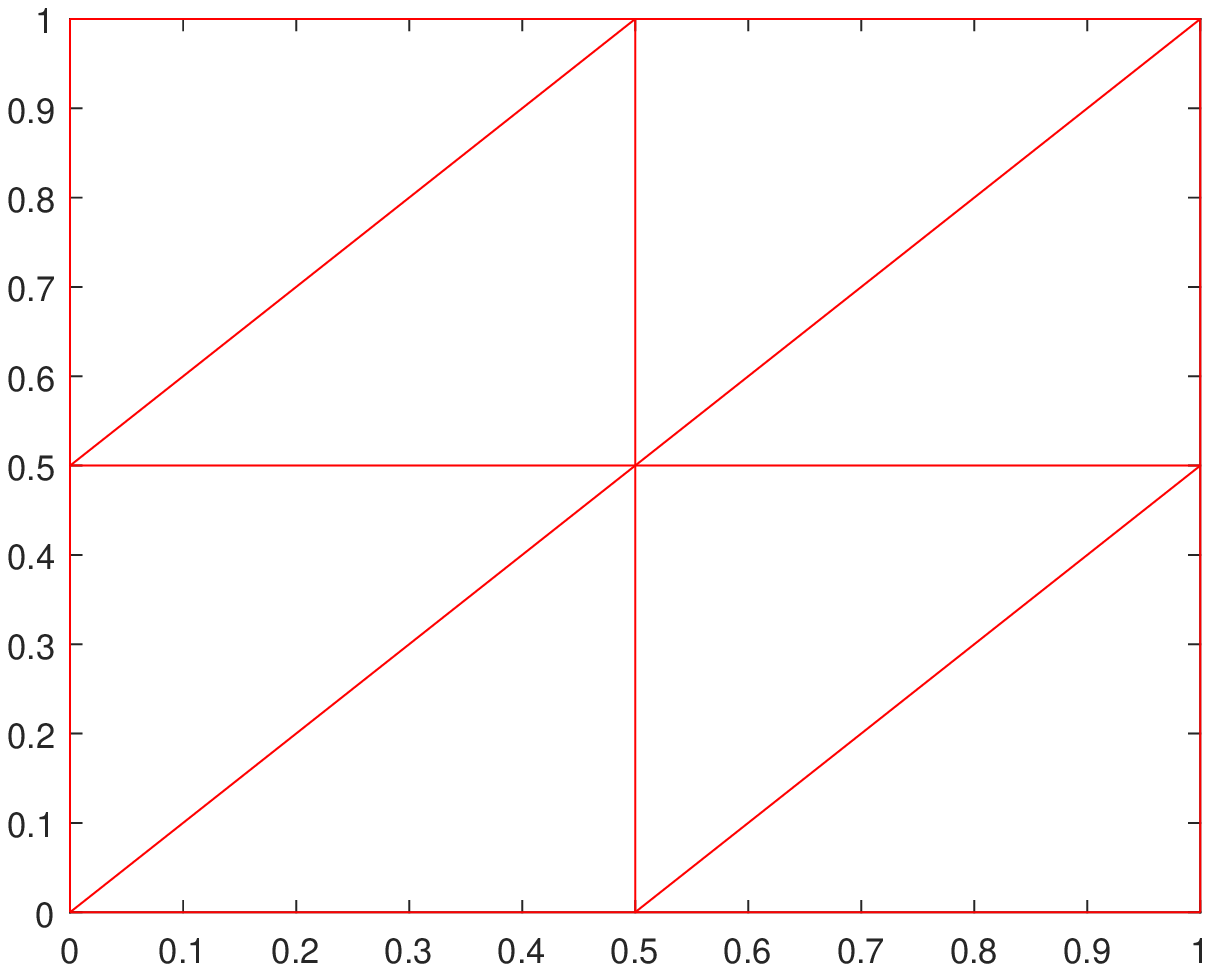}}
	\subfigure[Fine mesh, $N_c$=1,J=2]{
		\includegraphics[width=0.25\textwidth]{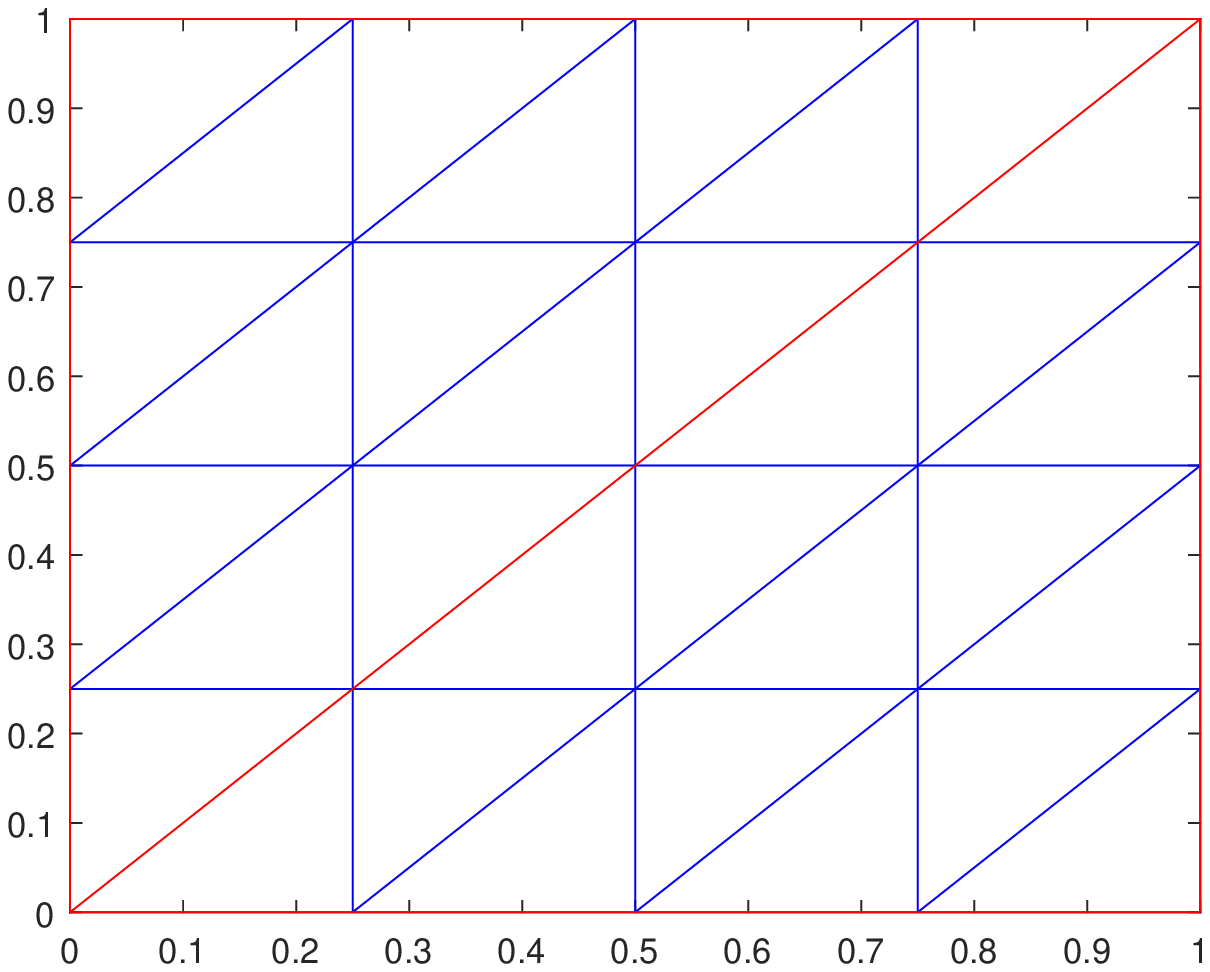}}
	\subfigure[Fine mesh, $N_c$=1,J=3]{
		\includegraphics[width=0.25\textwidth]{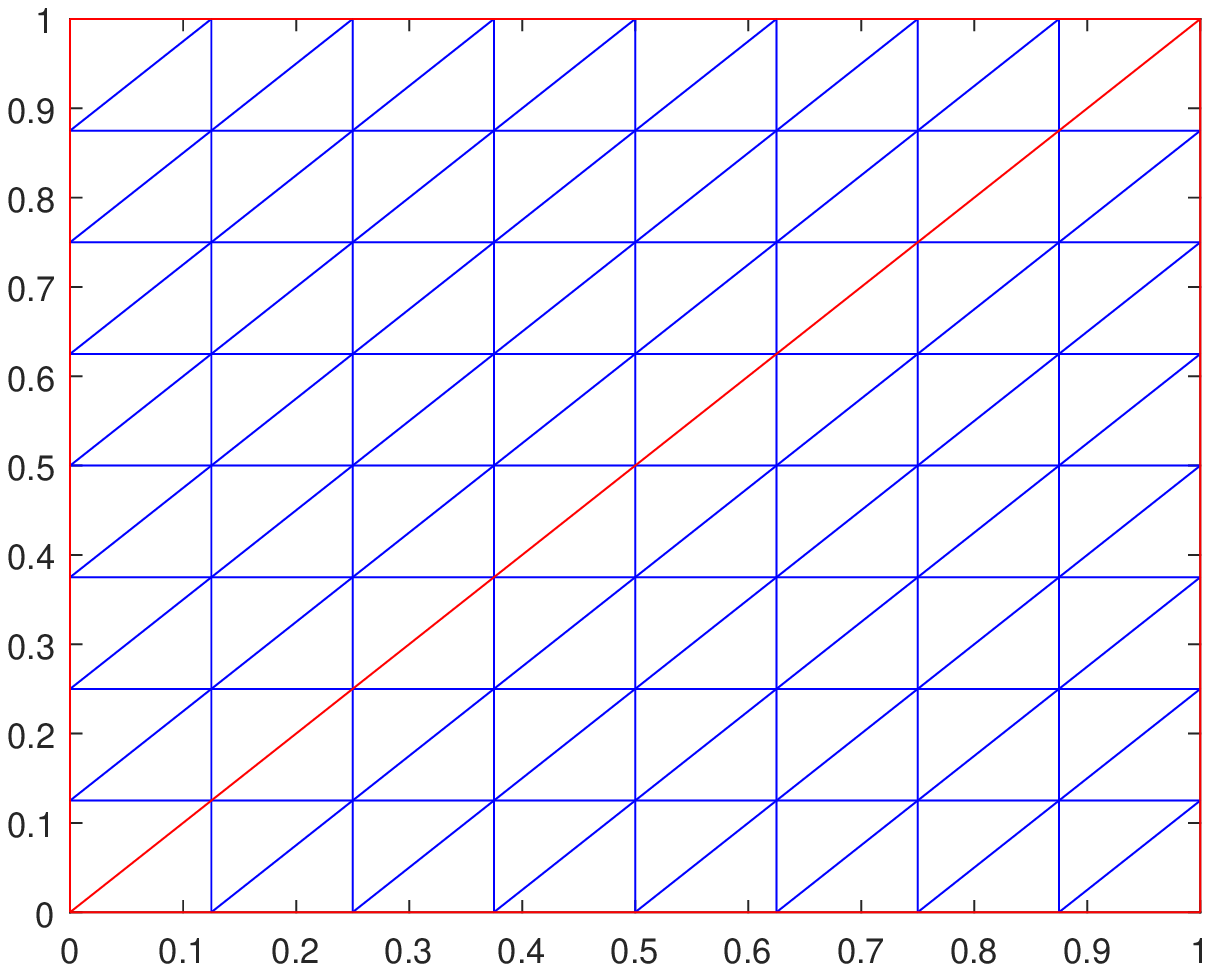}}
	\caption{Coarse and fine mesh of the unit square.}
	\centering
	\label{fig:mesh}
\end{figure}

We define the localized patch $\Omega_i^l$ by letting $\Omega_i^0:=T_i$ for coarse triangle $T_i\in \Tc$, and $\Omega_i^{l+1}: = \cup \{T\in \Tc: T\cap \bar\Omega_i^l \neq \emptyset\}$. We refer to Figure \ref{fig:patch} for an illustration of the patches $\Omega_i^l$ with $l=1,2,3$. 

\begin{figure}[H]
	\centering
	\subfigure[$\bar\Omega_i^1$]
	{\includegraphics[width=0.21\textwidth]{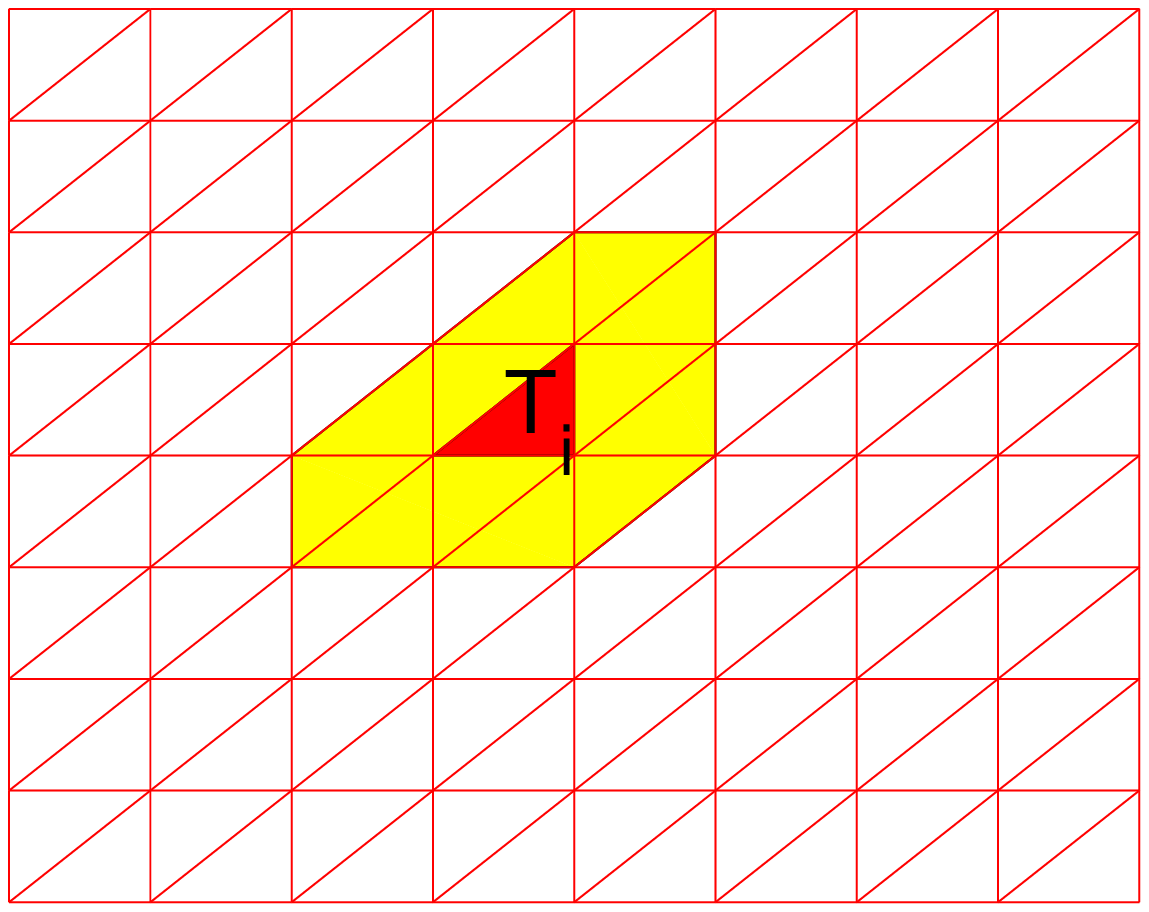}}
	\quad
	\subfigure[$\bar\Omega_i^2$]
	{\includegraphics[width=0.21\textwidth]{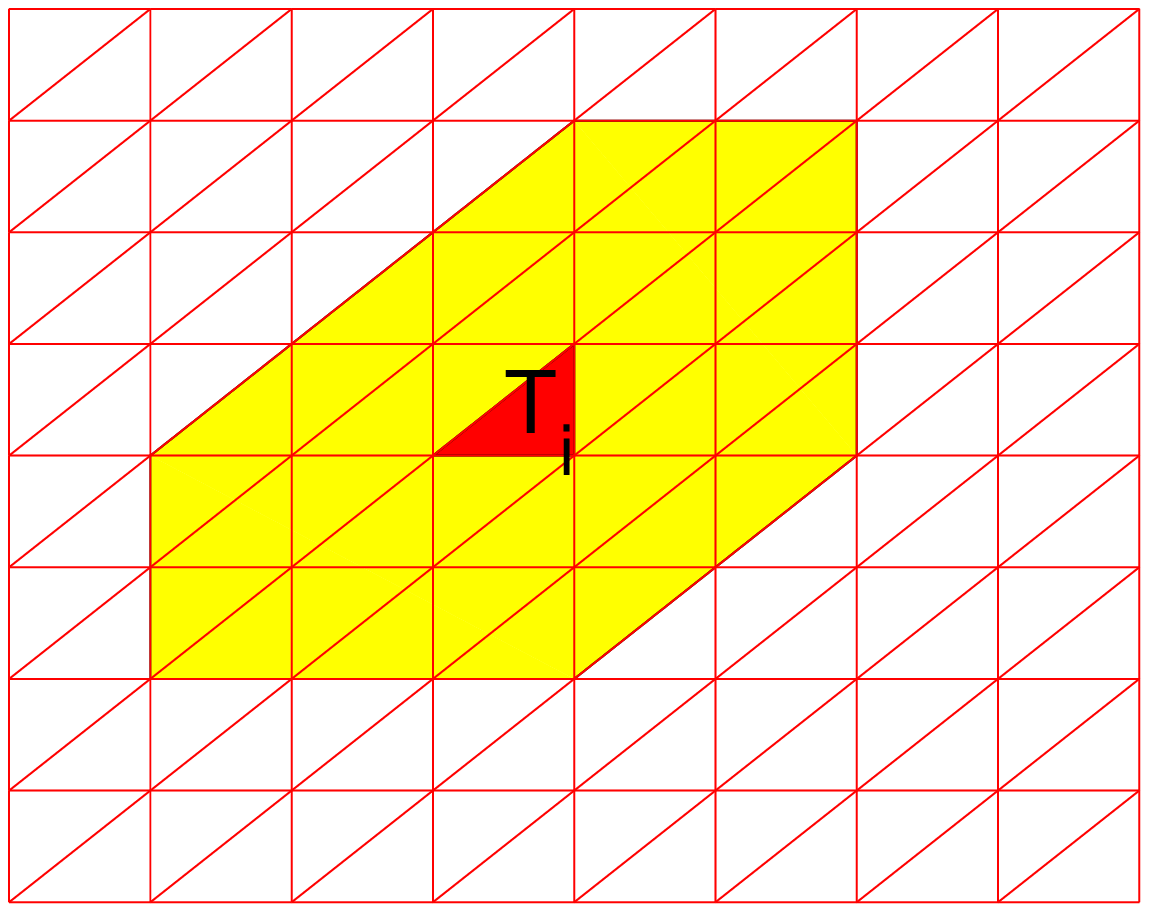}}
	\quad
	\subfigure[$\bar\Omega_i^3$]
	{\includegraphics[width=0.21\textwidth]{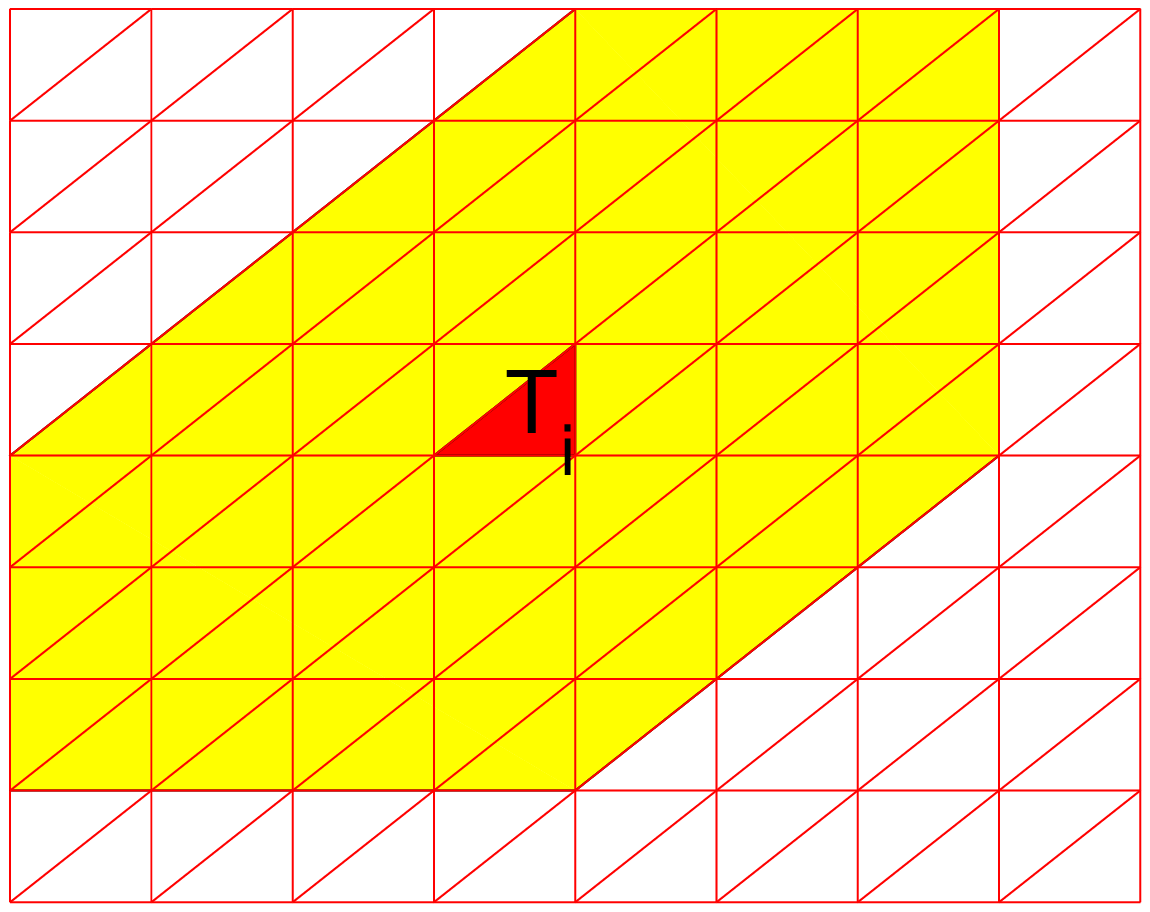}}
	\caption{Local patches for GRPS basis.}
	\centering
	\label{fig:patch}
\end{figure}

\subsubsection{Regularization error}
\label{sec: Reg}
We investigate the effect of regularization parameters $\epsilon$ in this section. In Figure \ref{fig:regerror}, we compare the regularization errors for the regularized models with the following N-functions $\vphi_{\epsilon}$, where $\vphi_{\epsilon,1}\in C^1$ is defined as,
\begin{equation}
		\vphi_{\epsilon,1}(t):=\left\{\begin{array}{ll}
		\frac{1}{2} \epsilon_{-}^{p-2} t^{2}+\left(\frac{1}{p}-\frac{1}{2}\right) \epsilon_{-}^{p} & \text { for } t \leqslant \epsilon_{-} \\
		\frac{1}{p} t^{p} & \text { for } \epsilon_{-} \leqslant t \leqslant \epsilon_{+} \\
		\frac{1}{2} \epsilon_{+}^{p-2} t^{2}+\left(\frac{1}{p}-\frac{1}{2}\right) \epsilon_{+}^{p} & \text { for } t \geqslant \epsilon_{+}
		\end{array}\right.
		\label{eqn:regvphi}
\end{equation}

and a smoother $\vphi_{\epsilon,2}\in C^2$ is defined as, 
\begin{equation}
		\vphi_{\epsilon,2}(t):=\left\{\begin{array}{ll}
		\frac{1}{p}\epsilon_-^{p-2} t^2+ \frac{p - 2}{p^2 + 2p} \epsilon_-^{-2}t^{p+2} -\frac{p - 2}{p (p + 2)} \epsilon_-^p & \text { for } t \leqslant \epsilon_{-} \\
		\frac{1}{p} t^{p} & \text { for } \epsilon_{-} \leqslant t \leqslant \epsilon_{+} \\
		\frac{p(p-1)}{2}\epsilon_+^{p-2} t^2+ (2-p) \epsilon_+^{p-1}t -\frac{p^2-3p+2}{2p} \epsilon_+^p & \text { for } t \geqslant \epsilon_{+}.
		\end{array}\right.
		\label{eqn:regvphi2}
\end{equation}

\begin{figure}[H]
	\centering
	\subfigure[Energy error]{\includegraphics[width=0.4\textwidth]{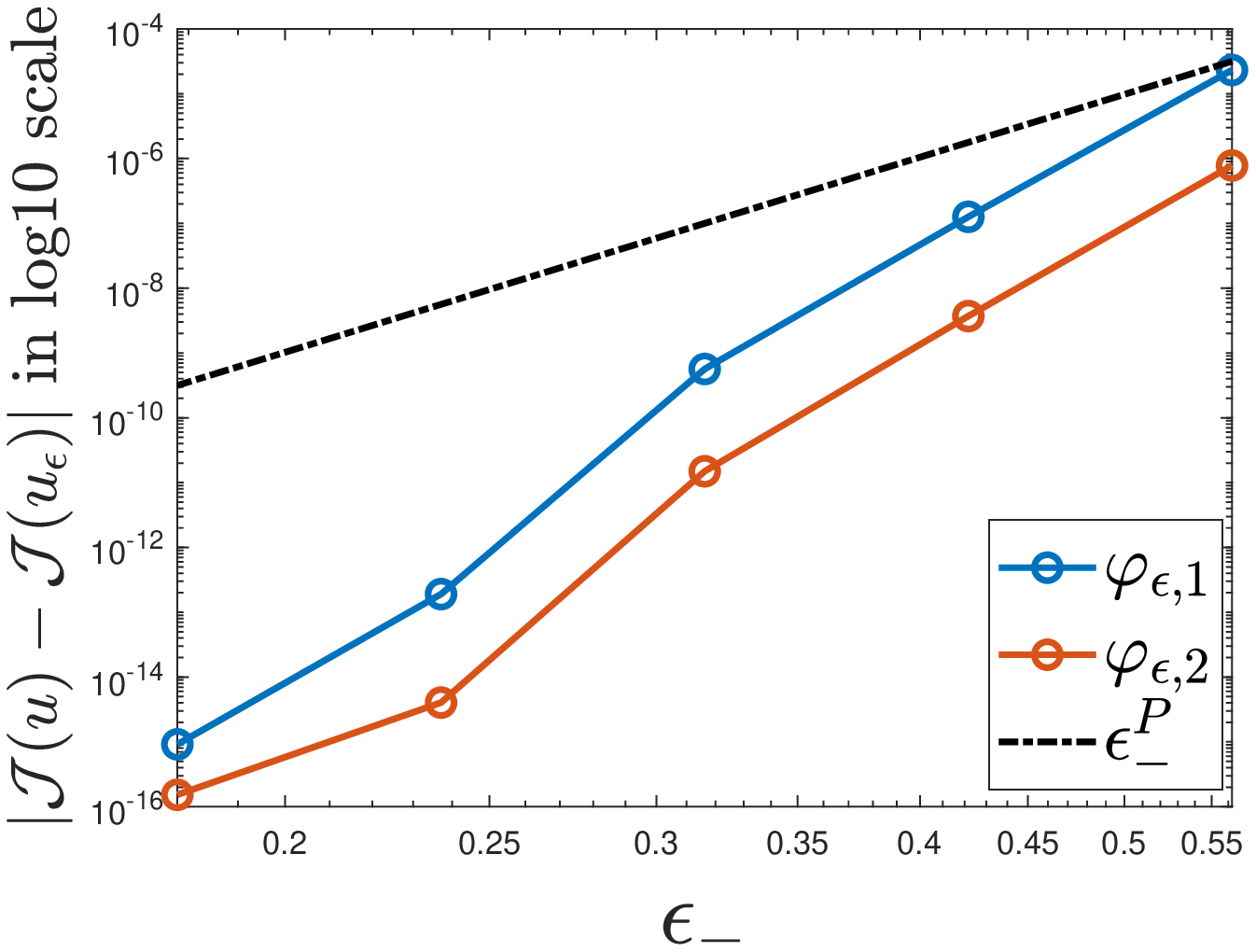}\label{fig:regModelerror}}
	\subfigure[$W^{1,p}$ error]{\includegraphics[width=0.4\textwidth]{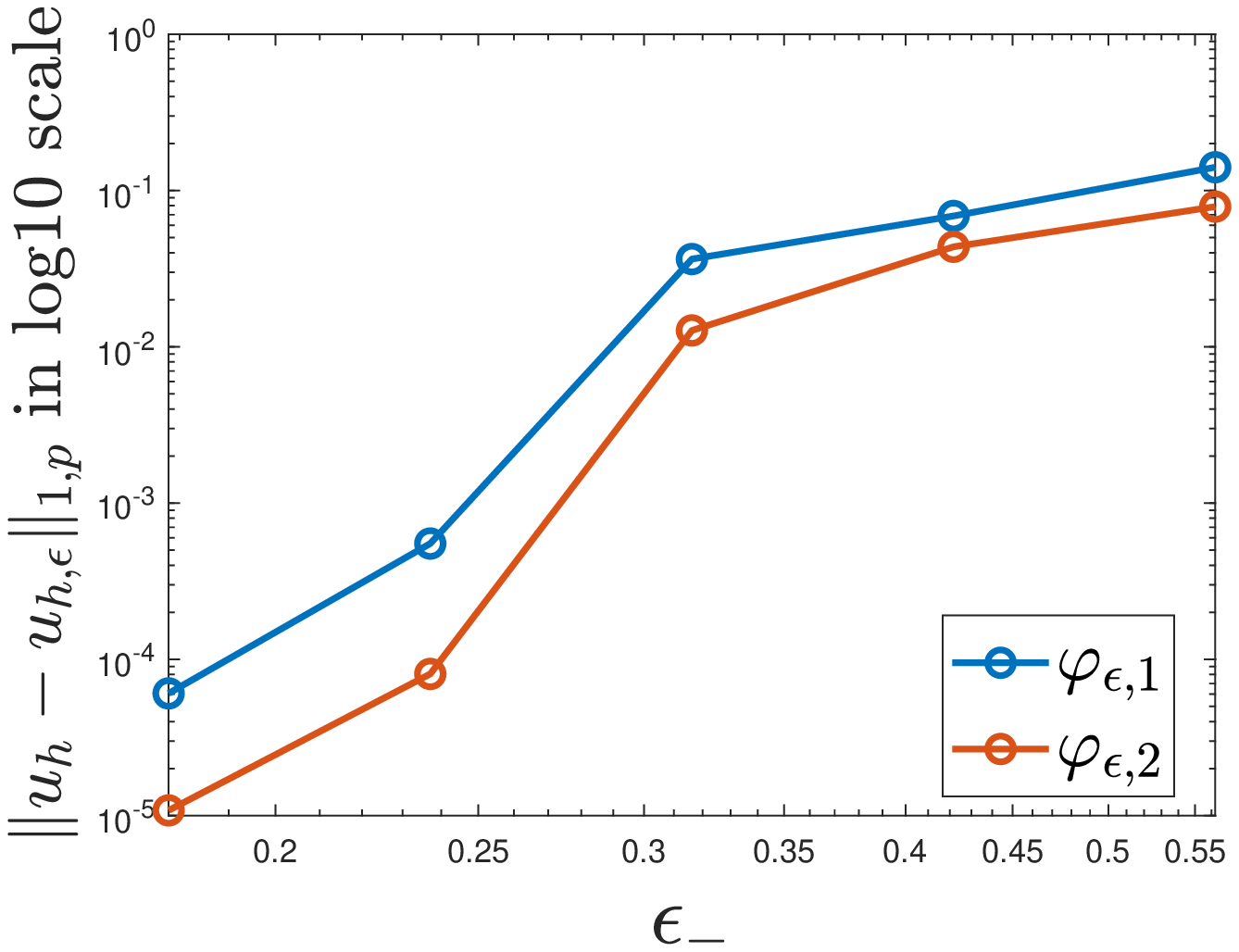}\label{fig:regModelerror2}}
	\caption{Regularization errors for $p=10$.}
	\label{fig:regerror}
\end{figure}

The fine mesh solutions for the original model and regularized models are discretized using piecewise linear finite elements, and solved by Matlab built-in  function \emph{fminunc} (trust region) with first optimality condition below $10^{-7}$. 

From our numerical results in Figure \ref{fig:trig} and Figure \ref{fig:regerror}, it seems for $p\geqs 2$, the coefficients $\kappa(x) |\nabla u|^{p-2}$ can be degenerate but still bounded from above. Therefore, only the regularization parameter $\epsilon_-$ from below is effective in this case. The regularization error has faster decay than $\epsilon_-^p$ postulated in Assumption \ref{asm:regularization}. We observe that when $\epsilon_-^{p-2} \simeq 10^{-6}$, the error in energy has reached the level of about $10^{-15}$. 

In the following numerical studies, we simply use the regularized model with $\epsilon_-^{p-2}=10^{-6}$ which guarantees sufficiently small regularization error as well as the well-posedness of the iterative scheme. For simplicity, we drop the subscript $\epsilon$ from the quantities such as $\J_\epsilon$, $u_\epsilon$, etc. 

\subsubsection{Comparison of iterative methods}

We compare different iterative methods on fine mesh in Figure \ref{fig:Implicit-Explicit}: the implicit quasi-norm based method, the gradient descent method (abbreviated as GD, $w_{GD}^{n} = \Delta^{-1}\J'(\un)$), the preconditioned gradient descent (PGD) method and Newton's method. We consider the regularized $\J_{\epsilon}$ with $\epsilon_-^{p-2}=10^{-6}$. The stopping criterion is $(\J(u^{(n)})-\J(u^{(n+1)}))/|\J(u^{(n)})|\leqs 10^{-15}$. We use the Matlab build-in function \emph{fminunc} for the line search. We choose the initial guess $u^{(0)}$ as the solution of the Poisson's equation with the coefficient $\kappa(x)$ unless otherwise specified. 

\begin{figure}[H]
    \centering
    \includegraphics[width=0.45\textwidth]{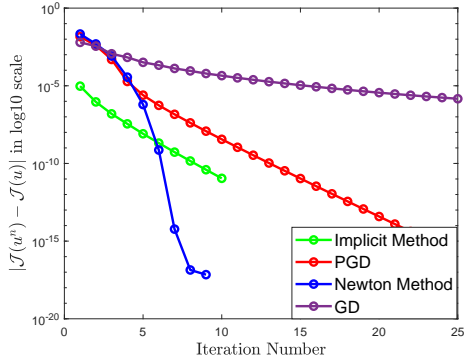}
    \caption{Comparison of iterative methods, $p=5$.  }
    \label{fig:Implicit-Explicit}
\end{figure}

As shown in Figure \ref{fig:Implicit-Explicit}, the implicit quasi-norm based method has the best start and exponential convergence in  energy as expected by Theorem \ref{thm:quasinorm}, and Newton's method achieves best convergence rate as a second order method. PGD has similar convergence rate as the implicit method, which seems to be independent of the heterogeneous coefficient $\kappa(x)$. The gradient descent method suffers from the heterogeneity of $\kappa(x)$, and converges very slow.
Therefore, an optimal iterative method may combine a starting step with the implicit quasi-norm based method and the consequent iterations with Newton's method.


\subsubsection{Iterated numerical homogenization and residual regularization}
We present the implementation details and numerical results for the $L^2$ residual regularized iterated numerical homogenization in Section \ref{sec:methods:reg}.  Here, we fix the fine mesh size $h=2^{-7}$, the coarse mesh size $H=2^{-4}$, and $p=10$. We implement PGD and Newton's method on both the coarse mesh and the fine mesh. We take the tolerance $\varepsilon=10^{-15}$ and the threshold $\delta = 0.68$ in Algorithm \ref{alg:inhrrls}.

As shown in Figure \ref{fig:resreg}(a), the residual regularized iterated homogenization converges, while the iterated homogenization without residual regularization (green curve in \ref{fig:resreg}(a)) does not converge because of the residual behavior shown in Figure \ref{fig:localbehavior}. The convergence of the coarse mesh solution is slower than its fine mesh counterpart due to the error of the search direction $\wnH$, as well as the residual regularization. For the iterated homogenization with residual regularization, the error of the coarse mesh solution will eventually saturate due to the $O(H^2)$ homogenization error expected by Theorem \ref{thm:inh}. In Figures \ref{fig:resreg}(b)(c), the indicator $\rho_n$ approaches $0.5$ and the penalty parameter $\lambda_n$ decreases during the first 7 or 8 iterations, as a consequence, the residual regularization switches off thereafter. We also plot numerical estimates of $C_n$ and $C_n'$ in Figure \ref{fig:resreg}(d) as a numerical verification of Assumptions \ref{asm:cn} and \ref{asm:cnH}. $C_n$ is estimated by the solution $\widetilde{C}_n$ of the following nonlinear equation,
\begin{displaymath}
 \widetilde{C}_nA[\un](w_0^{(n)},w_0^{(n)}) = \int_{\Omega}\kappa(x)\vphi''\left(|\nabla \un|+\frac{1}{\widetilde{C}_n}|\nabla w_0^{(n)}|\right)|\nabla w_0^{(n)}|^2.
\end{displaymath}
where $w_0^{(n)}$ is given by $A[\un](w_0^{(n)}, v) = -\J'(\un)(v), \text{ for } \forall v \in \Vh$. $\widetilde{C}'_n$ is defined similarly. We observe that both $\widetilde{C}_n$ and $\widetilde{C}'_n$ have an upper bound controlled by the contrast of the linearized operators $A[\un]$, and decays to $O(1)$ as $n$ grows. 

\begin{figure}[H]
	\centering	
	\subfigure[Energy error ]{\includegraphics[width=0.38\textwidth]{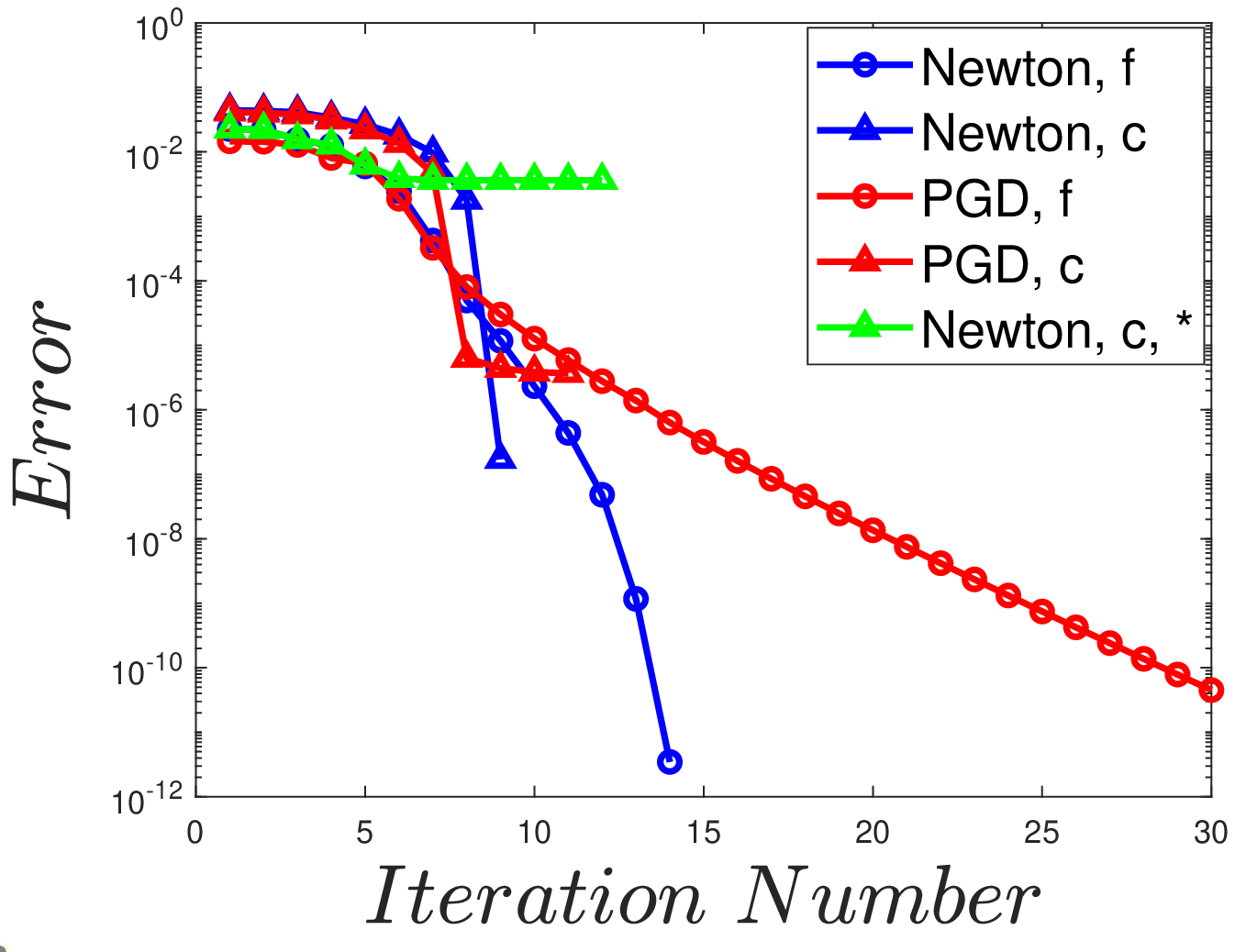}}
	\quad
	\subfigure[$\rho_n$]{\includegraphics[width=0.38\textwidth]{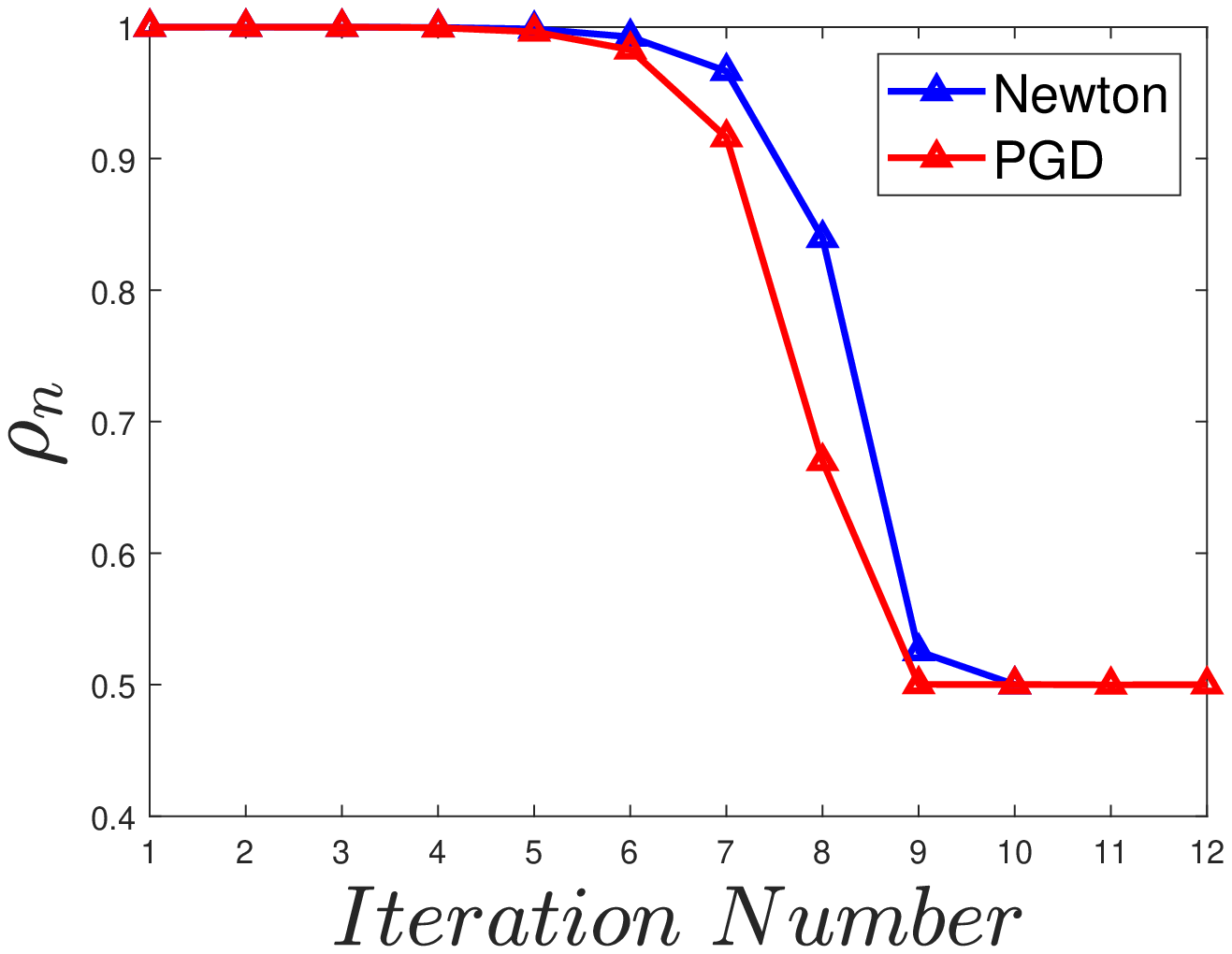}}
	\subfigure[$\lambda_n$]{\includegraphics[width=0.38\textwidth]{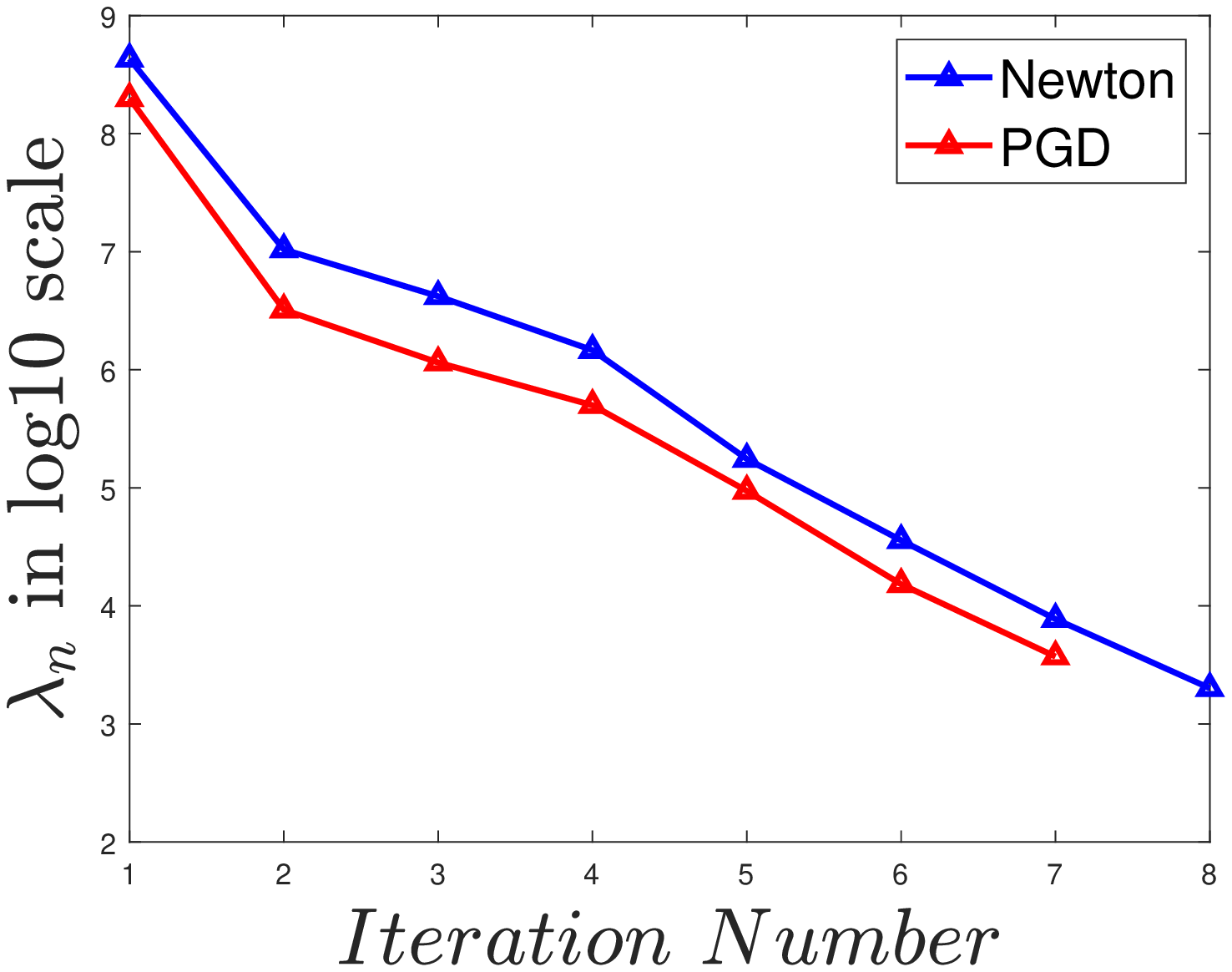}}
	\quad
	\subfigure[$\widetilde{C}_n$ and $\widetilde{C}'_n$]{\includegraphics[width=0.38\textwidth]{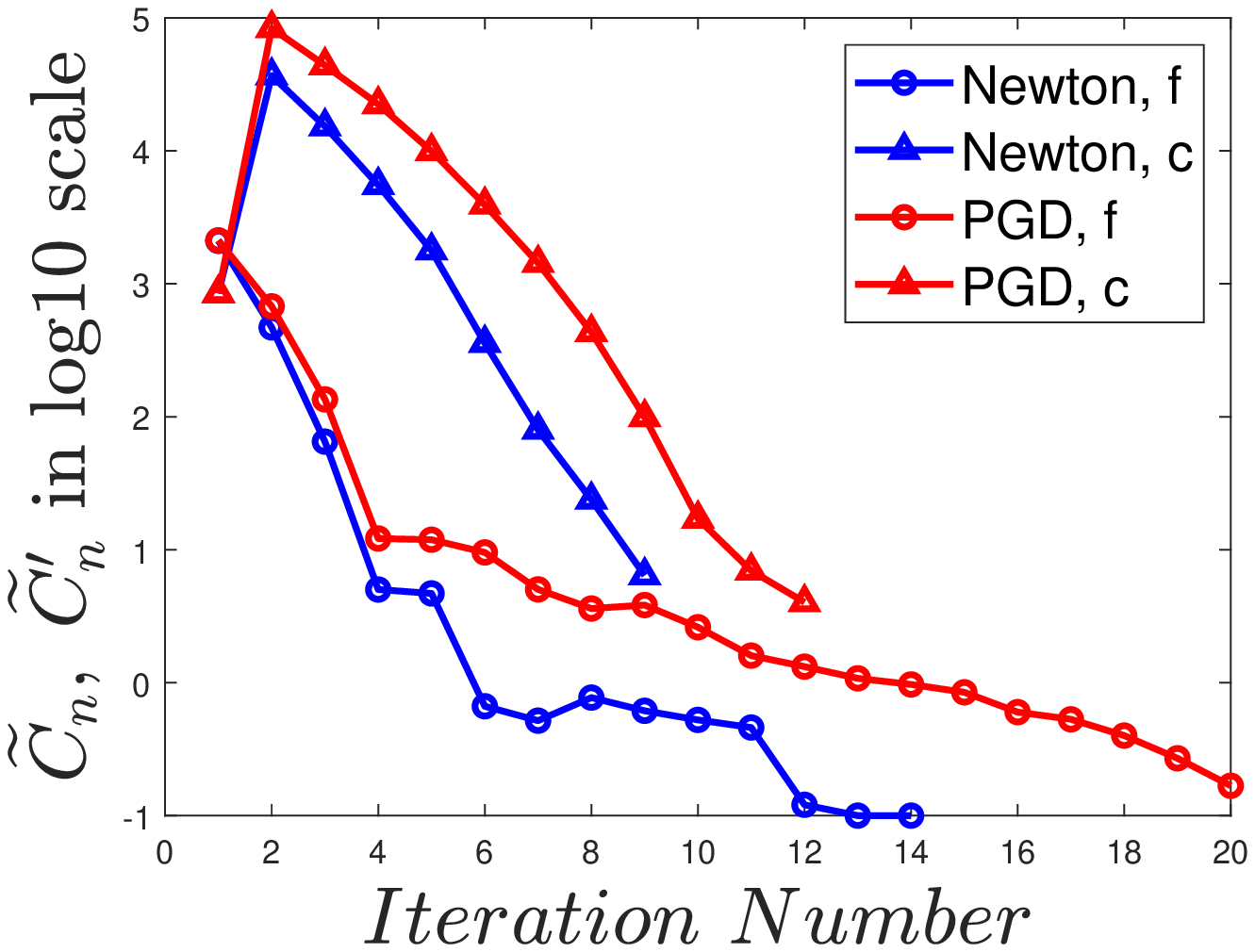}}
	\caption{Energy convergence, residual regularization indicator $\rho_n$, and penalty parameter $\lambda_n$. In the legend of (a), the letter 'c' denotes coarse mesh solution, 'f' denotes fine mesh solution; in (d), 'c' denotes $\widetilde{C}_n$, and 'f' denotes 
	$\widetilde{C}'_n$.}
	\label{fig:resreg}
\end{figure}

\subsubsection{Homogenization error}

We demonstrate the numerical homogenization errors of coarse mesh solutions in Figure \ref{fig:coarseerrorpgd} and Figure \ref{fig:coarseerrornewton}, with respect to the coarse degrees of freedom, for PGD and Newton methods, respectively. We show the $H^1$ error, the $W^{1,p}$ error, and the energy error of $u_{H}$ with respect to the minimizer $u_{h}\in \Vh$. In this example, we use a fixed fine mesh with $h=2^{-7}$ and varying coarse mesh sizes $H=2^{-2},\,2^{-3},\,2^{-4},\,2^{-5}$, respectively.

\begin{figure}[H]
	\centering
	\subfigure[$H^1$ error ]{\includegraphics[width=0.3\textwidth]{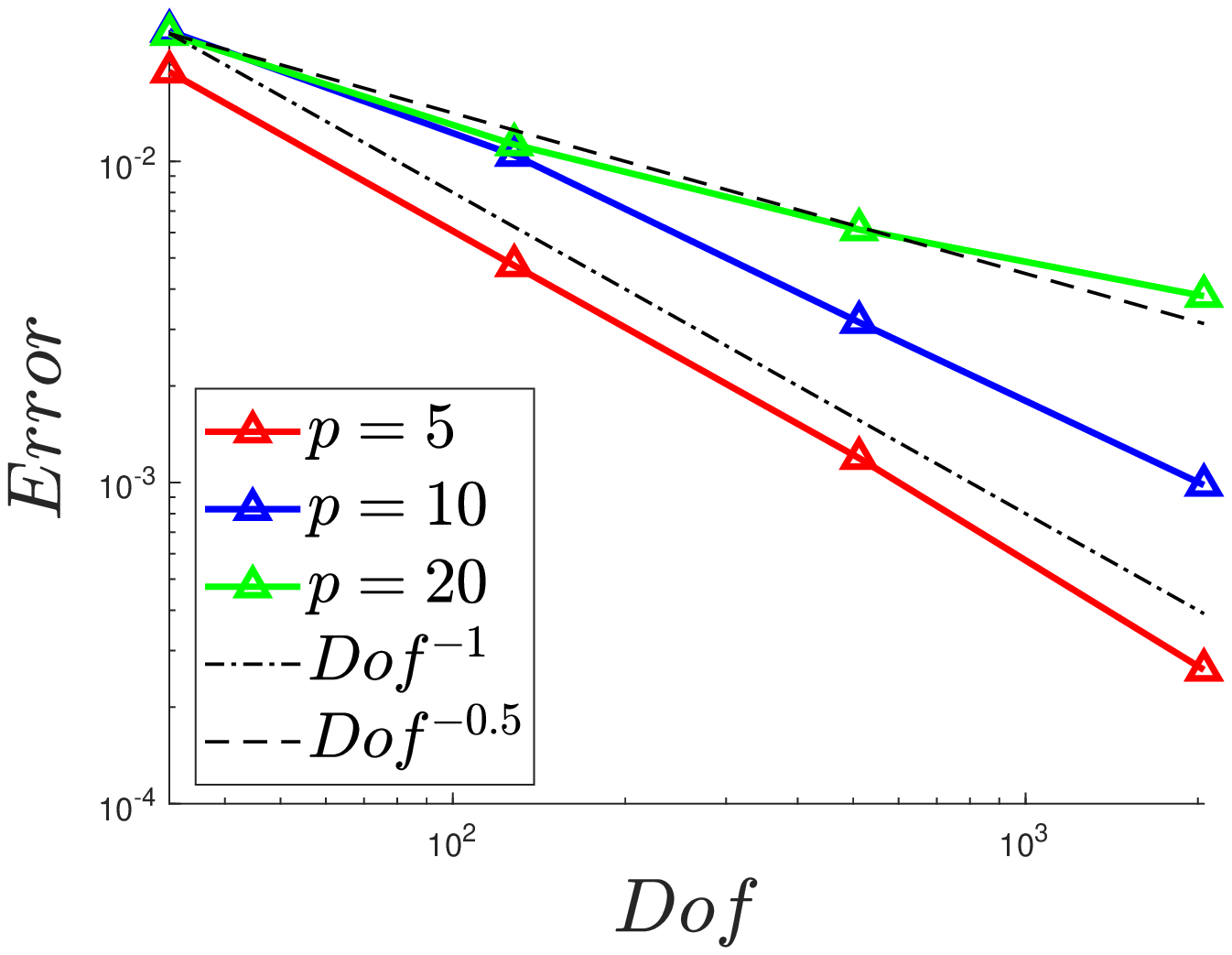}}
	\subfigure[$W^{1,p}$ error]{\includegraphics[width=0.3\textwidth]{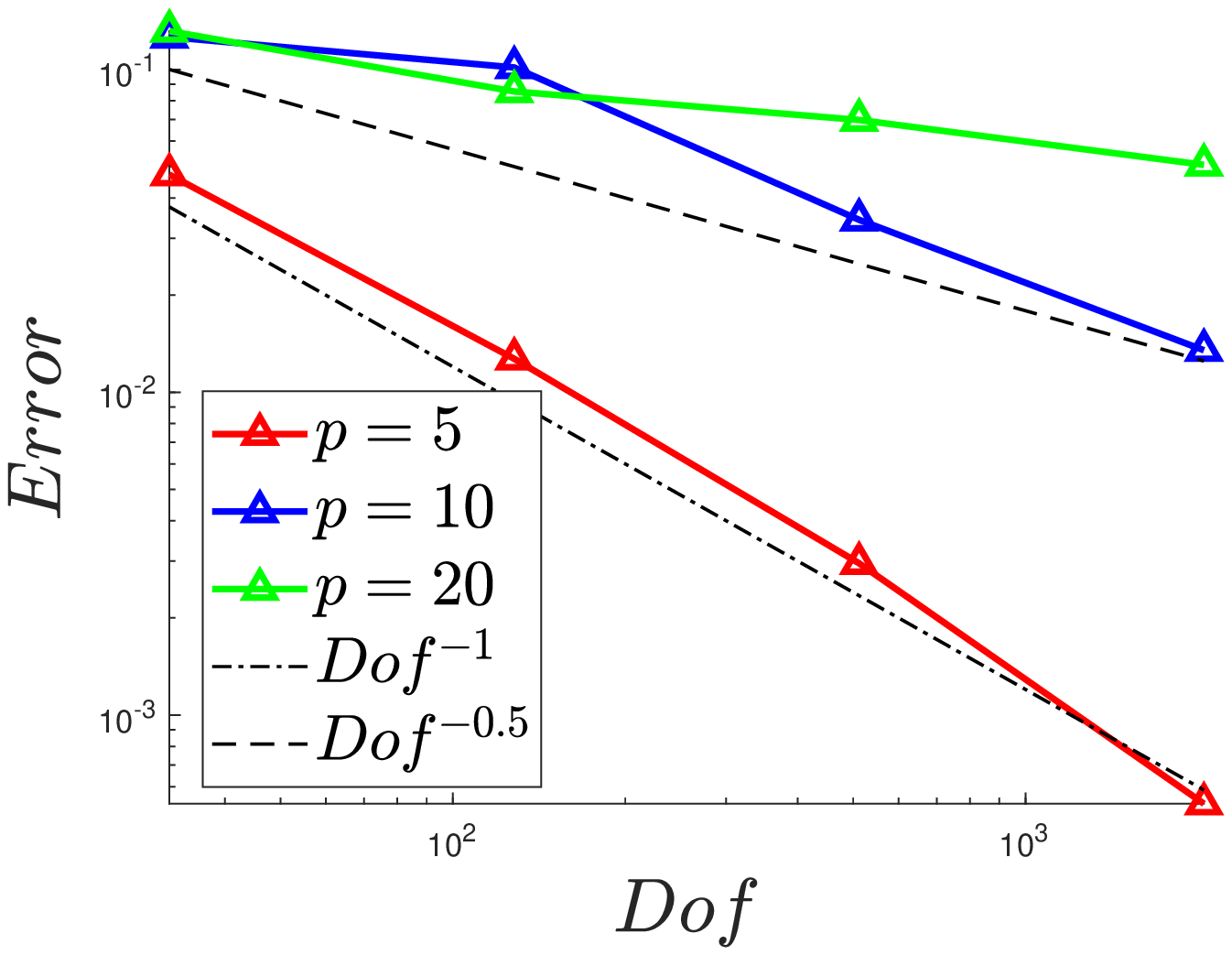}}
	\subfigure[energy error]{\includegraphics[width=0.3\textwidth]{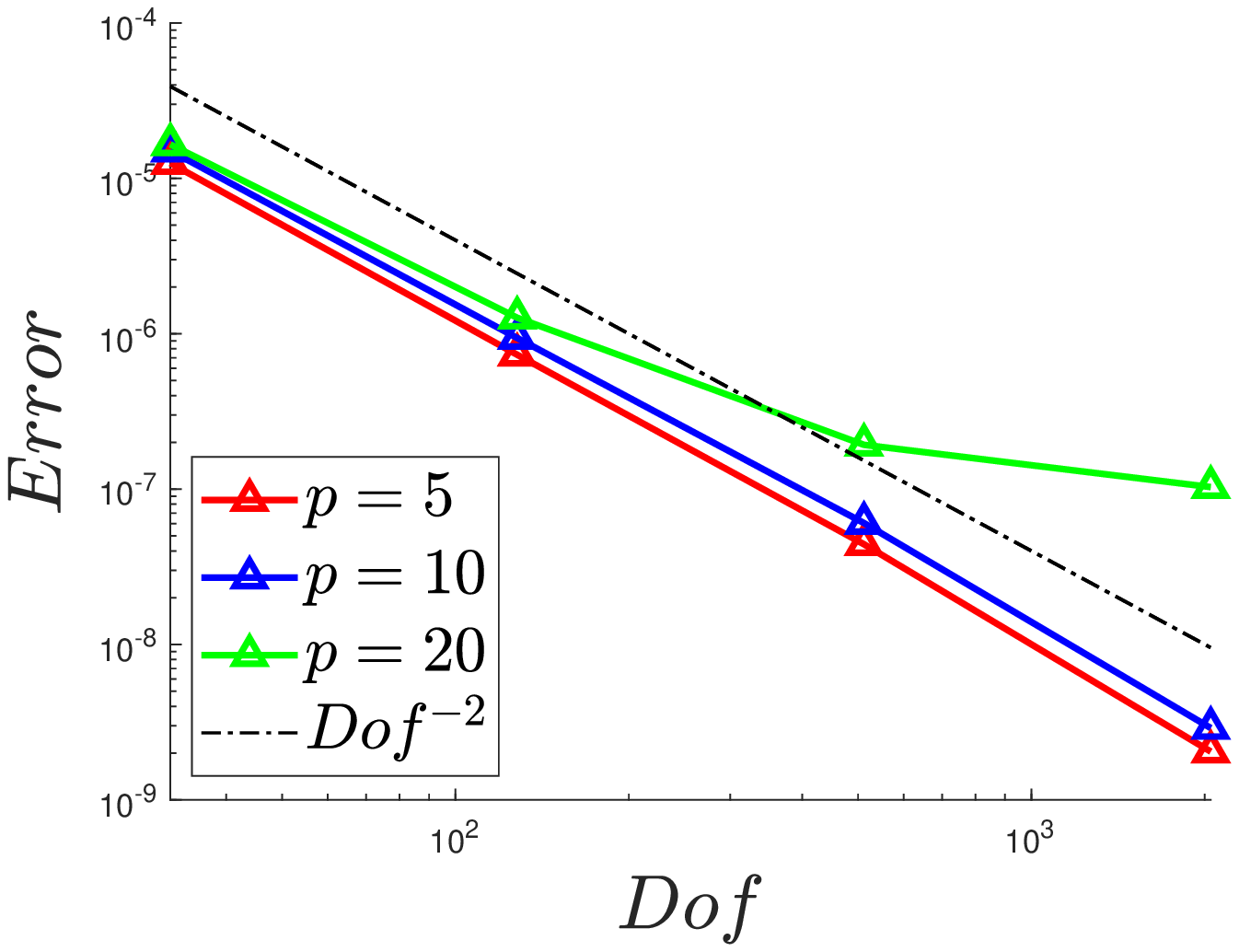}}
	\caption{Homogenization error for the preconditioned gradient descent method and the regularized model $\J_{\epsilon}$ with $\epsilon_-^{p-2}=1e-6$, $f=\sin(\pi x)\sin(\pi y)$, and $p=5,10,20$, respectively. $u^{(0)}$ is $0.5u_h$, where $u_h$ is the fine mesh reference solution.}
	\label{fig:coarseerrorpgd}
\end{figure}

\begin{figure}[H]
	\centering
	\subfigure[$H^1$ error]{\includegraphics[width=0.3\textwidth]{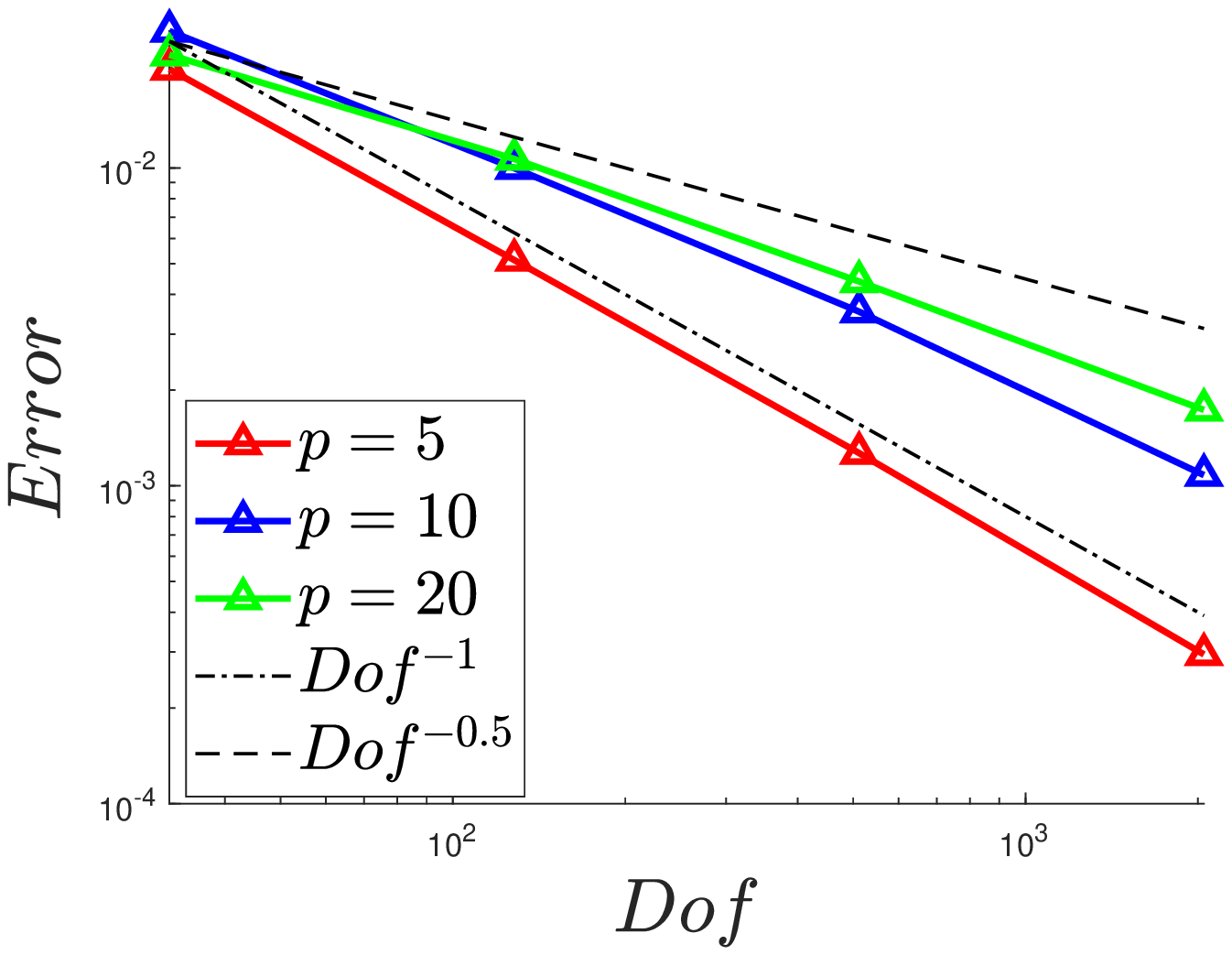}}
	\subfigure[$W^{1,p}$ error]{\includegraphics[width=0.3\textwidth]{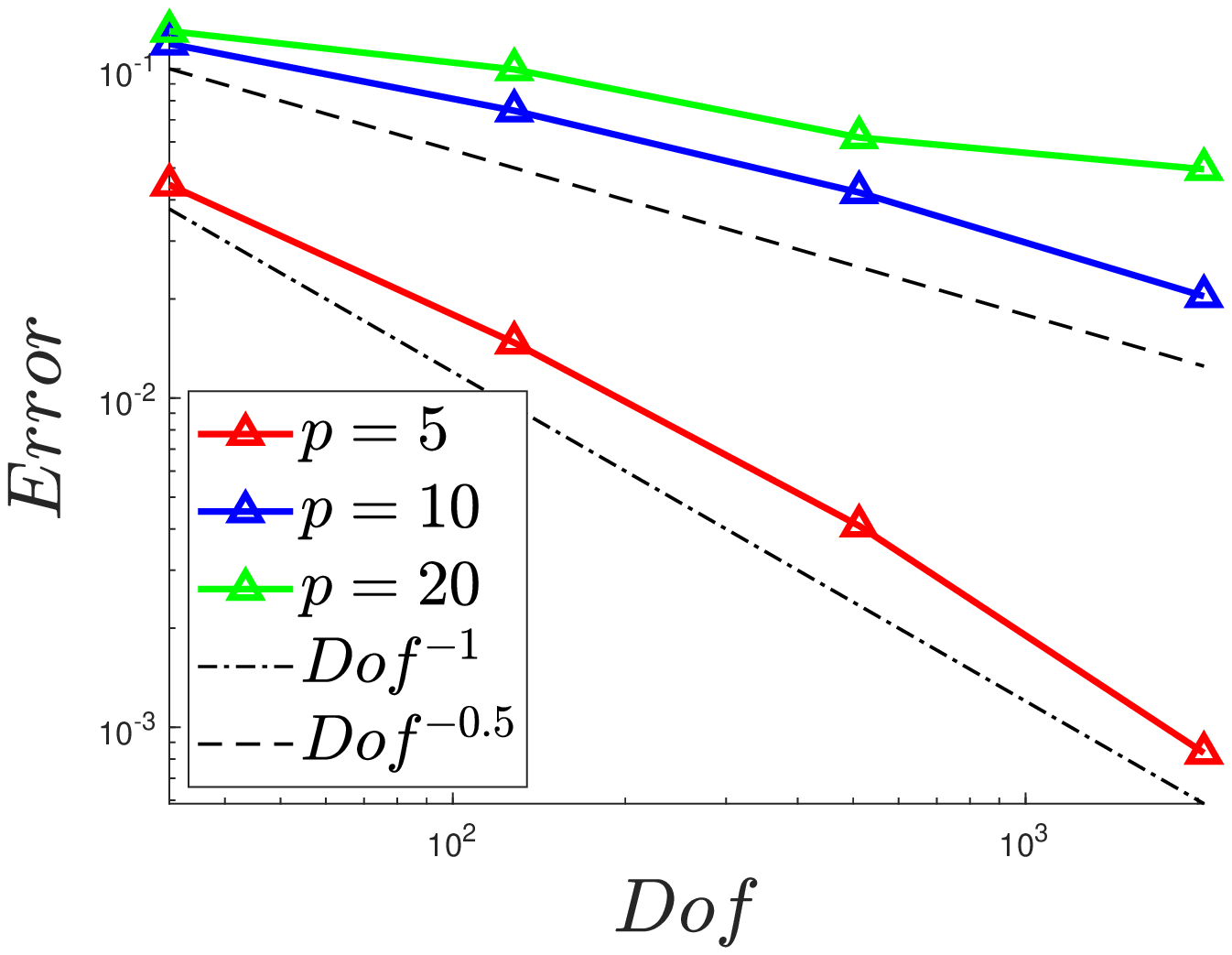}}
	\subfigure[energy error]{\includegraphics[width=0.3\textwidth]{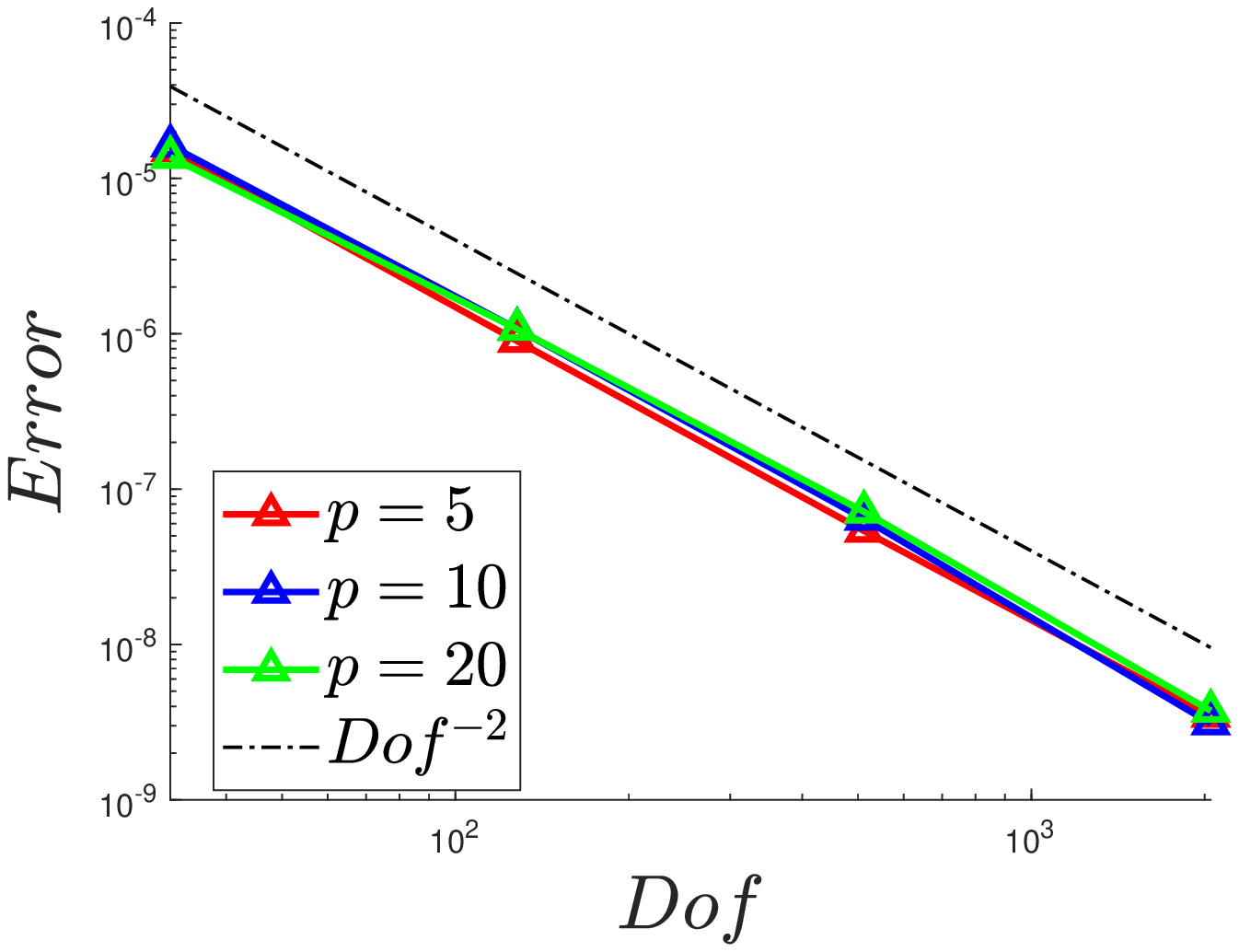}}
	\caption{Homogenization error for the Newton's method and the regularized model $\J_{\epsilon}$ with $\epsilon_-^{p-2}=1e-6$, $f=\sin(\pi x)\sin(\pi y)$, and $p=5,10,20$, respectively. $u^{(0)}$ is the solution of $-\nabla \cdot \kappa(x) \nabla u^{(0)} = f$.}
	\label{fig:coarseerrornewton}
\end{figure}

\subsection{SPE 10}
\label{sec:numerics:spe10}

In this section, we show the performance of the iterated numerical homogenization method for a more challenging example, the SPE10 benchmark problem, which is the set of industry benchmark problems from the Society of Petroleum Engineers (SPE). The coefficients $\kappa(x,y,z)$ are piecewise constant in the domain $[0,220]\times [0,60]\times [0,85]$. We select $\kappa(x, y, 39)$ as the coefficient of the two dimensional problem, with contrast $1.74956\times 10^{7}$. We rescale the domain to $\Omega=[0,2.2]\times [0,0.6]$. We take $f=\sin((1/2.2)\pi x)\sin((1/0.6)\pi y)$, and $p=20$. In the numerical experiment, we choose $H=1/10$, $1/20$, $1/40$, respectively. The fine mesh size is fixed as $h=1/160$.  

\begin{figure}[H]
	\centering
	\subfigure{\includegraphics[width=0.45\textwidth]{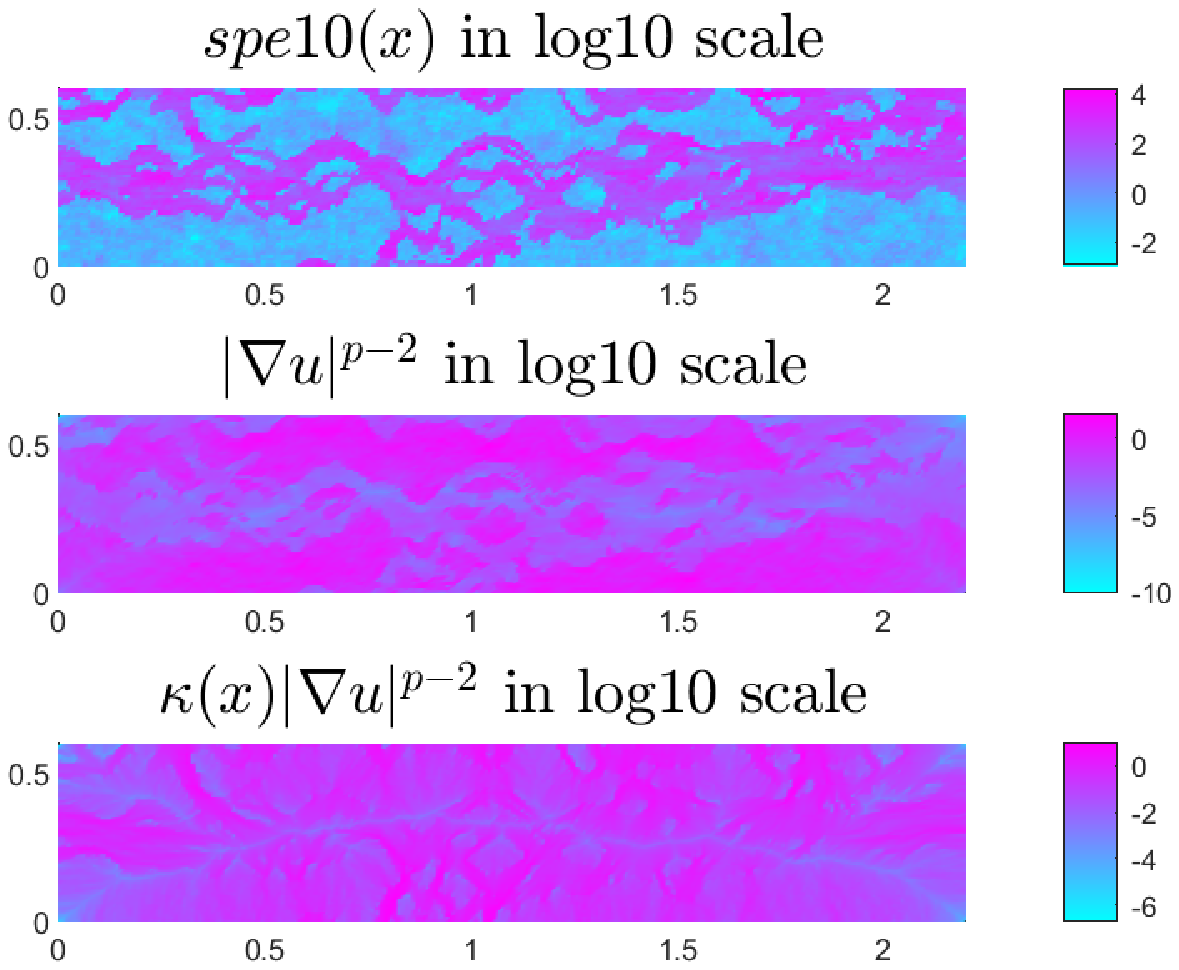}}
	\subfigure{\includegraphics[width=0.45\textwidth]{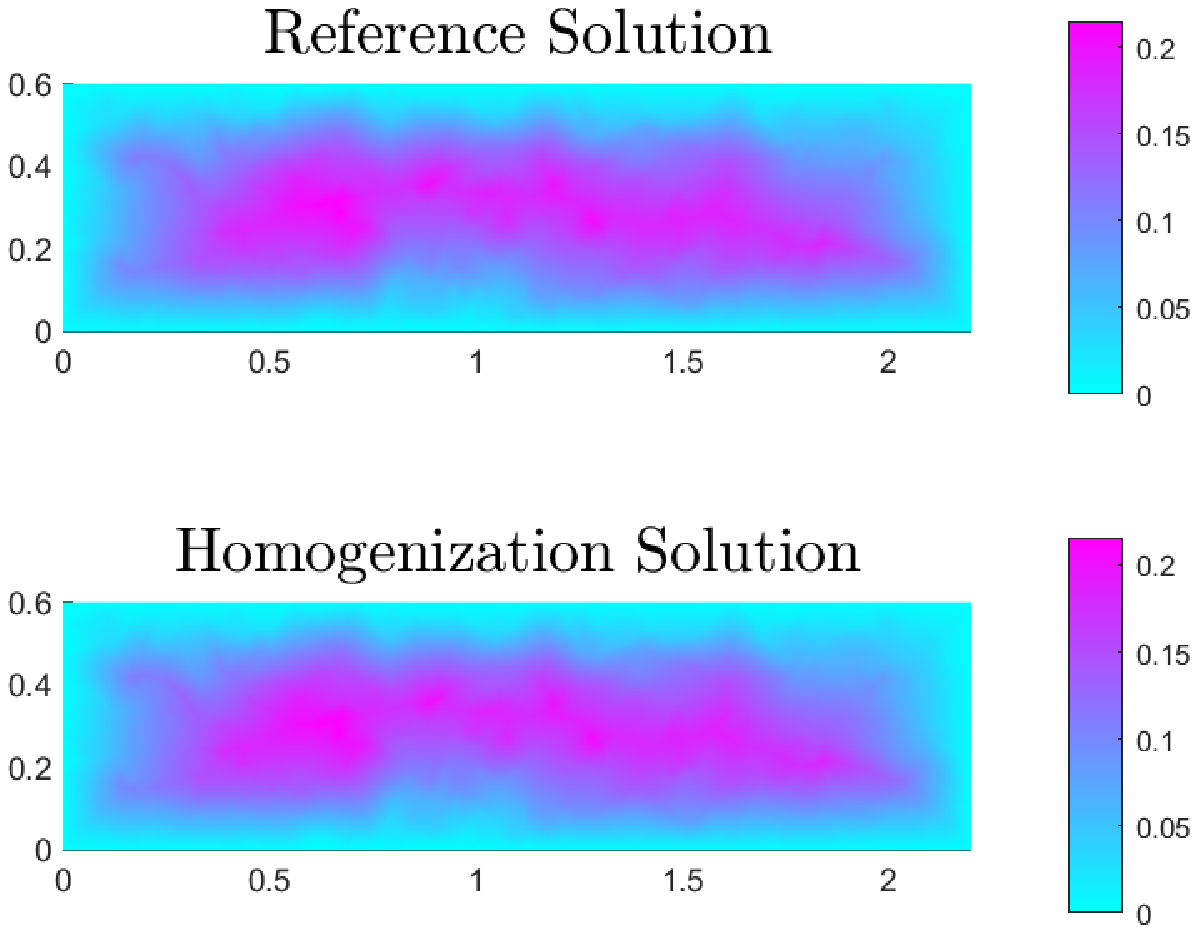}}
	\caption{SPE10 coefficients $\kappa(x)$, $|\nabla u|^{p-2}$, and $\kappa(x)|\nabla u|^{p-2}$ for the fine mesh reference solution $u$. $\min (|\nabla u|^{p-2})=0$,  $\max (|\nabla u|^{p-2})=31.8992$, $\max(\kappa|\nabla u|^{p-2})=8.7326$. Here we let $\epsilon_{-}=10^{-5}$, so the contrast of coefficients$ a[\un](x)$ is $10^6$ approximately. }
	\label{fig:spe10coef}
\end{figure}

\begin{figure}[H]
\centering
	\subfigure[$H^1$ error]{
		\includegraphics[width=0.3\textwidth]{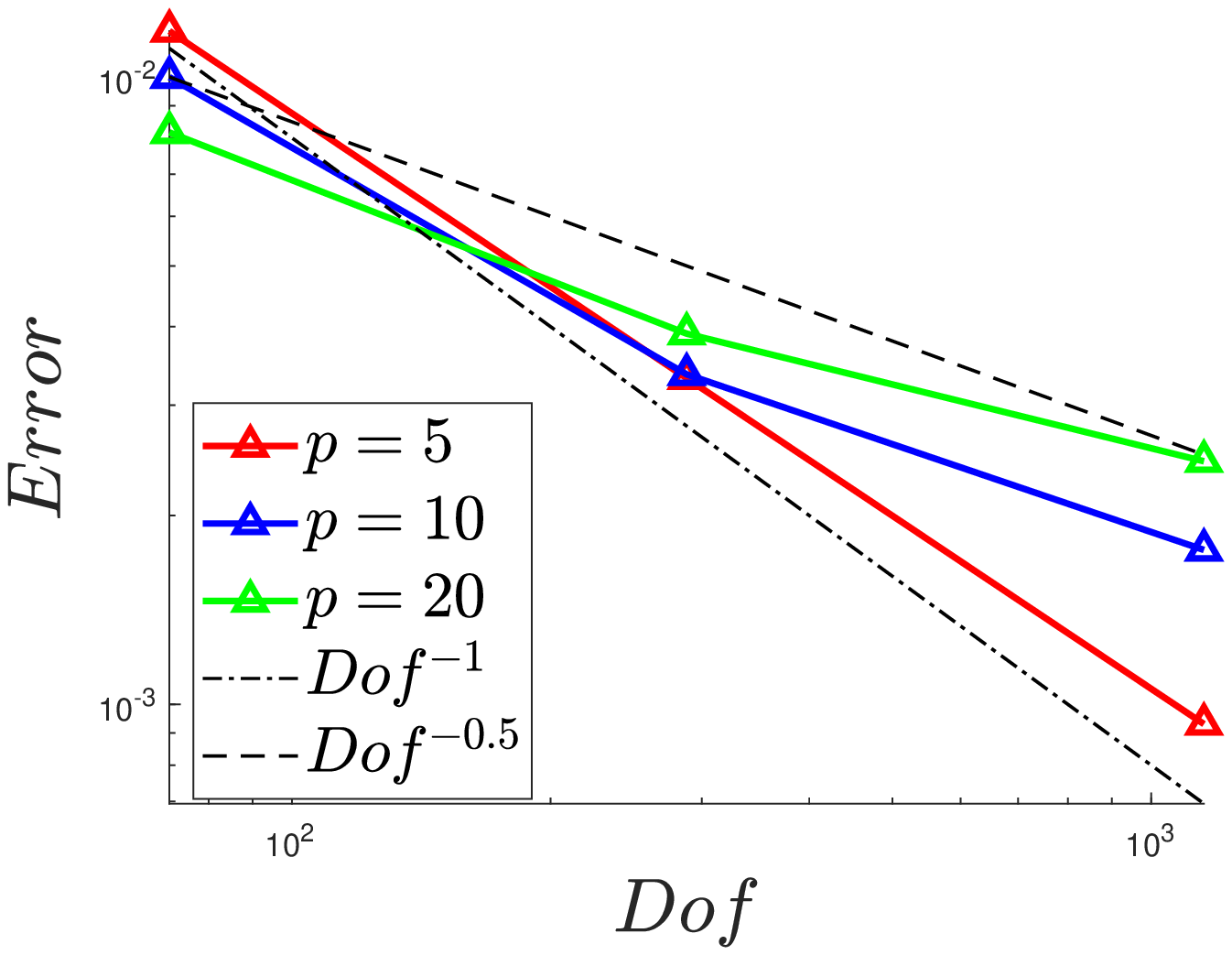}}
	\subfigure[$W^{1,p}$ error]{
	\includegraphics[width=0.3\textwidth]{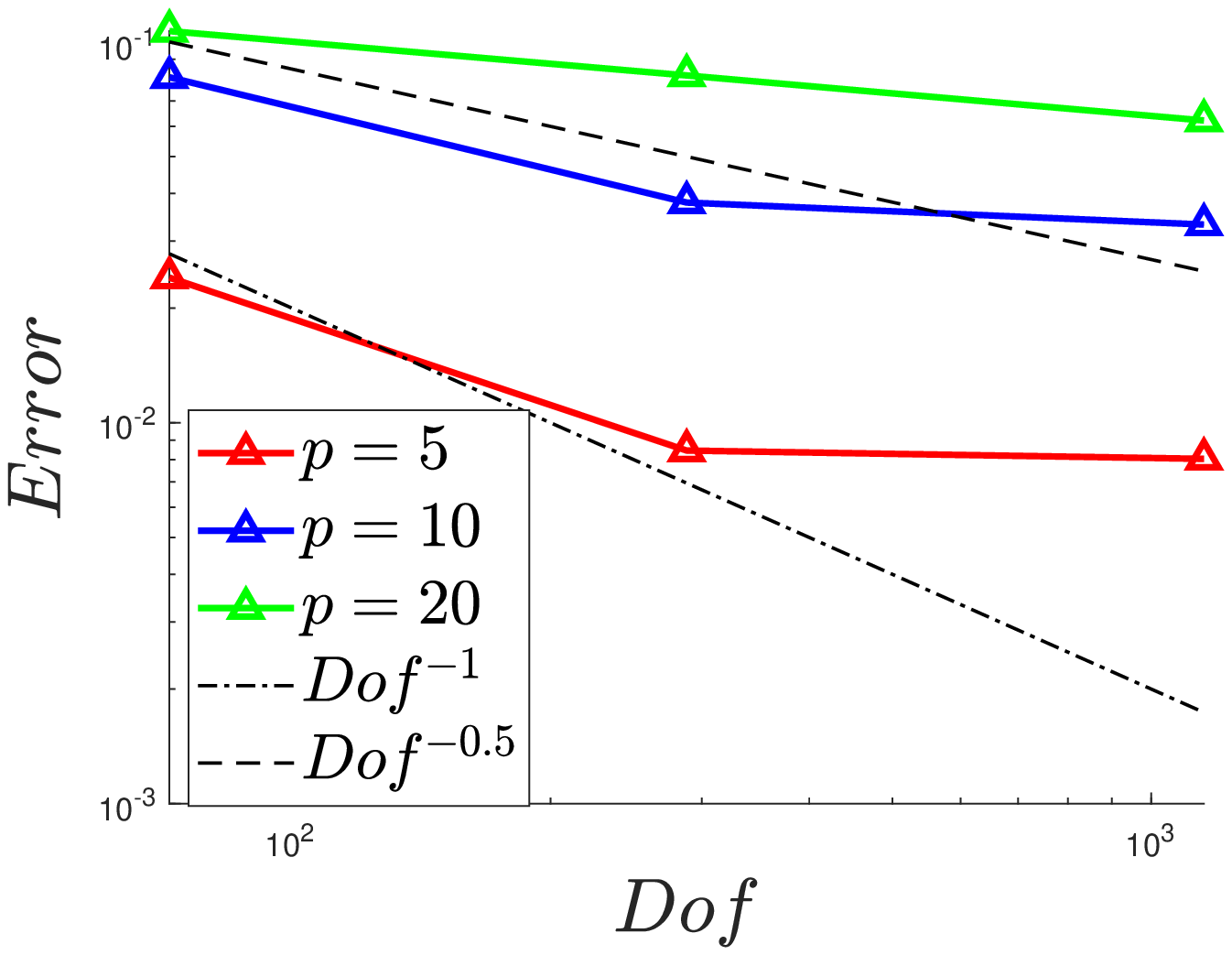}}
	\subfigure[energy error]{
	\includegraphics[width=0.3\textwidth]{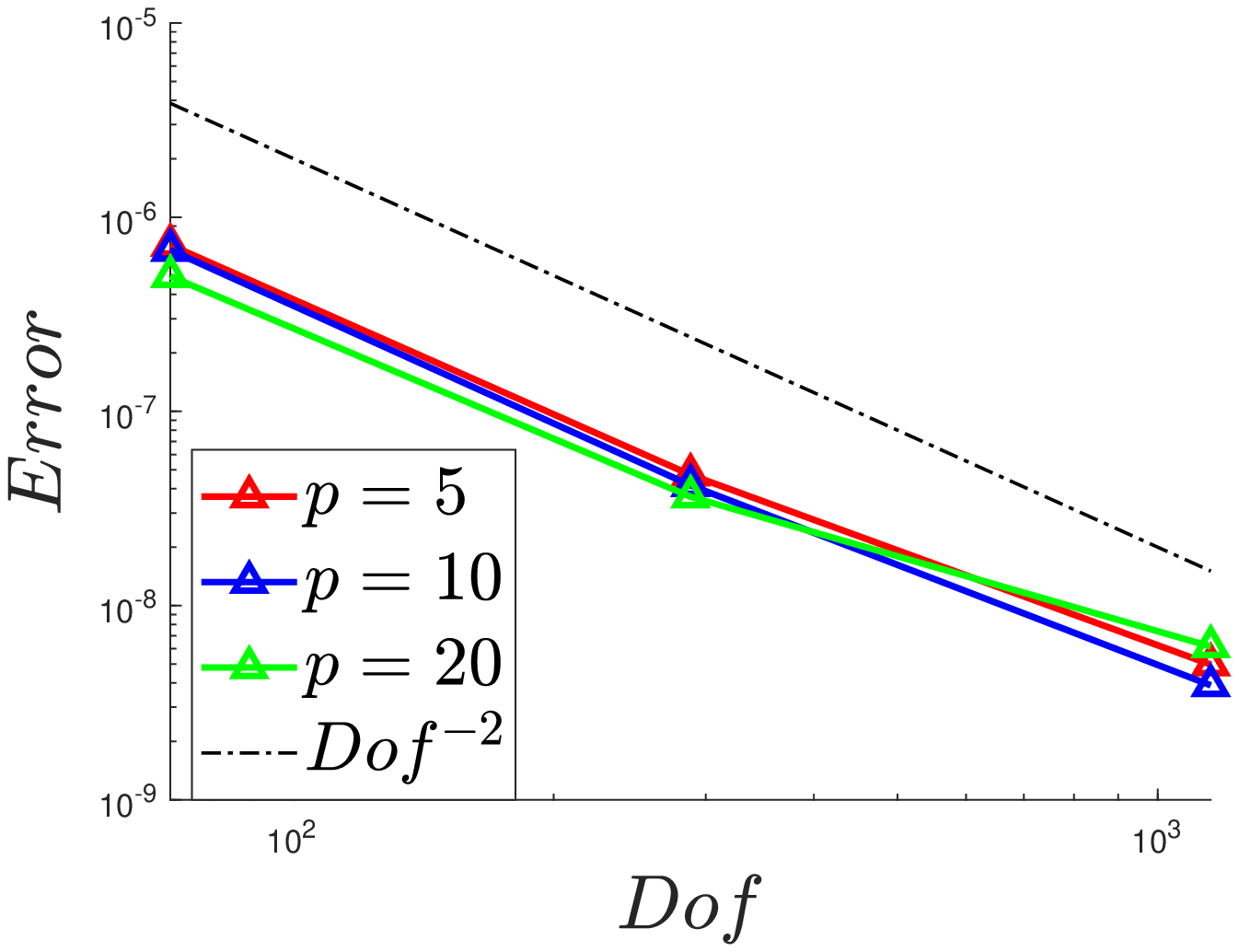}}
	\caption{Homogenization error of $u_{H}$ (obtained by Newton's method) for $p=5,10,20$.}
	\label{fig:spe10err}
\end{figure}

We illustrate the above SPE10 coefficient $\kappa$, also $|\nabla u|^{p-2}$ and $\kappa|\nabla u|^{p-2}$ in Figure \ref{fig:spe10coef}(a). We show the reference solution and homogenized solution in Figure \ref{fig:spe10coef}(b), which visually match with each other. 

We further show the homogenization error in Figure \ref{fig:spe10err}. Although the energy error has similar decay, but the $H^1$ and $W^{1,p}$ errors are not as good as the mstrig example in Section \ref{sec:numerics:mstrig}.

\subsection{Sparse Updating}
\label{sec:numerics:sparseupdate}

In the iterated homogenization method, we need to update the coarse space $\Vcn$ in every iteration, which may contribute to a significant part of the computational cost. However, apart from the first several steps of the iteration, as $\un$ converges to the true solution $u$, the linearized operator $A[\un]$ also converges to $A[u]$. Therefore, we propose a sparse updating scheme for the efficient construction of the coarse space $\Vcn$ in the iterated numerical homogenization.

Here we propose a simple indicator to switch on/off basis update, and we refer to \cite{hellman2019numerical} for more rigorous analysis.
We recall that a single basis of $\Vcn$ only depends locally on the coefficient of $A[\un]$ through $\kappa(x)$ and $|\nabla \un|$. Denote by $\phi_i^{(n,n+1)}(\tau)$ the corresponding basis of the operator $A[\un+\tau (u^{(n+1)}- \un)]$, 
and by $\bB^{(n,n+1)}(\tau)$ their stiffness matrix. The components of $\bB^{(n,n+1)}(\tau)$  write  $\bB_{i,j}^{(n,n+1)}(\tau) = A[\un+\tau (u^{(n+1)}- \un)](\phi_i^{(n,n+1)}(\tau),\phi_j^{(n,n+1)}(\tau))$. Let
\begin{displaymath}		
\I_i^{(n)}:=\frac{d \bB_{i,i}^{(n,n+1)}(\tau)}{d\tau}\Big|_{\tau=0}=A[(u^{(n+1)}- \un)](\phi_i^{n},\phi_i^{n}),		
\end{displaymath}
we may take $\I_i^{(n)}$ as an indicator for basis updating. Using indicator $\I_i^{(n)}$ for $n\geqs 1$, we can reformulate the \textbf{STEP 2} of Algorithms \ref{alg:inh} and \ref{alg:inhrrls} as below,

\begin{algorithmic}
	\STATE{\textbf{STEP 2}:
		\IF{$n=0$} 
		\STATE Construct a coarse space $\V{0}{H}:= {\rm span}\{\phin{0}{i},i=1,...,N_{H}\}$, and each $\phin{0}{i}$ is obtained by solving a constrained minimization problem \eqref{eqn:variational} associated with the quadratic form $A[u^{(0)}]$. 
		\ELSE
		\FOR{$i=1..N_H$}
		\STATE Calculate $\I_i^{(n)}$.
		\IF{$\I_i^{(n)} \geqs \delta_{\I}$}
		\STATE Calculate new basis $\phi_{i}^{(n)}$ by solving  \eqref{eqn:variational} associated with the quadratic form $A[\un]$.
		\ELSE 
		\STATE $\phi_{i}^{(n)} := \phi_{i}^{(n-1)}$.
		\ENDIF 
		\ENDFOR
		\STATE $\V{n}{H}:= {\rm span}\{\phin{n}{i},i=1,...,N_{H}\}$.
		\ENDIF} \\
\end{algorithmic}


\begin{table}[H]
    \parbox{.45\linewidth}{
	\caption{Efficiency Improvement for $p=15$, $\kappa(x)= spe10$}
	\begin{tabular}{ccc}
		\hline
		& Efficiency  & Accuracy($H^1$)  \\
		\hline
		Full Updating & 100\%   & 0.67e-2 \\
		\hline
		$\delta_{\I}=4e-8$ & 80.94\% & 0.73e-2 \\
		\hline
		$\delta_{\I}=2e-8$ & 80.23\% & 0.79e-2 \\
		\hline
		$\delta_{\I}=1e-8$ & 68.68\% & 1.08e-2 	\\
	\end{tabular}
	\label{tab:spe10p15}
    }
    \hfill
    \parbox{.45\linewidth}{
    \caption{Efficiency Improvement for $p=20$, $\kappa(x)= spe10$}
	\begin{tabular}{ccc}
		\hline
		& Efficiency  & Accuracy($H^1$)  \\
		\hline
		Full Updating & 100\%   & 0.44e-2 \\
		\hline
		$\delta_{\I}=4e-9$ & 84.89\% & 0.48e-2 	\\
		\hline
		$\delta_{\I}=6e-9$ & 65.42\% & 0.75e-2 \\
		\hline 
		$\delta_{\I}=1e-8$ & 55.75\% & 0.82e-2 \\	
	\end{tabular}}
	\label{tab:spe10p20}
\end{table}
	

In Table \ref{tab:spe10p15} and Table \ref{tab:spe10p20}, we show the total number of basis updating (Efficiency) versus the parameter $\delta_\I$, for spe10 example with $p=15,20$, respectively. It seems the sparse updating is more effective for more nonlinear problem ($p=20$).

\section{Conclusion}\label{sec:conclusion}

In this paper we propose the iterated numerical homogenization method for multi-scale elliptic equations with monotone nonlinearity. We combine the iterative methods inspired by the quasi-norm based approach and the numerical homogenization for each linearized operator in the nonlinear iteration. The residual regularized line search is proposed to ensure the convergence of the iterated numerical homogenization up to coarse resolution. We offer a number of representative numerical examples to illustrate and validate the proposed method.

We plan to study more complicated nonlinear multiscale problems, for example, problems with nonconvex flux, and nonlinear heterogeneous elasticity. We also plan to study the use of nonlocal upscaling techniques \cite{chung2018non,chung2018nonlinear}. 

\section{Acknowledgements}

X.L and L.Z  are partially supported by the National Natural Science Foundation of China (NSFC 11871339, 11861131004). 
E.C. is partially supported by the Hong Kong RGC General Research Fund (Project numbers 14304719 and 14302018).

\bibliographystyle{plain}
\bibliography{references}

\begin{thebibliography}{10}

\bibitem{abdulle2014analysis}
Assyr Abdulle and Gilles Vilmart.
\newblock Analysis of the finite element heterogeneous multiscale method for
  quasilinear elliptic homogenization problems.
\newblock {\em Mathematics of Computation}, 83(286):513--536, 2014.

\bibitem{ab05}
G.~Allaire and R.~Brizzi.
\newblock A multiscale finite element method for numerical homogenization.
\newblock {\em SIAM J. Multiscale Modeling and Simulation}, 4(3):790--812,
  2005.

\bibitem{allaire1992homogenization}
Gr{\'e}goire Allaire.
\newblock Homogenization and two-scale convergence.
\newblock {\em SIAM Journal on Mathematical Analysis}, 23(6):1482--1518, 1992.

\bibitem{Arbogast_two_scale_04}
T.~Arbogast.
\newblock Analysis of a two-scale, locally conservative subgrid upscaling for
  elliptic problems.
\newblock {\em SIAM J. Numer. Anal.}, 42(2):576--598 (electronic), 2004.

\bibitem{Barrett:1993}
J.~W. Barrett and W.~B. Liu.
\newblock Finite-element approximation of the p-laplacian.
\newblock {\em Mathematics of Computation}, 61(204):523--537, 1993.

\bibitem{bazilevs2007variational}
Y~Bazilevs, VM~Calo, JA~Cottrell, TJR Hughes, A~Reali, and G~Scovazzi.
\newblock Variational multiscale residual-based turbulence modeling for large
  eddy simulation of incompressible flows.
\newblock {\em Computer Methods in Applied Mechanics and Engineering},
  197(1):173--201, 2007.

\bibitem{berlyand2013introduction}
Leonid Berlyand, Alexander~G Kolpakov, and Alexei Novikov.
\newblock {\em Introduction to the network approximation method for materials
  modeling}, volume 148.
\newblock Cambridge University Press, 2013.

\bibitem{berlyand2010flux}
Leonid Berlyand and Houman Owhadi.
\newblock Flux norm approach to finite dimensional homogenization
  approximations with non-separated scales and high contrast.
\newblock {\em Archive for rational mechanics and analysis}, 198(2):677--721,
  2010.

\bibitem{Bregman:1967}
L.~M. Bregman.
\newblock The relaxation method of finding the common points of convex sets and
  its application to the solution of problems in convex programming.
\newblock {\em USSR Computational Mathematics and Mathematical Physics},
  7(3):200–217, 1967.

\bibitem{Budninskiy:2019}
Max Budninskiy, Houman Owhadi, and Mathieu Desbrun.
\newblock Operator-adapted wavelets for finite-element differential forms.
\newblock {\em Journal of Computational Physics}, 388:144--177, 2019.

\bibitem{casas2016approximation}
Eduardo Casas, Peter~I Kogut, and G\"{u}nter Leugering.
\newblock Approximation of optimal control problems in the coefficient for the
  p-laplace equation. i. convergence result.
\newblock {\em SIAM Journal on Control and Optimization}, 54(3):1406--1422,
  2016.

\bibitem{castaneda2006nonlinear}
P~Ponte Casta{\~n}eda, J{\'o}zef~Joachim Telega, and Barbara Gambin.
\newblock {\em Nonlinear Homogenization and Its Applications to Composites,
  Polycrystals and Smart Materials: Proceedings of the NATO Advanced Research
  Workshop, Held in Warsaw, Poland, 23-26 June 2003}, volume 170.
\newblock Springer Science \& Business Media, 2006.

\bibitem{Chen:2018}
Jiong Chen, Hujun Bao, Tianyu Wang, Mathieu Desbrun, and Jin Huang.
\newblock Numerical coarsening using discontinuous shape functions.
\newblock {\em ACM Trans. Graph.}, 37(4):Art. 117, 2018.

\bibitem{Chen:2019}
Jiong Chen, Max Budninskiy, Houman Owhadi, Hujun Bao, Jin Huang, and Mathieu
  Desbrun.
\newblock Material-adapted refinable basis functions for elasticity simulation.
\newblock {\em ACM Trans. Graph. (SIGGRAPH Asia)}, 38(6):Art. 161, 2019.

\bibitem{Chiadopiat:1990}
Valeria Chiad{\`o}Piat and Anneliese Defranceschi.
\newblock Homogenization of monotone operators.
\newblock {\em Nonlinear Analysis: Theory, Methods \& Applications},
  14(9):717--732, 1990.

\bibitem{Christ:2017}
Bruno Christ, Uta Dahmen, Karl-Heinz Herrmann, Matthias K\"{o}nig,
  J\"{u}rgen~R. Reichenbach, Tim Ricken, Jana Schleicher, Lars~Ole Schwen,
  Sebastian Vlaic, and Navina Waschinsky.
\newblock Computational modeling in liver surgery.
\newblock {\em Front Physiol.}, 8:906, 2017.

\bibitem{chung2014staggered}
Eric Chung, Bernardo Cockburn, and Guosheng Fu.
\newblock The staggered dg method is the limit of a hybridizable dg method.
\newblock {\em SIAM Journal on Numerical Analysis}, 52(2):915--932, 2014.

\bibitem{chung2017multiscale}
Eric Chung, Yalchin Efendiev, Ke~Shi, and Shuai Ye.
\newblock A multiscale model reduction method for nonlinear monotone elliptic
  equations in heterogeneous media.
\newblock {\em Networks \& Heterogeneous Media}, 12(4):619, 2017.

\bibitem{chung2018nonlinear}
Eric~T Chung, Yalchin Efendiev, Wing~T Leung, and Mary Wheeler.
\newblock Nonlinear nonlocal multicontinua upscaling framework and its
  applications.
\newblock {\em International Journal for Multiscale Computational Engineering},
  16(5), 2018.

\bibitem{chung2014adaptiveDG}
Eric~T Chung, Yalchin Efendiev, and Wing~Tat Leung.
\newblock An adaptive generalized multiscale discontinuous galerkin method
  ({GM}s{DGM}) for high-contrast flow problems.
\newblock {\em arXiv preprint arXiv:1409.3474}, 2014.

\bibitem{chung2018non}
Eric~T Chung, Yalchin Efendiev, Wing~Tat Leung, Maria Vasilyeva, and Yating
  Wang.
\newblock Non-local multi-continua upscaling for flows in heterogeneous
  fractured media.
\newblock {\em Journal of Computational Physics}, 372:22--34, 2018.

\bibitem{chung2015residual}
ET~Chung, Y~Efendiev, and WT~Leung.
\newblock Residual-driven online generalized multiscale finite element methods.
\newblock {\em To appear in J. Comput. Phys.}, 2015.

\bibitem{chung2014adaptive}
ET~Chung, Y~Efendiev, and G~Li.
\newblock An adaptive {GMsFEM} for high-contrast flow problems.
\newblock {\em Journal of Computational Physics}, 273:54--76, 2014.

\bibitem{cockburn2009unified}
Bernardo Cockburn, Jayadeep Gopalakrishnan, and Raytcho Lazarov.
\newblock Unified hybridization of discontinuous galerkin, mixed, and
  continuous galerkin methods for second order elliptic problems.
\newblock {\em SIAM Journal on Numerical Analysis}, 47(2):1319--1365, 2009.

\bibitem{Diening:2020}
L.~Diening, M.~Fornasier, R.~Tomasi, and M.~Wank.
\newblock A relaxed ka\v{c}anov iteration for the p-poisson problem.
\newblock {\em Numerische Mathematik}, 145:1–34, 2020.

\bibitem{Diening:2008a}
L.~Diening and Ch. Kreuzer.
\newblock Linear convergence of an adaptive finite element method for the
  p-laplacian equation.
\newblock {\em SIAM J. Numer. Anal.}, 46(2):614–638, 2008.

\bibitem{Diening:2013}
L.~Diening, Ch. Kreuzer, and E.~S\"{u}li.
\newblock Finite element approximation of steady flows of incompressible fluids
  with implicit power-law-like rheology.
\newblock {\em SIAM Journal on Numerical Analysis}, 51(2):984--1015, 2013.

\bibitem{Diening:2007}
L.~Diening and M.~R\r{u}\v{z}ii\v{c}ka.
\newblock Interpolation operators in orlicz–sobolev spaces.
\newblock {\em Numerische Mathematik}, 107:107–129, 2007.

\bibitem{Diening:2008b}
Lars Diening and Frank Ettwein.
\newblock Fractional estimates for non-differentiable elliptic systems with
  general growth.
\newblock {\em Forum Mathematicum}, 20(3):523 -- 556, 01 May. 2008.

\bibitem{dur91}
L.J. Durlofsky.
\newblock Numerical calculation of equivalent grid block permeability tensors
  for heterogeneous porous media.
\newblock {\em Water Resour. Res.}, 27:699--708, 1991.

\bibitem{ee03}
W.~E and B.~Engquist.
\newblock Heterogeneous multiscale methods.
\newblock {\em Comm. Math. Sci.}, 1(1):87--132, 2003.

\bibitem{Ebmeyer:2005}
Carsten Ebmeyer and WB~Liu.
\newblock Quasi-norm interpolation error estimates for the piecewise linear
  finite element approximation of p-laplacian problems.
\newblock {\em Numerische Mathematik}, 100(2):233--258, 2005.

\bibitem{Review}
Y.~Efendiev and J.~Galvis.
\newblock Coarse-grid multiscale model reduction techniques for flows in
  heterogeneous media and applications.
\newblock {\em Chapter of Numerical Analysis of Multiscale Problems, Lecture
  Notes in Computational Science and Engineering, Vol. 83}, pages 97--125,
  2012.

\bibitem{egh12}
Y.~Efendiev, J.~Galvis, and T.~Hou.
\newblock Generalized multiscale finite element methods.
\newblock {\em Journal of Computational Physics}, 251:116--135, 2013.

\bibitem{Efendiev_GKiL_12}
Y.~Efendiev, J.~Galvis, S.~Ki~Kang, and R.D. Lazarov.
\newblock Robust multiscale iterative solvers for nonlinear flows in highly
  heterogeneous media.
\newblock {\em Numer. Math. Theory Methods Appl.}, 5(3):359--383, 2012.

\bibitem{egw10}
Y.~Efendiev, J.~Galvis, and X.H. Wu.
\newblock Multiscale finite element methods for high-contrast problems using
  local spectral basis functions.
\newblock {\em Journal of Computational Physics}, 230:937--955, 2011.

\bibitem{eh09}
Y.~Efendiev and T.~Hou.
\newblock {\em Multiscale Finite Element Methods: Theory and Applications}.
\newblock Springer, 2009.

\bibitem{ehg04}
Y.~Efendiev, T.~Hou, and V.~Ginting.
\newblock Multiscale finite element methods for nonlinear problems and their
  applications.
\newblock {\em Comm. Math. Sci.}, 2:553--589, 2004.

\bibitem{ep04}
Y.~Efendiev and A.~Pankov.
\newblock Numerical homogenization and correctors for nonlinear elliptic
  equations.
\newblock {\em SIAM J. Appl. Math.}, 65(1):43--68, 2004.

\bibitem{egpz14}
Yalchin Efendiev, J~Galvis, M~Presho, and J~Zhou.
\newblock A multiscale enrichment procedure for nonlinear monotone operators.
\newblock {\em ESAIM: Mathematical Modelling and Numerical Analysis},
  48(02):475--491, 2014.

\bibitem{efendiev2015spectral}
Yalchin Efendiev, Raytcho Lazarov, Minam Moon, and Ke~Shi.
\newblock A spectral multiscale hybridizable discontinuous {G}alerkin method
  for second order elliptic problems.
\newblock {\em Computer Methods in Applied Mechanics and Engineering},
  292:243--256, 2015.

\bibitem{Elmoataz:2015}
Abderrahim Elmoataz, Matthieu Toutain, and Daniel Tenbrinck.
\newblock On the p-laplacian and $\infty$-laplacian on graphs with applications
  in image and data processing.
\newblock {\em SIAM Journal on Imaging Sciences}, 8(4):2412–2451, 2015.

\bibitem{feyel1999}
Fr{\'e}d{\'e}ric Feyel.
\newblock Multiscale fe2 elastoviscoplastic analysis of composite structures.
\newblock {\em Computational Materials Science}, 16(1-4):344--354, 1999.

\bibitem{geers2017homogenization}
Marc~GD Geers, Varvara~G Kouznetsova, Karel Matou{\v{s}}, and Julien Yvonnet.
\newblock Homogenization methods and multiscale modeling: nonlinear problems.
\newblock {\em Encyclopedia of Computational Mechanics Second Edition}, pages
  1--34, 2017.

\bibitem{hellman2019numerical}
Fredrik Hellman and Axel M{\aa}lqvist.
\newblock Numerical homogenization of elliptic pdes with similar coefficients.
\newblock {\em Multiscale Modeling \& Simulation}, 17(2):650--674, 2019.

\bibitem{henning2011heterogeneous}
Patrick Henning.
\newblock Heterogeneous multiscale finite element methods for
  advection-diffusion and nonlinear elliptic multiscale problems.
\newblock {\em M{\"u}nster: Univ. M{\"u}nster,
  Mathematisch-Naturwissenschaftliche Fakult{\"a}t, Fachbereich Mathematik und
  Informatik (Diss.). ii}, page~63, 2011.

\bibitem{henning2015error}
Patrick Henning and Mario Ohlberger.
\newblock Error control and adaptivity for heterogeneous multiscale
  approximations of nonlinear monotone problems.
\newblock {\em Discrete and Continuous Dynamical Systems-Serie S. Special Issue
  on Numerical Methods based on Homogenization and Two-Scale Convergence},
  2015.

\bibitem{Huang:2007}
Yunqing Huang, Ruo Li, and Wenbin Liu.
\newblock Preconditioned descent algorithms for p-laplacian.
\newblock {\em Journal of Scientific Computing}, 32(2):343--371, 2007.

\bibitem{hughes98}
T.~J.~R Hughes, G.~Feijoo, L.~Mazzei, and J.~Quincy.
\newblock The variational multiscale method---a paradigm for computational
  mechanics.
\newblock {\em Comput. Methods Appl. Mech. Engrg.}, 166:3--24, 1998.

\bibitem{jikov2012homogenization}
V.V. Jikov, S.~M. Kozlov, and O.~A. Oleinik.
\newblock {\em Homogenization of differential operators and integral
  functionals}.
\newblock Springer Science \& Business Media, 2012.

\bibitem{Kharevych:2009}
Lily Kharevych, Patrick Mullen, Houman Owhadi, and Mathieu Desbrun.
\newblock Numerical coarsening of inhomogeneous elastic materials.
\newblock {\em ACM Transactions on Graphics (SIGGRAPH)}, 28(3), 2009.

\bibitem{krasnosel}
Mark~Aleksandrovich Krasnoselskii{\u\i}.
\newblock {\em Convex functions and Orlicz spaces}, volume 4311.
\newblock US Atomic Energy Commission, 1960.

\bibitem{LerayLions:1965}
J.~Leray and J.~L. Lions.
\newblock Quelques r\'{e}sulatats de vi\v{s}ik sur les probl\`emes elliptiques
  nonlin\'{e}aires par les m\'ethodes de minty-browder.
\newblock {\em Bull. Soc. Math. France}, 93:97--107, 1965.

\bibitem{lions2001reiterated}
J.~L. Lions, D.~Lukkassen, L.~E. Persson, and P.~Wall.
\newblock Reiterated homogenization of nonlinear monotone operators.
\newblock {\em Chinese Annals of Mathematics}, 22(01):1--12, 2001.

\bibitem{RN55}
W.~B. Liu and J.~W. Barrett.
\newblock A remark on the regularity of the solutions of the p-laplacian and
  its application to their finite-element approximation.
\newblock {\em Journal of Mathematical Analysis and Applications},
  178(2):470--487, 1993.

\bibitem{Liu:2018}
X.~Liu, L.~Zhang, and Zhu S.
\newblock Generalized polyharmonic splines for multiscale pdes with rough
  coefficients.
\newblock {\em Preprint}, 2018.

\bibitem{ming2005analysis}
Pingbing Ming, Pingwen Zhang, et~al.
\newblock Analysis of the heterogeneous multiscale method for elliptic
  homogenization problems.
\newblock {\em Journal of the American Mathematical Society}, 18(1):121--156,
  2005.

\bibitem{nguetseng2003homogenization}
Gabriel Nguetseng and Hubert Nnang.
\newblock Homogenization of nonlinear monotone operators beyond the periodic
  setting.
\newblock {\em Electr. J. of Diff. Eqns}, 36:1--24, 2003.

\bibitem{Nocedal:2006}
Jorge Nocedal and Stephen Wright.
\newblock {\em Numerical optimization}.
\newblock Springer Science \& Business Media, 2006.

\bibitem{OwhadiMultigrid:2017}
H.~{Owhadi}.
\newblock {Multigrid with rough coefficients and Multiresolution operator
  decomposition from Hierarchical Information Games}.
\newblock {\em SIAM Rev.}, 59(1):99--149, March 2017.

\bibitem{Owhadi:2011}
H.~Owhadi and L.~Zhang.
\newblock Localized bases for finite dimensional homogenization approximations
  with non-separated scales and high-contrast.
\newblock {\em SIAM Multiscale Model. Simul.}, 9(4):1373--1398, 2011.

\bibitem{OwhadiZhangBerlyand:2014}
H.~Owhadi, L.~Zhang, and L.~Berlyand.
\newblock Polyharmonic homogenization, rough polyharmonic splines and sparse
  super-localization.
\newblock {\em ESAIM: Math. Model. Numer. Anal.}, 48(2):517--552, 2014.

\bibitem{owhadi2014polyharmonic}
Houman Owhadi, Lei Zhang, and Leonid Berlyand.
\newblock Polyharmonic homogenization, rough polyharmonic splines and sparse
  super-localization.
\newblock {\em ESAIM: Mathematical Modelling and Numerical Analysis},
  48(02):517--552, 2014.

\bibitem{pankov2013g}
Aleksandr~Andreevich Pankov.
\newblock {\em G-convergence and homogenization of nonlinear partial
  differential operators}, volume 422.
\newblock Springer Science \& Business Media, 2013.

\bibitem{papanicolau1978asymptotic}
G~Papanicolau, A~Bensoussan, and J-L Lions.
\newblock {\em Asymptotic analysis for periodic structures}.
\newblock Elsevier, 1978.

\bibitem{Rahman:2017}
Mohammad~Mamunur Rahman, Yusheng Feng, Thomas~E. Yankeelov, and J.~Tinsley
  Odenb.
\newblock A fully coupled space–time multiscale modeling framework for
  predicting tumor growth.
\newblock {\em Comput Methods Appl Mech Eng.}, 320:261–286, 2017.

\bibitem{rao2002applications}
Malempati~Madhusudana Rao and Zhong~Dao Ren.
\newblock {\em Applications of Orlicz spaces}, volume 250.
\newblock CRC Press, 2002.

\bibitem{Slepcev:2017}
Dejan {Slepčev} and Matthew {Thorpe}.
\newblock Analysis of $p$-laplacian regularization in semi-supervised learning.
\newblock {\em arXiv preprint arXiv:1707.06213}, 2017.

\bibitem{weh02}
X.H. Wu, Y.~Efendiev, and T.Y. Hou.
\newblock Analysis of upscaling absolute permeability.
\newblock {\em Discrete and Continuous Dynamical Systems, Series B.},
  2:158--204, 2002.

\end{thebibliography}
\appendix

\section{Appendix}

\subsection{Property of N-functions}
\label{sec:app:n-function}

For a given N-function $\vphi$, we define the N-function $\psi$ by 
$\frac{\psi'(t)}{t}:=(\frac{\vphi'(t)}{t})^\frac12$, and $\bV(\bxi):=(\nabla \bPsi)(\bxi) = \psi'(|\bxi|)\frac{\xi}{|\bxi|}$. We collect the following results from \cite{Diening:2007,Diening:2008b}. 

\begin{lemma}
Let $a(x, \bxi) = \kappa(x) \vphi'(|\bxi|)\frac{\bxi}{|\bxi|}$, we have 
\begin{equation}
\begin{aligned} 
	(a(x, \bxi)-a(x, \bzeta))\cdot (\bxi - \bzeta) & \sim \vphi_{|\bxi|}(|\bxi-\bzeta|) \\
	        & \sim |\bV(\bxi) - \bV(\bzeta)|^2 \\
			& \sim \vphi''(|\bxi|+|\bzeta|)|\bxi-\bzeta|^2.
\end{aligned}
\end{equation}
\label{lem:nfunction}
\end{lemma}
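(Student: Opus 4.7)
My plan is to establish the chain of equivalences by showing that each of the other three quantities is comparable to the "reference" quantity $\vphi''(|\bxi|+|\bzeta|)|\bxi-\bzeta|^2$, invoking Property \ref{prp:vphi}, Assumption \ref{asm:vphi}, and the fact that $\bV$ has the same algebraic structure as $a$ but with $\psi$ in place of $\vphi$. The first equivalence $(a(x,\bxi)-a(x,\bzeta))\cdot(\bxi-\bzeta) \sim \vphi''(|\bxi|+|\bzeta|)|\bxi-\bzeta|^2$ is immediate from Property \ref{prp:vphi}: the lower bound is item (1), and the upper bound follows by combining item (2) with the Cauchy--Schwarz inequality $(a(x,\bxi)-a(x,\bzeta))\cdot(\bxi-\bzeta) \leqs |a(x,\bxi)-a(x,\bzeta)|\,|\bxi-\bzeta|$.

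For the equivalence $\vphi_{|\bxi|}(|\bxi-\bzeta|) \sim \vphi''(|\bxi|+|\bzeta|)|\bxi-\bzeta|^2$, I would work directly from the shifted-N-function definition. Using $\vphi_a'(t)\sim \vphi''(a\vee t)\,t$ (a consequence of $\vphi'(s)\sim s\vphi''(s)$ from Assumption \ref{asm:vphi}) and integrating, one obtains
\[
\vphi_a(t)\;\sim\;\int_0^t \vphi''(a\vee s)\, s\,ds.
\]
A short case split at $t=a$, together with the monotonicity of $\vphi''$ and the relation $\vphi(t)\sim \vphi'(t)t\sim t^2\vphi''(t)$, yields $\vphi_a(t)\sim \vphi''(a\vee t)\,t^2$. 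Specializing to $a=|\bxi|$ and $t=|\bxi-\bzeta|$, I would then replace $|\bxi|\vee|\bxi-\bzeta|$ by $|\bxi|+|\bzeta|$ using the elementary equivalence $\max(|\bxi|,|\bxi-\bzeta|)\leqs |\bxi|+|\bzeta|\leqs 4\max(|\bxi|,|\bxi-\bzeta|)$ together with the $\Delta_2$-condition for $\vphi''$, which itself follows from $\Delta_2(\vphi)$ and $\vphi'(t)\sim t\vphi''(t)$.

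For the equivalence with $|\bV(\bxi)-\bV(\bzeta)|^2$, the key observation is the algebraic identity $\psi'(t)^2 = t\vphi'(t)\sim t^2\vphi''(t)$, which by $\psi''(t)\sim \psi'(t)/t$ gives $\psi''(t)^2\sim \vphi''(t)$. Since $\bV$ has the form $\bV(\bxi)=\psi'(|\bxi|)\bxi/|\bxi|$, one may apply Property \ref{prp:vphi} to $\bV$ with $\psi$ in place of $\vphi$: item (2) gives the upper bound $|\bV(\bxi)-\bV(\bzeta)|\lesssim \psi''(|\bxi|+|\bzeta|)|\bxi-\bzeta|$, while item (1) together with Cauchy--Schwarz yields the matching lower bound $|\bV(\bxi)-\bV(\bzeta)|\,|\bxi-\bzeta|\geqs (\bV(\bxi)-\bV(\bzeta))\cdot(\bxi-\bzeta)\gtrsim \psi''(|\bxi|+|\bzeta|)|\bxi-\bzeta|^2$. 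Squaring and invoking $\psi''(t)^2\sim \vphi''(t)$ closes the final equivalence. To apply Property \ref{prp:vphi} to $\psi$, I must verify that $\psi$ inherits Assumption \ref{asm:vphi} from $\vphi$; this is a routine computation from the defining relation $\psi'(t)/t=(\vphi'(t)/t)^{1/2}$.

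The main obstacle I anticipate is the shifted-N-function estimate in the middle step: correctly bookkeeping the two regimes $t\leqs a$ and $t>a$, and replacing $a\vee t$ by $a+t$, requires careful use of monotonicity of $\vphi''$ and of the fact that the $\Delta_2$-condition propagates from $\vphi$ to $\vphi'$ and then to $\vphi''$. Once this is in place, the remaining steps reduce to direct invocations of Property \ref{prp:vphi} applied to $\vphi$ and to $\psi$.
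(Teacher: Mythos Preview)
The paper does not actually prove this lemma; it appears in the appendix prefaced by ``We collect the following results from \cite{Diening:2007,Diening:2008b}'' and is simply quoted from those references. Your reconstruction is correct and follows the standard route found there: Property~\ref{prp:vphi} (items (1) and (2) plus Cauchy--Schwarz) gives the equivalence $(a(x,\bxi)-a(x,\bzeta))\cdot(\bxi-\bzeta)\sim\vphi''(|\bxi|+|\bzeta|)|\bxi-\bzeta|^2$; your shifted-N-function computation is precisely the content of Lemma~\ref{lem:vphi2}, which the paper also states (without proof) in the same appendix, so you could shorten the argument by citing it directly; and the $\bV$-equivalence is obtained exactly as you describe, by applying Property~\ref{prp:vphi} to $\psi$ and using $\psi''(t)^2\sim\vphi''(t)$. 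The only point to watch is that Property~\ref{prp:vphi} and Lemma~\ref{lem:nfunction} are, in the original Diening papers, proved together rather than one from the other, so be careful not to introduce circularity; but since the paper treats Property~\ref{prp:vphi} as established earlier (in Section~\ref{sec:prelim:form}) your use of it here is consistent with the paper's logical structure.
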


\begin{lemma}[Young type inequality]
Let $\vphi$ be an N-function with $\Delta_2({\vphi, \vphi^*})< \infty$.  
For all $\delta >0$, we have the following Young's inequality, where $C_\delta$ only depends on $\Delta_2({\vphi, \vphi^*})$, such that for all $t$, $s$, $a\geqs 0$, 
\begin{equation}
    t\vphi'_a(s)+s\vphi'_a(t)\leqs\delta \vphi_a(t) + c_\delta \vphi_a(s).
\end{equation}
\label{lem:young}
\end{lemma}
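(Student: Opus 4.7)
The plan is to deduce this shifted Young inequality from the classical one by viewing $\vphi_a$ as an $N$-function in its own right and exploiting two standard properties of the shifted family: (i) $\Delta_2(\vphi_a,\vphi_a^*)\lesssim \Delta_2(\vphi,\vphi^*)$ uniformly in $a\geqs 0$, and (ii) the self-duality identity $\vphi_a^*(\vphi_a'(t))\sim \vphi_a(t)$ with $a$-independent constants. Both are standard inputs from the shifted $N$-function calculus developed by Diening and co-authors, which I will invoke as black boxes, or, if needed, prove by splitting into the regimes $t\leqs a$ and $t\geqs a$.

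First, I recall that for any $N$-function $\Phi$ with $\Delta_2(\Phi,\Phi^*)<\infty$, the classical Young inequality $\tau\sigma\leqs \Phi(\tau)+\Phi^*(\sigma)$ can be upgraded to the $\epsilon$-version $\tau\sigma\leqs \epsilon\,\Phi(\tau)+c_\epsilon\,\Phi^*(\sigma)$ for all $\epsilon>0$, with $c_\epsilon$ depending only on $\epsilon$ and $\Delta_2(\Phi,\Phi^*)$. This is a routine consequence of the scaling $\tau\sigma=(\lambda\tau)(\sigma/\lambda)$ combined with the two-sided power-type growth of $\Phi$ implied by the $\Delta_2$ hypotheses on $\Phi$ and $\Phi^*$.

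Applying this refined Young inequality to $\Phi=\vphi_a$ (using (i) so the constants are $a$-independent) with $\tau=t$, $\sigma=\vphi_a'(s)$, and invoking (ii) in the last step, yields
\[
 t\,\vphi_a'(s)\leqs \tfrac{\delta}{2}\,\vphi_a(t)+c_\delta^{(1)}\,\vphi_a^*\!\bigl(\vphi_a'(s)\bigr)\leqs \tfrac{\delta}{2}\,\vphi_a(t)+c_\delta^{(2)}\,\vphi_a(s).
\]
Symmetrically, applying the refined Young inequality to the pair $(\vphi_a^*,\vphi_a)$ with $\tau=\vphi_a'(t)$, $\sigma=s$ gives $s\,\vphi_a'(t)\leqs \epsilon\,\vphi_a^*(\vphi_a'(t))+c_\epsilon\,\vphi_a(s)\leqs \epsilon C\,\vphi_a(t)+c_\epsilon\,\vphi_a(s)$ by (ii); choosing $\epsilon=\delta/(2C)$ delivers $s\,\vphi_a'(t)\leqs \tfrac{\delta}{2}\vphi_a(t)+c_\delta^{(3)}\,\vphi_a(s)$. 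Summing the two bounds produces the claim with $c_\delta=c_\delta^{(2)}+c_\delta^{(3)}$, which depends only on $\delta$ and $\Delta_2(\vphi,\vphi^*)$.

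The main obstacle is not the scaling argument, which is a textbook manipulation, but the $a$-uniform versions of (i) and (ii): this is where the real work is concentrated, since those facts rest on the observation that $\vphi_a(t)\sim \vphi''(a)\,t^2$ for $t\lesssim a$ and $\vphi_a(t)\sim \vphi(t)$ for $t\gtrsim a$, so that the $\Delta_2$ constants of $\vphi_a$ are inherited from those of $\vphi$ under Assumption~\ref{asm:vphi}. Once that shifted machinery is in hand, the present lemma follows immediately.
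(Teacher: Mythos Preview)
The paper does not prove this lemma at all: it is listed in Appendix~\ref{sec:app:n-function} under the heading ``We collect the following results from \cite{Diening:2007,Diening:2008b}'' and is invoked as a known black-box from the shifted $N$-function calculus. Your sketch is in fact the standard argument from those references---reducing to the classical $\epsilon$-Young inequality for $\vphi_a$ via the uniform $\Delta_2$ bound and the equivalence $\vphi_a^*(\vphi_a'(t))\sim\vphi_a(t)$---so there is nothing to compare against here; you have simply supplied what the paper leaves to citation.
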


\begin{lemma}
Let the N-function $\vphi$ satisfy Assumption \ref{asm:vphi}, then uniformly in $s$, $t\in\mathbb{R}$, we have 
    \begin{equation}
        \begin{aligned}
            \vphi''(|s|+|t|)|s-t| & \sim \vphi'_{|s|}(|s-t|), \\
            \vphi''(|s|+|t|)|s-t|^2 & \sim \vphi_{|s|}(|s-t|).
        \end{aligned}
    \end{equation}
    \label{lem:vphi2}
\end{lemma}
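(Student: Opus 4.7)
The plan is to reduce both equivalences to a single algebraic fact about the N-function after setting $a := |s|$ and $b := |s-t|$. First, by the triangle inequality one has $|t|\leqs |s|+|s-t|$ and $|s-t|\leqs |s|+|t|$, so $|s|+|t|\sim a+b \sim a\vee b$ with universal constants. Since $\vphi''$ is non-decreasing (Assumption \ref{asm:vphi}(2)) and $\vphi(2t)\sim \vphi(t)$ transfers to $\vphi'$ and then to $\vphi''$ via $\vphi'(t)\sim t\vphi''(t)$ (Assumption \ref{asm:vphi}(1)), we obtain $\vphi''(|s|+|t|)\sim \vphi''(a\vee b)$ with constants depending only on $\Dphi$. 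It thus suffices to show $\vphi'_a(b)\sim \vphi''(a\vee b)\,b$ and $\vphi_a(b)\sim \vphi''(a\vee b)\,b^2$.

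The first equivalence is immediate from the definition $\vphi'_a(b) = \frac{\vphi'(a\vee b)}{a\vee b}\,b$ combined with Assumption \ref{asm:vphi}(1) applied at $t = a\vee b$, giving $\vphi'(a\vee b)/(a\vee b)\sim \vphi''(a\vee b)$. For the second, I would split into two cases. When $b\leqs a$, the shifted derivative reduces to $\vphi'_a(\tau) = \frac{\vphi'(a)}{a}\tau$ on $[0,b]$, so direct integration gives $\vphi_a(b) = \frac{\vphi'(a)}{2a}b^2 \sim \vphi''(a)b^2 = \vphi''(a\vee b)b^2$. When $b>a$, I would write
\begin{equation*}
\vphi_a(b) = \int_0^a \frac{\vphi'(a)}{a}\tau\, d\tau + \int_a^b \vphi'(\tau)\,d\tau = \tfrac{1}{2}\vphi'(a)a + \vphi(b)-\vphi(a),
\end{equation*}
and show this is $\sim \vphi(b) \sim \vphi''(b)b^2$. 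The upper bound is direct from $\vphi'(a)a\sim\vphi(a)\leqs \vphi(b)$. For the lower bound I would further split according to whether $b\geqs 2a$ (in which case the $\Delta_2$ condition on $\vphi$ yields $\vphi(b)-\vphi(a)\gtrsim \vphi(b)$) or $a<b<2a$ (in which case $\vphi(a)\sim \vphi(b)$ by $\Delta_2$, and the first term $\tfrac12 \vphi'(a)a\sim \vphi(a)\sim \vphi(b)$ already provides the lower bound).

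The main obstacle is the lower bound in the sub-case $a<b<2a$, where neither integrand term is individually comparable to $\vphi(b)$ without invoking $\Delta_2$ carefully; the key insight is that in this regime it is the boundary contribution $\tfrac12\vphi'(a)a$, not the incremental part $\vphi(b)-\vphi(a)$, that controls $\vphi_a(b)$ from below. All constants throughout depend only on $\Dphi$ and the implicit constant in Assumption \ref{asm:vphi}(1), hence the equivalences are uniform in $s,t\in\mathbb{R}$.
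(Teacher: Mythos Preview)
Your argument is correct. The reduction to $a=|s|$, $b=|s-t|$ via the triangle inequality together with the $\Delta_2$-stability of $\vphi''$ (which follows from Assumption~\ref{asm:vphi}(1) and the $\Delta_2$ condition on $\vphi$) gives $\vphi''(|s|+|t|)\sim\vphi''(a\vee b)$ as you claim, and the case split $b\leqs a$ versus $b>a$ (with the further split $b\geqs 2a$ / $a<b<2a$) handles the shifted function cleanly. Your observation that in the regime $a<b<2a$ it is the term $\tfrac12\vphi'(a)a$ that carries the lower bound is exactly the right point.

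Note that the paper does not prove this lemma: it is quoted in the appendix as one of several results collected from \cite{Diening:2007,Diening:2008b}. So there is no in-paper proof to compare against; your write-up supplies a self-contained elementary verification that the cited references establish by essentially the same mechanism. One small remark: you have (correctly) used the definition $\vphi'_a(b)=\dfrac{\vphi'(a\vee b)}{a\vee b}\,b$, which is the standard shifted N-function from the Diening--R\r{u}\v{z}i\v{c}ka framework; the paper's displayed definition in Section~\ref{sec:prelim:form} omits the factor $t$ and writes $\vphi_a(t)=\int_0^t\vphi'(s)\,ds$ rather than $\int_0^t\vphi'_a(s)\,ds$, but these are evidently typographical slips, since otherwise neither side of the lemma would scale correctly in $|s-t|$.
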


\begin{lemma}
Let the N-function $\vphi$ satisfy Assumption \ref{asm:vphi}, then uniformly in $s$, $t\in \mathbb{R}$, we have 
\begin{equation}
    \vphi_{|s|}(|s-t|)\sim \vphi_{|t|}(|s-t|).
\end{equation}
\label{lem:ab}
\end{lemma}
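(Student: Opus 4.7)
The plan is to reduce Lemma \ref{lem:ab} directly to Lemma \ref{lem:vphi2}, which already states that
\[
\vphi_{|s|}(|s-t|)\sim \vphi''(|s|+|t|)\,|s-t|^{2}
\]
uniformly in $s,t\in\mathbb{R}$, with constants depending only on $\Delta_{2}(\vphi,\vphi^{*})$ (by Assumption \ref{asm:vphi}).

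My first step would be to apply Lemma \ref{lem:vphi2} in its stated form to get the equivalence $\vphi_{|s|}(|s-t|)\sim \vphi''(|s|+|t|)\,|s-t|^{2}$. Then I would apply the very same lemma a second time with the roles of $s$ and $t$ interchanged, obtaining $\vphi_{|t|}(|t-s|)\sim \vphi''(|t|+|s|)\,|t-s|^{2}$. Since $|s-t|=|t-s|$ and the argument of $\vphi''$ is symmetric in $s,t$, the two right-hand sides coincide, and transitivity of $\sim$ yields
\[
\vphi_{|s|}(|s-t|)\;\sim\;\vphi''(|s|+|t|)\,|s-t|^{2}\;\sim\;\vphi_{|t|}(|s-t|),
\]
which is exactly the claim.

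Because this is just an application of an already-established equivalence, there is essentially no obstacle of substance. The one point that deserves an explicit mention in a written-out proof is that the constants in Lemma \ref{lem:vphi2} are uniform in $s,t$ (depending only on $\Delta_{2}(\vphi,\vphi^{*})$), so the composition of the two equivalences is also uniform in $s,t$. No further structure of $\vphi$ beyond Assumption \ref{asm:vphi} is needed, and there is no need to unwind the definition of the shifted N-function $\vphi_{a}$ or to invoke the Young-type inequality of Lemma \ref{lem:young}.
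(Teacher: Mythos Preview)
Your reduction is correct: applying Lemma \ref{lem:vphi2} twice with the roles of $s$ and $t$ swapped and using the symmetry of $|s|+|t|$ and $|s-t|$ immediately yields the claim, with constants uniform in $s,t$ and depending only on $\Delta_2(\vphi,\vphi^*)$. Note that the paper does not actually supply its own proof of Lemma \ref{lem:ab}; both Lemma \ref{lem:vphi2} and Lemma \ref{lem:ab} are simply collected from \cite{Diening:2007,Diening:2008b}, so your argument is in fact more self-contained than what the paper provides, while being exactly the natural route once Lemma \ref{lem:vphi2} is available.
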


The following inequality is the consequence of the Young type inequality, Property \ref{prp:vphi}, and Lemma \ref{lem:vphi2}, 
\begin{equation}
    \begin{aligned}
        (a(x, \nabla u) - a(x, \nabla v))(\nabla w-\nabla v) & \leqs 
        |(a(x, \nabla u) - a(x, \nabla v))|\cdot|(\nabla w-\nabla v)|\\
                    & \leqs c \vphi''(|\nabla u|+\nabla v|)|\nabla u -\nabla v|\cdot |\nabla w - \nabla v|\\
                    & \leqs c\vphi'_{|\nabla u|}(|\nabla u - \nabla v|)|\nabla w - \nabla v|\\
        &\leqs \delta  \vphi_{|\nabla u|}(|\nabla u - \nabla v|) + c_\delta  \vphi_{|\nabla u|}(|\nabla w - \nabla v|).
    \end{aligned}
    \label{eqn:young}
\end{equation}

\subsection{Orlicz Space}
\label{sec:app:Orlicz}

We give some characterizations of the Orlicz space and Sobolev Orlicz space according to \cite{krasnosel,rao2002applications}. 
\begin{theorem}
    Let $\varphi$ be an N-function with $\Delta_2(\vphi, \vphi^*)<\infty$, the following statements are true:
    \begin{enumerate}[(1)]
        \item 
         $L^{\varphi}(\Omega)$ is normed with $\|w\|_{L^{\vphi}(\Omega)}:=\inf \left\{\lambda>0: \int_{\Omega} \varphi\left(\frac{|w|}{\lambda}\right) d x \leqs 1\right\}$. We note that this norm coincides with $L^p$ norm for $\vphi = t^p/p$. 
        \item 
          A sequence of functions $\left\{w_{n}\right\}$ in $L^{\varphi}(\Omega)$ converges to $w \in L^{\varphi}(\Omega)$ with respect to $\|\cdot\|_{L^{\varphi}(\Omega)}$, if and only if $\int_{\Omega} \varphi\left(\left|w_{n}-w\right|\right) d x \stackrel{n \rightarrow \infty}{\longrightarrow} 0,$ 
          \item 
          $L^{\vphi}(\Omega)$ is complete, separable and reflexive.
    \end{enumerate}
\end{theorem}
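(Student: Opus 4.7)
The plan is to verify the three claims separately, and throughout the argument the $\Delta_2(\vphi,\vphi^*)$ assumption is the workhorse that allows one to pass between the modular $\rho(w):=\int_\Omega \vphi(|w|)\,dx$ and the Luxemburg quantity $\|w\|_{L^\vphi}$. These are, of course, classical results from Krasnosel'ski\u\i--Ruticki\u\i\ and Rao--Ren, so I treat the proposal as a structured verification rather than a new argument.

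For Part (1), I would first check that $\|w\|_{L^\vphi}$ is finite on $L^\vphi(\Omega)$. If $\rho(w)<\infty$, then by convexity $\vphi(|w|/\lambda)\leqs \vphi(|w|)/\lambda$ for $\lambda\geqs 1$, so the modular of $|w|/\lambda$ can be driven below $1$; existence of the infimum is immediate. Positive definiteness comes from the strict monotonicity of $\vphi$ on $(0,\infty)$ together with monotone convergence applied to $\vphi(|w|/\lambda)$ as $\lambda\downarrow 0$. Positive homogeneity is a direct substitution. The triangle inequality is the one nontrivial step: taking $\lambda_1>\|u\|$, $\lambda_2>\|v\|$ with $\rho(u/\lambda_i)\leqs 1$, convexity of $\vphi$ gives
\begin{equation*}
\int_\Omega \vphi\!\left(\frac{|u+v|}{\lambda_1+\lambda_2}\right)dx \leqs \frac{\lambda_1}{\lambda_1+\lambda_2}\rho(u/\lambda_1)+\frac{\lambda_2}{\lambda_1+\lambda_2}\rho(v/\lambda_2)\leqs 1,
\end{equation*}
hence $\|u+v\|\leqs\lambda_1+\lambda_2$ and the triangle inequality follows by taking infima. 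For the identification with $L^p$ when $\vphi(t)=t^p/p$, the defining inequality $\int (|w|/\lambda)^p/p\,dx\leqs 1$ is solved in closed form and yields a norm that agrees with $\|\cdot\|_{L^p}$ up to the universal factor $p^{-1/p}$.

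For Part (2), both implications reduce to scaling arguments that use $\Delta_2(\vphi)$ to absorb multiplicative constants into $\vphi$ itself. If $\|w_n-w\|\to 0$, pick $k_n\in\mathbb{N}$ with $\|w_n-w\|\leqs 2^{-k_n}$ and iterate the $\Delta_2$ bound $\vphi(2t)\leqs \Delta_2(\vphi)\vphi(t)$ to conclude that $\rho(w_n-w)\leqs \Delta_2(\vphi)^{-k_n}$, which tends to zero. Conversely, if $\rho(w_n-w)\to 0$, then for any fixed $\epsilon>0$ the same iteration bounds $\rho((w_n-w)/\epsilon)$ by $\Delta_2(\vphi)^{\lceil\log_2(1/\epsilon)\rceil}\rho(w_n-w)$, so the modular of $(w_n-w)/\epsilon$ eventually drops below $1$, giving $\|w_n-w\|\leqs\epsilon$.

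For Part (3), I would address the three properties in order. Completeness: given a Cauchy sequence in norm, the equivalence established in Part (2) makes it a modular Cauchy sequence, which is Cauchy in measure by a Chebyshev-type inequality via $\vphi$; pass to a subsequence converging a.e., and use Fatou on the modular to identify the limit and to conclude norm convergence. Separability: piecewise-constant functions on a countable family of rational partitions with rational values approximate indicators of measurable sets in modular, hence in norm, by the usual truncation and approximation argument, and the $\Delta_2$ condition removes the standard Orlicz pitfall that only bounded functions could otherwise be approximated. Reflexivity: this is the main obstacle. The cleanest route is to establish the duality $(L^\vphi)^*\cong L^{\vphi^*}$ via the Young inequality pairing $(w,g)\mapsto\int wg\,dx$ (Part (2), applied to $\vphi^*$, ensures its image is all of the dual), and then apply the duality twice, using $\Delta_2(\vphi^*)$ symmetrically, to obtain $(L^\vphi)^{**}\cong L^\vphi$. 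The hardest technical step is surjectivity of the canonical embedding into $L^{\vphi^*}$ as the dual; this requires the Radon--Nikodym theorem combined with the $\Delta_2(\vphi^*)$ growth bound to represent an arbitrary continuous linear functional by an $L^{\vphi^*}$ function, and it is at exactly this point that the hypothesis $\Delta_2(\vphi,\vphi^*)<\infty$ is indispensable rather than decorative.
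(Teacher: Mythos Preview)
Your proposal is essentially correct and far more detailed than the paper, which does not prove this theorem at all: the paper merely states it as a collection of classical facts and attributes them to Krasnosel'ski\u{\i}--Ruticki\u{\i} and Rao--Ren (the two standard references on Orlicz spaces). So there is no ``paper's own proof'' to compare against; your sketch is precisely the kind of argument one finds in those references.

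One small inaccuracy worth noting in Part~(2): in the direction ``norm convergence $\Rightarrow$ modular convergence'' you do not need the $\Delta_2$ bound on $\vphi$ at all. From $\|w_n-w\|\leqs 2^{-k_n}$ one gets $\rho\bigl(2^{k_n}(w_n-w)\bigr)\leqs 1$, and then plain convexity of $\vphi$ (with $\vphi(0)=0$) gives $\rho(w_n-w)\leqs 2^{-k_n}$, not $\Delta_2(\vphi)^{-k_n}$ as you wrote. The $\Delta_2(\vphi)$ hypothesis is genuinely needed only for the converse implication, exactly as you argued it. This is a cosmetic slip and does not affect the validity of your outline.
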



\begin{definition}
For an N-function $\vphi$ satisfying the $\Delta_{2}$ condition we define the Orlicz Sobolev space via
$$
W^{1, \varphi}(\Omega):=\left\{w \in L^{\varphi}(\Omega): \forall i \in\{1, \ldots, d\}, \frac{\partial}{\partial x_{i}} w \in L^{\vphi}(\Omega)\right\}.
$$
where $\frac{\partial}{\partial x_{i}} w$ denotes the weak derivative in the $i$-th direction.
\end{definition}

\begin{definition}
	For an N-function $\varphi$, we define its Simonenko indices via
	$$
	p^{-}:=\inf _{t>0} \frac{t \varphi^{\prime}(t)}{\varphi(t)}<\sup _{t>0} \frac{t \varphi^{\prime}(t)}{\varphi(t)}=: p^{+}.
	$$
	If $p^{+}<\infty$, $\varphi$ satisfies the $\Delta_{2}$ condition.
\end{definition}

\begin{theorem}
	Let $\varphi$ be an N-function with $p^{-}, p^{+} \in(1, \infty) .$ Then, the following statements are true:
	\begin{enumerate}[(1)]
	\item $W^{1, \varphi}(\Omega)$ is normed with $\|w\|_{W^{1, \varphi(\Omega)}}:=\|w\|_{L^{\varphi}(\Omega)}+\sum_{i=1}^{d}\left\|\frac{\partial}{\partial x_{i}} w\right\|_{L^{\varphi}(\Omega)}$.
	\item A sequence of functions $\left\{w_{n}\right\}$ in $W^{1, \varphi}(\Omega)$ converges to $w \in W^{1, \varphi}(\Omega)$ with respect to $\|\cdot\|_{W^{1, \varphi(\Omega)}}$, if and only if $\int_{\Omega} \varphi\left(\left|w_{n}-w\right|\right) d x \stackrel{n \rightarrow \infty}{\longrightarrow} 0$, and for all
	$i \in\{1, \ldots, d\}$ also $\int_{\Omega} \varphi\left(\left|\frac{\partial}{\partial x_{i}}\left(w_{n}-w\right)\right|\right) d x \stackrel{n \rightarrow \infty}{\longrightarrow} 0$.
	\item $W^{1, \varphi}(\Omega)$ is complete, separable and reflexive.
\end{enumerate}
\label{thm:orlicz}
\end{theorem}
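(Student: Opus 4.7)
The plan is to reduce every claim to the corresponding property of the scalar Orlicz space $L^{\varphi}(\Omega)$, which is already stated in the excerpt, via the isometric embedding $\iota: W^{1,\varphi}(\Omega) \to (L^{\varphi}(\Omega))^{d+1}$ defined by $\iota(w) = (w, \partial_1 w, \ldots, \partial_d w)$. First I would note that under the hypothesis $p^-,p^+ \in (1,\infty)$ both $\varphi$ and $\varphi^*$ satisfy the $\Delta_2$ condition, so that the excerpt's statements about $L^{\varphi}(\Omega)$ apply to $\varphi$ throughout.

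For part (1), the Luxemburg functional $\|\cdot\|_{L^{\varphi}(\Omega)}$ is already a norm on $L^{\varphi}(\Omega)$, so positive homogeneity, positive definiteness, and the triangle inequality transfer componentwise; the sum of norms is again a norm, giving the claim immediately.

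For part (2), I would use the equivalence between norm convergence in $L^{\varphi}(\Omega)$ and modular convergence $\int_\Omega \varphi(|w_n - w|) \to 0$, which holds under $\Delta_2(\varphi)$. Sketch of the forward direction: if $\|w_n\|_{L^\varphi} \to 0$, pick $n$ large enough that $\|w_n\|_{L^\varphi} \leqs 1$; then $\int \varphi(|w_n|) \leqs \int \varphi(|w_n|/\|w_n\|_{L^\varphi}) \leqs 1$, and iteratively applying the $\Delta_2$ inequality $\varphi(2t) \leqs \Delta_2(\varphi)\varphi(t)$ to shrink the argument yields decay to $0$. Conversely, if the modulars go to $0$, then for any $\lambda > 0$ we can bound $\int \varphi(|w_n|/\lambda)$ by a constant times $\int \varphi(|w_n|)$ by repeated halving/doubling using $\Delta_2$, hence $\|w_n\|_{L^\varphi} \leqs \lambda$ eventually. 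Part (2) then follows by applying this equivalence to each of the $d+1$ components $w_n - w$, $\partial_i(w_n - w)$ separately and summing.

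Part (3) I would handle by transferring the three properties through $\iota$. Completeness: if $\{w_n\}$ is Cauchy in $W^{1,\varphi}$, then $\{w_n\}$ and each $\{\partial_i w_n\}$ are Cauchy in $L^{\varphi}$; by completeness of $L^{\varphi}$ there exist limits $w, v_1,\ldots,v_d \in L^{\varphi}$. To identify $v_i = \partial_i w$ in the distributional sense, I would pass to the limit in $\int_\Omega w_n\,\partial_i \phi = -\int_\Omega (\partial_i w_n)\phi$ for $\phi \in C_c^\infty(\Omega)$, using the continuous embedding $L^{\varphi}(\Omega) \hookrightarrow L^1_{\mathrm{loc}}(\Omega)$ (which follows from the Young-type inequality of Lemma~\ref{lem:young}) to justify the convergence of both sides. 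Thus $w \in W^{1,\varphi}$ and $w_n \to w$, showing completeness and also that $\iota(W^{1,\varphi})$ is closed in $(L^{\varphi})^{d+1}$. Separability and reflexivity then descend automatically: finite products of separable (resp.\ reflexive) Banach spaces are separable (resp.\ reflexive), and subspaces of separable spaces are separable while closed subspaces of reflexive spaces are reflexive.

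The main obstacle is part (2), specifically the quantitative control needed to move between the modular $\int \varphi(|w_n|)$ and the Luxemburg norm $\|w_n\|_{L^\varphi}$ without reference to any a priori bound. The $\Delta_2$ condition is precisely what is needed to interpolate $\varphi(t/\lambda)$ for arbitrary $\lambda > 0$ against $\varphi(t)$, and setting this up carefully (as opposed to the Luxemburg norm being only lower semicontinuous with respect to the modular in general) is the crux; everything else is routine transfer of the scalar Orlicz results to the product setting.
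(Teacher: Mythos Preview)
The paper does not actually supply a proof of this theorem: it is stated in the appendix as background material, with the attribution ``according to \cite{krasnosel,rao2002applications}'' and no argument beyond that. So there is no paper proof to compare against; the statement is imported wholesale from the standard Orlicz-space literature.

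Your proposal is the standard textbook route (indeed, essentially the one found in those references): embed $W^{1,\varphi}$ isometrically into the product $(L^{\varphi})^{d+1}$ and pull back the Banach-space properties of the scalar Orlicz space componentwise. The argument is correct in outline and would go through. A couple of minor remarks: in part~(2), the forward implication (norm convergence $\Rightarrow$ modular convergence) does not actually require the $\Delta_2$ condition---convexity alone gives $\varphi(|w_n|) \leqs \|w_n\|_{L^\varphi}\,\varphi(|w_n|/\|w_n\|_{L^\varphi})$ when $\|w_n\|_{L^\varphi}\leqs 1$, hence $\int\varphi(|w_n|)\leqs \|w_n\|_{L^\varphi}$. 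It is only the reverse implication that genuinely uses $\Delta_2(\varphi)$, via the doubling estimate you sketch. Also, for the embedding $L^{\varphi}\hookrightarrow L^1_{\mathrm{loc}}$ used in the completeness step, the H\"older inequality for conjugate Orlicz pairs (using $\Delta_2(\varphi^*)<\infty$, which you correctly deduce from $p^->1$) is more direct than invoking Lemma~\ref{lem:young}, though either works. None of this affects the validity of the plan.
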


\begin{definition}
We define the homogeneous Orlicz Sobolev space as $W_0^{1, \varphi}(\Omega):=\overline{C_{0}^{\infty}(\Omega)}^{\|\cdot\|_{W^{1, \varphi}(\Omega)}}
$. Note that Poincar\'e's Inequality also holds for the homogeneous Orlicz Sobolev space.
\end{definition}

\begin{proposition}
	\label{lem:variation}
	Under Assumption \ref{asm:vphi}, $\vphi''(t)$ is a non-decreasing function, which implies that the Simonenko index of $\vphi$, $p^{-}\geqs 2$. Furthermore, $\J$ is differentiable and second order Gateaux differentiable. Its derivative and second order derivative are given in \eqref{eqn:firstvar} and \eqref{eqn:secondvar}, and well-defined.
\end{proposition}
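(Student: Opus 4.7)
The plan is to establish the four assertions in the order they are listed, then verify well-definedness at the end. The first claim that $\vphi''$ is non-decreasing is already Assumption \ref{asm:vphi}(2); the proof begins with it only to deduce that $p^-\geqs 2$. I would first show that $t\mapsto \vphi'(t)/t$ is non-decreasing: since $\vphi'(0)=0$ and $\vphi''$ is non-decreasing, $\vphi'(t)=\int_0^t \vphi''(s)\,ds\leqs t\vphi''(t)$, so $(\vphi'(t)/t)'=(t\vphi''(t)-\vphi'(t))/t^2\geqs 0$. Integrating the pointwise inequality $\vphi'(s)/s\leqs \vphi'(t)/t$ for $s\in(0,t]$ yields $\vphi(t)=\int_0^t \vphi'(s)\,ds \leqs (\vphi'(t)/t)\int_0^t s\,ds = t\vphi'(t)/2$, from which $t\vphi'(t)/\vphi(t)\geqs 2$ and hence $p^-\geqs 2$.

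Next I would derive \eqref{eqn:firstvar}. For $u,v\in V$ and $|t|\leqs 1$, the difference quotient $g_t(x):=(\vphi(|\nabla u+t\nabla v|)-\vphi(|\nabla u|))/t$ converges pointwise to $\vphi'(|\nabla u|)(\nabla u\cdot \nabla v)/|\nabla u|$ (interpreted as $0$ on $\{|\nabla u|=0\}$ since $\vphi'(0)=0$ and $\vphi'(t)/t$ is bounded near $0$ by $\vphi''(0)$). The mean value theorem with monotonicity of $\vphi'$ gives $|g_t(x)|\leqs \vphi'(|\nabla u|+|\nabla v|)|\nabla v|$, and by the Young inequality of Lemma \ref{lem:young} together with $\vphi^*(\vphi'(s))\sim \vphi(s)$ and $\Delta_2(\vphi)<\infty$, this dominating function is in $L^1(\Omega)$. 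Dominated convergence then interchanges limit and integral and, combined with differentiation of $\int fu$, gives \eqref{eqn:firstvar}. The same Young bound shows each term of \eqref{eqn:firstvar} is separately finite.

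For the second variation I would compute $\partial_s\partial_t \vphi(|\nabla u+t\nabla v+s\nabla w|)|_{s=t=0}$ by chain rule, yielding exactly the two terms of \eqref{eqn:secondvar}. Again I need to justify bringing the derivative inside the integral; this is done by bounding the relevant difference quotient in absolute value by a constant multiple of $\vphi''(|\nabla u|+|\nabla v|+|\nabla w|)(|\nabla v|+|\nabla w|)^2$ via the identity $\vphi'(t)/t=\int_0^1 \vphi''(\sigma t)\,d\sigma$ and monotonicity of $\vphi''$, then applying dominated convergence. To see the resulting second-variation integrand is $L^1$, note that the singular-looking coefficient $(\vphi''(|\nabla u|)|\nabla u|-\vphi'(|\nabla u|))/|\nabla u|^3$ is paired with $(\nabla u\cdot\nabla v)(\nabla u\cdot\nabla w)$, which supplies the matching factor $|\nabla u|^2$; using non-decreasing $\vphi''$ and Assumption \ref{asm:vphi}(1), both terms of \eqref{eqn:secondvar} are bounded pointwise by $C\kappa(x)\vphi''(|\nabla u|)|\nabla v||\nabla w|$. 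Invoking Lemma \ref{lem:vphi2} together with the shifted-N-function estimate $\vphi_{|\nabla u|}(|\nabla v|)\leqs C(\vphi(|\nabla u|)+\vphi(|\nabla v|))$, and Cauchy--Schwarz-type splitting $|\nabla v||\nabla w|\leqs \tfrac12(|\nabla v|^2+|\nabla w|^2)$, gives an $L^1$ bound in terms of $\vphi(|\nabla u|)$, $\vphi(|\nabla v|)$, $\vphi(|\nabla w|)$, which are integrable since $u,v,w\in V=W_0^{1,\vphi}$.

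The main obstacle I anticipate is the apparent singularity on the set $\{|\nabla u|=0\}$ in \eqref{eqn:secondvar}: the coefficient $(\vphi''(|\nabla u|)|\nabla u|-\vphi'(|\nabla u|))/|\nabla u|^3$ looks divergent. The resolution is the cancellation described above together with the Taylor-type estimate $\vphi''(t)t-\vphi'(t)=\int_0^t(\vphi''(t)-\vphi''(s))\,ds\leqs t(\vphi''(t)-\vphi''(0))$, which shows the coefficient times $|\nabla u|^2$ is uniformly controlled by $\vphi''(|\nabla u|)$; once this observation is in place, the remainder of the argument is routine dominated convergence and Orlicz--Young machinery, all of whose hypotheses are supplied by Assumption \ref{asm:vphi} and $\Delta_2(\vphi,\vphi^*)<\infty$.
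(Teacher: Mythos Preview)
The paper states this proposition in the appendix without any accompanying proof, so there is nothing to compare your argument against. Your proposal supplies what the paper omits, and the approach is sound: the derivation of $p^{-}\geqs 2$ from monotonicity of $\vphi''$ via the chain $\vphi'(t)\leqs t\vphi''(t)\Rightarrow \vphi'(t)/t$ non-decreasing $\Rightarrow \vphi(t)\leqs t\vphi'(t)/2$ is clean and correct. Your treatment of \eqref{eqn:firstvar} and \eqref{eqn:secondvar} by dominated convergence with Orlicz--Young bounds is the natural route, and your handling of the apparent singularity on $\{|\nabla u|=0\}$ in \eqref{eqn:secondvar} --- exploiting the cancellation with $(\nabla u\cdot\nabla v)(\nabla u\cdot\nabla w)$ together with the integral representation $\vphi'(t)/t=\int_0^1\vphi''(\sigma t)\,d\sigma$ and the Taylor-type estimate $\vphi''(t)t-\vphi'(t)=\int_0^t(\vphi''(t)-\vphi''(s))\,ds$ --- is exactly the right observation. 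The argument stands on its own.
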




	

\subsection{Proof of Lemma \ref{lem:bdist}}
\label{sec:app:prf:bdist}

\begin{lemma}\label{lem:uv}
	For any $u$, $v$, and $s\in[0,1]$, we have
	\begin{equation}
	s/2(|\nabla v|+|\nabla (u-v)|)\leqs |\nabla(v+s(u-v))|+|\nabla v|\leqs 2(|\nabla v|+|\nabla (u-v)|).
	\end{equation}
	In particular, for $s=1$, we have
	\begin{equation}
	1/2(|\nabla v|+|\nabla (u-v)|)\leqs |\nabla v|+|\nabla u|\leqs 2(|\nabla v|+|\nabla (u-v)|).
	\end{equation}
\end{lemma}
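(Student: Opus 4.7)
The plan is to reduce the lemma to elementary scalar triangle inequalities by introducing the abbreviations $A := |\nabla v|$ and $B := |\nabla(u-v)|$. Once we pass to $A, B$, the gradient structure plays no further role and both bounds follow from the ordinary triangle inequality together with a one-line case analysis.

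For the upper bound I would simply use the forward triangle inequality applied to $\nabla(v + s(u-v)) = \nabla v + s\,\nabla(u-v)$, giving $|\nabla(v+s(u-v))| \leqs A + sB$. Adding $|\nabla v| = A$ and using $s \leqs 1$ yields
\[
|\nabla(v+s(u-v))| + |\nabla v| \leqs 2A + sB \leqs 2(A+B),
\]
which is the claimed right-hand side.

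For the lower bound, the reverse triangle inequality gives $|\nabla(v+s(u-v))| \geqs |sB - A|$, so
\[
|\nabla(v+s(u-v))| + |\nabla v| \geqs |sB - A| + A \geqs \max(sB,\, A).
\]
Thus it suffices to prove the purely scalar fact $\max(sB, A) \geqs \tfrac{s}{2}(A+B)$ for all $s \in [0,1]$ and $A,B \geqs 0$. I would handle this by a two-line case split: if $A \geqs sB$, then $\max = A$ and we need $A(1-s/2) \geqs sB/2$, which holds because $1 - s/2 \geqs 1/2$ and $sB/2 \leqs A/2$; if $A < sB$, then $\max = sB$, and since $s \leqs 1$ we have $A < B$, so $sB \geqs sA/2 + sB/2$. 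The case $s=1$ in the statement is then immediate by specialization.

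There is no real obstacle here — the argument is entirely elementary. The only place requiring care is the choice of the case split ($A$ versus $sB$) that makes the constant $s/2$ sharp; one might initially try the symmetric split $A$ versus $B$, which gives a weaker constant. Apart from that, the lemma is a bookkeeping exercise in the triangle and reverse triangle inequalities.
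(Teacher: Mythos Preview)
Your argument is correct. The paper actually states this lemma as an auxiliary result inside the appendix section proving Lemma~\ref{lem:bdist}, but does not supply a separate proof for it; the \texttt{proof} environment that immediately follows in the paper is the proof of Lemma~\ref{lem:bdist}, which \emph{uses} this inequality rather than establishing it. So there is no ``paper's proof'' to compare against --- the authors evidently regard the bound as elementary enough to omit.

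Your route via $A=|\nabla v|$, $B=|\nabla(u-v)|$, the triangle/reverse triangle inequalities, and the case split on $A\gtrless sB$ is exactly the kind of direct scalar argument one expects here, and it recovers the sharp constant $s/2$. One cosmetic remark: in Case~2 you could avoid the intermediate ``$A<B$'' step entirely by noting that $sB \geqs \tfrac{s}{2}(A+B)$ is equivalent to $sB \geqs sA$, which follows immediately from $sB > A \geqs sA$ (since $s\leqs 1$). But this is a matter of taste, not a gap.
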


\begin{proof}
	Assumption \ref{asm:vphi} implies that $\vphi(t) \sim t\vphi'(t) \sim t^2\vphi''(t)$, and by the $\Delta_2(\vphi)$ condition, we also have $\vphi(2t) \leqs c \vphi(t)$. Therefore, it holds true that $4t^2 \vphi''(2t)\leqs c t^2\vphi''(t)$ uniformly for $t\geqs 0$.
    Denoting $w=u-v$, we have
	\begin{equation}
		\begin{aligned}
			\bdist{u}{v}&=\J(u)-\J(v)-\J^{\prime}(v)(u-v)\\
					  &=\int_{0}^{1}\J^{\prime}(v+ sw)w-\J^{\prime}(v)w \ds \\
					  &=\int_{0}^{1} \frac{1}{s} (\J^{\prime}(v+s w)-\J^{\prime}(v))(s w)\ds \\
					  &\geqs c\int_{0}^1 s\int \kappa(x)\vphi''(|\nabla v+s\nabla w|+|\nabla v|)|\nabla w|^2\dx \ds \\
					  &\geqs c\int \kappa(x)\vphi''(|\nabla v| + |\nabla w|)|\nabla u-\nabla v|^2\dx,
		\end{aligned}.
	\end{equation}
	and 
	\begin{equation}
		\begin{aligned}
			\bdist{u}{v}&=\J(u)-\J(v)-\J^{\prime}(v)(u-v)\\
					  &=\int_{0}^{1}\J^{\prime}(v+ sw)w-\J^{\prime}(v)w \ds \\
					  &\leqs\int_{0}^{1} \int \kappa(x)\vphi''(|\nabla v+s\nabla w|+|\nabla v|)s|\nabla w|^2\ds \\
					  &\leqs C\int \kappa(x)\vphi''(|\nabla v|+|\nabla w|)|\nabla u-\nabla v|^2\dx.
		\end{aligned}.
	\end{equation}

\end{proof}

 \subsection{Proof of Lemma \ref{lem:regularization}}
 \label{sec:app:regulairzation}
 
 \begin{proof}
 It is clear that we regularize when $|\nabla u|$ vanishes or becomes large. 
 
 For $u \in W_{0}^{1, p}(\Omega)$, we define $\Omega_{k}(u):=\left\{x \in \Omega:|\nabla u(x)|>k\right\}$. By following inequalities
 $$
 	\left|\Omega_{k}(u)\right| :=\int_{\Omega_{k}(u)} 1  \leqs \frac{1}{k} \int_{\Omega_{k}(u)}|\nabla u|  
 	 \leqs \frac{1}{k}\left|\Omega_{k}(u)\right|^{\frac{1}{q}}\left(\int_{\Omega}|\nabla u|^{p} d\right)^{\frac{1}{p}}=\frac{\|u\|_{W_{0}^{1, p}(\Omega)}}{k}\left|\Omega_{k}(u)\right|^{\frac{p-1}{p}},
 $$
 we have $\left|\Omega_{k}(u)\right|   \leq\|u\|_{W_{0}^{1, p}(\Omega)}^{p} k^{-p}$, for all $u \in W_{0}^{1, p}(\Omega)$. Similarly, we have
 \begin{equation}
 	\left|\Omega_{k}(u)\right|   \leqs \frac{1}{k^p}  \left(\int_{ \Omega_{k}(u)}a_{\epsilon}(x, \nabla u) \cdot \nabla u\right), \quad \forall u \in W_{0}^{1,2}(\Omega),
 	\label{ineq:sigularity measure}
 \end{equation}
 where $a_{\epsilon}(x, \nabla u)=\vphi_{\epsilon_k}'(|\nabla u|)\nabla u/|\nabla u|$.
 Therefore, $	\left|\Omega_{k}(u)\right|  \rightarrow 0$, as $\epsilon_{+,k} \rightarrow \infty$.
		
 Step 1: \emph{The sequence $u_{\epsilon_k}$ is bounded in  $H^{1}_0$}.
 
 Taking $v=u_{\epsilon_k}$ in \begin{equation}
 	\int_{\Omega} a_{\epsilon_k}(x, \nabla u_{\epsilon_k}) \cdot \nabla v =(f,v), \quad \forall  v \in H_0^1,
 	\label{eqn:reginek}
 \end{equation}

 and combining the fact that 
 \begin{equation}
 	\begin{aligned}
 		\|\nabla u_{\epsilon_k}\|_{L^{2}\left(\Omega \backslash \Omega_{k}(u_{\epsilon_k})\right)^{d}} & \leq|\Omega|^{\frac{p-2}{2 p}}\|\nabla u_{\epsilon_k}\|_{L^{p}\left(\Omega \backslash \Omega_{k}(u_{\epsilon_k})\right)^{d}} \\
 		& \leqs |\Omega|^{\frac{p-2}{2 p}}\left(\int_{\Omega \backslash \Omega_{k}(u)}a_{\epsilon}(x, \nabla u_{\epsilon_k}) \cdot \nabla u_{\epsilon_k}\right)^{\frac{1}{p}},
 	\end{aligned}
 \end{equation} 
 and 
 \begin{equation}
 	\|\nabla u_{\epsilon_k}\|_{L^{2}\left(\Omega_{k}(u_{\epsilon_k})\right)^{N}} \leqs |\Omega|^{\frac{p-2}{2 p}}\left(\int_{  \Omega_{k}(u_{\epsilon_k})}a_{\epsilon}(x, \nabla u_{\epsilon_k}) \cdot \nabla u_{\epsilon_k}\right)^{\frac{1}{2}},
 \end{equation}

 we have
 \begin{equation}
 	\begin{aligned}
 		\int_{\Omega} a_{\epsilon_k}(x, \nabla u_{\epsilon_k}) \cdot \nabla  u_{\epsilon_k} &=(f, u_{\epsilon_k}) \leqs \|f\|_{L^2}\|\nabla u_{\epsilon_k}\|_{L^{2}} \\
 		& \leqs C \|f\|_{L^2}\left(\int_{\Omega} a_{\epsilon_k}(x, \nabla u_{\epsilon_k}) \cdot \nabla  u_{\epsilon_k}\right)^{1/2}.
 	\end{aligned}
 \end{equation}
 Therefore, $\int_{\Omega} a_{\epsilon_k}(x, \nabla u_{\epsilon_k})\cdot \nabla u_{\epsilon_k}$ (hence $\|u_{\epsilon_k}\|_{H_0^1}$) is also uniformly bounded. 
		
 Step 2 : \emph{(Lemma 5.1 in Casas \cite{casas2016approximation}) weak limit $u$ of $u_{\epsilon_k}$ belongs to $W^{1,p}$}.

 Since $\{u_{\epsilon_k}\}$ is uniformly bounded in $H^1_0$, we can extract a subsequence which converge weakly to $u\in H^1_0$. Without loss of generality, we assume the subsequence is $\{u_{\epsilon_k}\}$ itself.


 We fix an index $k \in \mathrm{N}$ and associate it with the set: $B_{k}:=\bigcup_{j=k}^{\infty} \Omega_{j} \left(u_{\epsilon_j}\right)$, where $\Omega_{j}\left(u_{\epsilon_j}\right):=\left\{x \in \Omega:\left|\nabla u_{\epsilon_j}(x)\right|>j\right\}$.

 By the estimate \eqref{ineq:sigularity measure} and uniformly boundedness of $\int_{\Omega} a_{\epsilon_k}(x, \nabla u_{\epsilon_k})\cdot \nabla u_{\epsilon_k}$, we see that
 $$
 		\left|B_{k}\right|  \leqs   C \sum_{j=k}^{\infty} \frac{1}{j^{p}}<+\infty,
 $$
 and, therefore $\lim _{k \rightarrow \infty}\left|B_{k}\right|=0$. We also have
 $$
 \begin{aligned}
 	\int_{\Omega \backslash B_{k}}\kappa(x)\left|\nabla u_{\epsilon_{j}}\right|^{p} d x 
 	& \leqs \int_{\Omega} a_{\epsilon_j}(x, \nabla u_{\epsilon_j})\cdot \nabla  u_{\epsilon_j}  \leqs C, \, \forall j\geqs k,
 \end{aligned}
 $$
 hence $\left\{\nabla u_{\epsilon_{j} }\right\}$ is bounded in $L^{p}\left(\Omega \backslash B_{k}\right)^{N}$. Since $\nabla u_{\epsilon_{j}} \rightharpoonup \nabla u$ in $L^{2}(\Omega)^{d}$, we have $\nabla u_{\epsilon_{j}} \rightharpoonup \nabla u$ in $L^{p}(\Omega\backslash{B_k})^{d}$.  Hence,
 \begin{equation}
 \begin{aligned}
 	\int_{\Omega} \frac{1}{p}\kappa(x)|\nabla u|^p & = \lim_{k\rightarrow \infty} \int_{\Omega \backslash B_{k}}\frac{1}{p}\kappa(x)|\nabla u|^p \leqs \lim _{k \rightarrow \infty} \liminf _{\substack{j \rightarrow \infty \\ j\geqs k}} \int_{\Omega \backslash B_{k}} \frac{1}{p}\kappa(x)\left|\nabla u_{\epsilon_{j}}\right|^{p}\\
 	& \leqs \liminf _{j \rightarrow \infty }\int_{\Omega\backslash \Omega_{j}} \kappa(x)\vphi_{\epsilon_{j}} (|\nabla u_{\epsilon_{j}}|) \leqs\liminf _{j \rightarrow \infty }\int_{\Omega} \kappa(x)\vphi_{\epsilon_{j}} (|\nabla u_{\epsilon_{j}}|). 
 \end{aligned}
 \end{equation}

 Moreover, 
 \begin{equation}
 	\J(u)=\int_{\Omega}  \frac{1}{p} \kappa(x) |\nabla u|^p - f u \leqs \liminf _{j \rightarrow \infty }\int_{\Omega}\kappa(x)\vphi_{\epsilon_{j}} (|\nabla u_{\epsilon_{j}}|) -  fu_{\epsilon_{j}} = \liminf _{j \rightarrow \infty }\J_{\epsilon}(u_{\epsilon_{j}}).
 \end{equation}

 Let $u^*$ be the minimizer of $\J$ in $W^{1,p}$, we have 
 \begin{equation}
 	\J(u^*) \leqs \J(u) \leqs \liminf _{j \rightarrow \infty }\J_{\epsilon}(u_{\epsilon_{j}}).
 	\label{ineq:upperlimit}
 \end{equation}
		
 Using the definition \eqref{eqn:regvphi} or \eqref{eqn:regvphi2} of $\vphi_{\epsilon}$, for any $u\in W_0^{1,p}$, we have
 \begin{displaymath}
 	\int_{\Omega} \kappa(x)\vphi_{\epsilon_{j}} (|\nabla u|) \leqs \int_{\Omega} \kappa(x) |\nabla u|^p + C \epsilon_{j-}^p.
 \end{displaymath}
 
 Thus, $\J_{\epsilon}(u_{\epsilon_{j}}) \leqs \J(u^*) + C \epsilon_{j-}^p$. Let $j\rightarrow \infty$,
 \begin{equation}
 \liminf _{j \rightarrow \infty }\J_{\epsilon}(u_{\epsilon_{j}}) \leqs \J(u^*).
 \label{ineq:lowerlimit}
 \end{equation}
 
 Combining \eqref{ineq:upperlimit} and \eqref{ineq:lowerlimit} we have
 \begin{equation}
 \J(u^*)  = \J(u) = \liminf _{j \rightarrow \infty, }\J_{\epsilon}(u_{\epsilon_{j}}),
 \end{equation}
 which implies $u_{\epsilon_{j}} \rightharpoonup  u^*$ in $H^1_0$ and  $\nabla u_{\epsilon_{j}} \rightharpoonup \nabla u^*$ in $L^{p}(\Omega\backslash{B_k})^{d}$. Furthermore, we have
 \begin{equation}
	\lim_{k \rightarrow \infty}\int_{\Omega}\kappa(x) \vphi_{\epsilon_{k}}(|\nabla u_{\epsilon_{k}}|) = \lim_{k \rightarrow \infty}\int_{\Omega\backslash \Omega_{k}}\kappa(x) \vphi_{\epsilon_{k}}(|\nabla u_{\epsilon_{k}}|) = \int_{\Omega}\kappa(x) |\nabla u|^p ,
\end{equation}
and,
\begin{align}
  	&\lim_{k \rightarrow \infty}\int_{\Omega}\kappa(x) \chi_{\Omega \backslash \Omega_{k}}|\nabla u_{\epsilon_k }|^p= \int_{\Omega}\kappa(x) |\nabla u|^p , \label{eqn:normCov}\\
  	&\lim_{k \rightarrow \infty}\int_{\Omega_k}\kappa(x) |\nabla u_{\epsilon_k }|^2 = 0,
  	\label{eqn:normCov2}
\end{align} 	
since $\vphi_{\epsilon_{k}}(|\nabla u_{\epsilon_{k}}|) = |\nabla u_{\epsilon_k }|^p $ for $x\in\Omega \backslash \Omega_{k}$. The weak convergence $\nabla u_{\epsilon_{j}} \rightharpoonup \nabla u$ in $L^{p}(\Omega\backslash{B_k})^{d}$ and the norm convergence in  \eqref{eqn:normCov} imply the strong convergence in $L^{p}(\Omega\backslash{B_k})^{d}$. Combining this and \eqref{eqn:normCov2} we also have strong convergence in $H^1_0(\Omega)$. 
 \end{proof}

%
%
%

\subsection{Proof of Lemma \ref{lem:quasinorm} and \ref{lem:quasinorm-reg}}
\label{sec:app:existence-wn}

\subsubsection{Proof of Lemma \ref{lem:quasinorm}}
\begin{proof}	
We first show the existence and uniqueness of $\wn$ defined by \eqref{eqn:direction-wn}.	
Let $\phi(t)= \int_0^t \vphi''(|\nabla \un|+\tau)\tau d\tau$. It is straightforward to show that $\phi(t)$ is an N-function and strictly convex under the assumption that $\vphi''$ is non-decreasing. 
Since the Simonenko indices of $\vphi$ $p_{\vphi}^{-}>1$ and $p_{\vphi}^{+}<\infty$, it suffice to show that Simonenko indices of $\phi$, $p_{\phi}^{-}>1$ and $p_{\phi}^{+}<\infty$. If $|\nabla \un|=0$, we can use the relation $\vphi'(t)\sim \vphi''(t)t$. Therefore, we only need to verify the case $|\nabla \un|\neq 0$. It is obvious that $\vphi''(|\nabla \un|+t) >0$ since $\vphi(t)$ is strictly convex. By
$$
p_{\phi}(t):=\frac{t \phi^{\prime}(t)}{\phi(t)}= \frac{\vphi''(|\nabla \un|+t)t^2}{\int_0^t \vphi''(|\nabla \un|+\tau)\tau d\tau},
$$
it implies that $\lim_{t\rightarrow0} p_{\phi}(t) = 2 $. We have, $\phi(t)=\int_0^t \vphi''(|\nabla \un|+\tau)\tau d\tau\geqs \int_0^t \vphi''(\tau)\tau d\tau \geqs c \vphi(t)$, $\vphi''(|\nabla \un|+t)t^2 \leqs \vphi(|\nabla \un|+t)$, and $\vphi(|\nabla \un|+t) \leqs 2^{p^{+}}\vphi(t)$, as $t\geq|\nabla \un|$. Therefore, $p_{\phi}^+(t)\leqs 2^{p^{+}}$, as $t\geq|\nabla \un|$. Since $p_{\phi}(t)$ is continuous in $t$,  $p^{+}_{\phi}(t) < \infty$ for $0 < t \leqs \infty $. Also, the non-decreasing property of $\vphi''$ implies that $p^{-}_{\phi}(t) >1$. 

By Theorem \ref{thm:orlicz}, we deduce that $\Delta_{2}({\phi,\phi^*})\leqs \infty$, $W^{1,\phi}$ is a separable and reflexive Banach space. Moreover, since $\phi(t)\geqs \vphi(t)$, we have $\|w\|_{W_{1,\phi}}\geqs \|w\|_{W_{1,\vphi}}$, for any $w\in W_{1,\phi}$. By strict convexity of $F(w):=\int_{\Omega} \phi(|\nabla(w)|)+ \J^{\prime}(\un)(w)$ for $w\in W^{1,\phi}$, there exists a unique minimizer $\wn\in W^{1,\phi}$ satisfying the equation  \eqref{eqn:direction-wn}, which further implies $\wn \in W^{1,\vphi}$.

A combination of \eqref{eqn:direction-wn}, Assumption \ref{asm:vphi}, Lemma \ref{lem:bdist}, Young type inequality Lemma \ref{lem:young},  Lemma \ref{lem:ab}, and inequality \eqref{eqn:young} lead to the following inequalities,
\begin{equation}
    \begin{aligned}
        \J\left(\un\right)-\J(u) & \leqs \J^{\prime}(\un)\left(\un-u\right) \\
           & = -C_q \int_{\Omega}\kappa(x)\vphi''\left(|\nabla \un|+|\nabla \wn|\right)\nabla \wn\cdot \nabla (\un-u)  \\
                & \lesssim C_q \int_{\Omega}\kappa(x)\vphi'_{|\nabla \un|}\left(|\nabla \wn|\right) \left|\nabla \un-\nabla u\right|  \\ 
                & \leqs C_q \int_{\Omega} C_\delta \kappa(x)\vphi_{|\nabla \un|}(|\nabla \wn|) + \delta \kappa(x)\vphi_{|\nabla \un|}(|\nabla (\un - u)|)\\
                & \leqs C_q \int_{\Omega} C_\delta \kappa(x) \vphi_{|\nabla \un|}(|\nabla \wn|) + CC_q \delta \kappa(x)\vphi_{|\nabla u|}(|\nabla (\un - u)|).
    \end{aligned}
    \label{ineq:qusi-upperbound}
\end{equation}
    
By the fact $\J'(u) = 0$, Lemma \ref{lem:bdist}, and Lemma \ref{lem:nfunction}, we have
\begin{equation}
    \J(\un) - \J(u) \geqs \cj \int_\Omega \kappa(x)\vphi_{|\nabla u|}(|\nabla (\un - u)|).
    \label{ineq:qusi-lowerbound}
\end{equation}

Plugging \eqref{ineq:qusi-lowerbound} to the right hand side of \eqref{ineq:qusi-upperbound} and taking an appropriate $\delta>0$,  we derive the following inequality 
\begin{equation}
    \J\left(\un\right)-\J(u)\leqs C_1\int_\Omega \kappa(x) \vphi_{|\nabla \un |}(|\nabla \wn|),
    \label{eqn:en}
\end{equation}
which leads to the first inequality \eqref{eqn:upperbound-qn} by applying Lemma \ref{lem:nfunction} again, and the constant $C_1$ only depends on $\Dphi$. 

The second inequality \eqref{eqn:lowerbound-qn} comes directly from a combination of \eqref{eqn:upaw} and \eqref{eqn:direction-wn}.
\end{proof}

\subsubsection{Proof of Lemma \ref{lem:quasinorm-reg}}

\begin{proof}

By Lemma \ref{lem:bdist}, \eqref{eqn:direction-wn-reg} and     \eqref{ineq:upperBoundQuasinorm}, we have 
\begin{displaymath}
\begin{aligned}
    \J(\un+\wn)-\J(\un) &\leqs  \J^{\prime}(\un)(\wn)+ \Cj \int_{\Omega}\kappa(x)\vphi''\left(|\nabla \un|+|\nabla \wn|\right)|\nabla \wn|^2\\
    &= - C_n A[\un](\wn, \wn) +  \Cj \int_{\Omega}\kappa(x)\vphi''\left(|\nabla \un|+|\nabla \wn|\right)|\nabla \wn|^2\\
    &\leqs - \int_{\Omega}\kappa(x)\vphi''\left(|\nabla \un|+|\nabla \wn|\right)|\nabla \wn|^2.
\end{aligned}
\label{eqn:upaw2}
\end{displaymath}	

Similar to the above proof of Lemma \ref{lem:quasinorm}, we have
\begin{equation}
    \begin{aligned}
        \J\left(\un\right)-\J(u) & \leqs \J^{\prime}(\un)\left(\un-u\right) \\
        & =-C_n A[\un](\wn, \un-u) \\
        & \leqs C_n\int_{\Omega}\kappa(x)\vphi''\left(|\nabla \un|+|\nabla \wn|\right)|\nabla \wn|\cdot |\nabla (\un-u)|  \\
        & \lesssim C_n\int_{\Omega}\kappa(x)\vphi'_{|\nabla \un|}\left(|\nabla \wn|\right)  |\nabla (\un-u)| \\ 
        & \leqs \int_{\Omega} C_n C_\delta \kappa(x)\vphi_{|\nabla \un|}(|\nabla \wn|) + \int_{\Omega}C_n \delta \kappa(x) \vphi_{|\nabla \un|}(|\nabla (\un - u)|)\\
        & \lesssim  C_n C_\delta  \int_{\Omega}\kappa(x)\vphi_{|\nabla \un|}(|\nabla \wn|) + C_n \delta\int_{\Omega} \kappa(x)\vphi_{|\nabla u|}(|\nabla (\un - u)|).
    \end{aligned}
    \label{ineq:linear-upperbound}
\end{equation}

Plugging \eqref{ineq:qusi-lowerbound} to the right hand side of \eqref{ineq:linear-upperbound}  and taking an appropriate $\delta>0$, we derive the following inequality 
\begin{equation}
\J\left(\un\right)-\J(u)\leqs C_2\int_\Omega \kappa(x) \vphi_{|\nabla \un |}(|\nabla \wn|),
\end{equation}
where $C_2$ only depending on $M_C$ and $\Delta_2(\vphi,\vphi^*)$.
It leads to the first inequality \eqref{eqn:upperbound-qn-reg} by applying  Lemma \ref{lem:nfunction} again. And, the second inequality \eqref{eqn:upperbound-nstep-qn-reg} can be proved similarly as in the proof of Lemma \ref{lem:quasinorm} by \eqref{eqn:upaw},\eqref{eqn:direction-wn-reg} and Assumption \ref{asm:cn} .
\end{proof}

\end{document}